\title[Rigorous enclosure of Lyapunov exponent for Gaussian R.D.S.]{ Efficient computation of stationary measures and the Lyapunov Landscape for families of random dynamical systems with smooth additive noise }
\author{S. Galatolo, C. Lopez Vereau, L. Marangio, I. Nisoli}
\newtheorem{definition}{Definition}[section]
\newtheorem{theorem}[definition]{Theorem}
\newtheorem{proposition}[definition]{Proposition}
\newtheorem{lemma}[definition]{Lemma}
\newtheorem{corollary}[definition]{Corollary}
\newtheorem{remark}[definition]{Remark}
\newtheorem*{main_application}{Main result 2}
\newtheorem*{main_computability}{Main result 1}
\begin{document}

\maketitle

\section*{Abstract}

We present an efficient and validated method for approximating the stationary measures of random dynamical systems with smooth additive noise. The approach leverages the strong regularizing properties of the associated transfer operator through a finite-dimensional reduction based on Fourier approximation. Explicit error bounds make the method suitable for use in computer-assisted proofs and rigorous numerical investigations; in particular, its efficiency {\em enables systematic exploration of parameter space}.

The method provides access to the stationary measure and supports the analysis of key statistical properties of the system. As an application, we study noise-induced phenomena, focusing on the transition from positive to negative Lyapunov exponent—one of the telltale signs of a physical phenomenon known as \emph{Noise Induced Order}—in families of random unimodal maps with Gaussian additive noise.

By analyzing the Lyapunov exponent as a function of the system parameters, we identify transitions along a hypersurface in parameter space. The parameters we consider include the standard deviation (intensity) of the Gaussian noise and the shape of the unimodal map.

\section{Introduction}

In random dynamical systems, \emph{Noise Induced Order} (NIO) refers to a set of phenomena in which increasing the noise intensity can lead to the emergence of more regular dynamical behavior. Typical manifestations include:
\begin{itemize}
	\item sharpening of the power spectrum,
	\item abrupt decrease in entropy,
	\item appearance of a negative Lyapunov exponent,
	\item spatial localization of orbits.
\end{itemize}
In this paper we focus on the transition from positive to negative Lyapunov exponent as the noise intensity increases. This particular signature of NIO was first observed numerically in 1983 in the Matsumoto--Tsuda model for the Belousov--Zhabotinsky chemical reaction~\cite{MT83}. It illustrates a striking effect: the system appears chaotic for very small noise intensity, but as the noise increases, it exhibits increasingly regular behavior, eventually crossing into a regime with negative Lyapunov exponent. Similar qualitative transitions have been reported for other indicators of chaos in both low- and high-dimensional systems.

The numerical observations of Matsumoto--Tsuda \cite{MT83} triggered a large interdisciplinary literature (over 450 citations at the time of writing), with extensive numerical and applied investigations of noise-driven stabilization and related transitions.
At the same time, the subset of this literature that provides \emph{mathematical} support directly for the Matsumoto--Tsuda noise-induced order mechanism (as reflected, for instance, in MathSciNet-indexed work) is comparatively small.
This point is explicitly noted by Sumi, who observes that many physicists investigated the phenomenon numerically, ``although there has been only a few mathematical supports for it'' \cite{Sumi2011}.

In our context, by ``rigorous proof'' we mean a validated chaos-to-order transition for the original Matsumoto--Tsuda setting, in particular a certified sign change of the Lyapunov exponent with explicit error bounds.
Such a computer-assisted result appeared only recently \cite{G2020}.

The comparatively recent appearance of rigorous proofs is not merely historical: the original Matsumoto--Tsuda and Lasota--Mackey models pose genuine obstacles to a purely \emph{a priori} analysis. They fall outside the most convenient uniformly expanding / bounded-distortion paradigms, and the noise-induced order mechanism is tied to parameter regimes where non-uniform behavior and limited regularity prevent straightforward \emph{a priori} control.
The results in \cite{G2020,S2022} become possible via an \emph{a posteriori}, computer-assisted approach.
The most delicate step is to prove, with certification, that an appropriate finite-rank discretization has a spectral gap (equivalently, explicit mixing bounds on the zero-average subspace).
Coarse--fine inequalities then allow one to lift these discrete estimates to explicit bounds for the annealed transfer operator itself, yielding rigorous error control for the stationary density and the associated Lyapunov exponent. Our approach follows this philosophy, providing a validated Fourier--Galerkin framework for smooth Gaussian noise with explicit constants enabling systematic, error-controlled exploration of parameter space.

Further validated developments include multiple sign transitions in the Lasota--Mackey map \cite{S2022}, related behavior for unimodal maps with bounded-variation noise \cite{Nisoli2022}, and extensions to random skew-products over non-uniformly expanding bases \cite{BlumenthalNisoli2022}.
More recently, analogous phenomena have been investigated in high-dimensional systems with structured stochastic dynamics \cite{chen2024noiseinducedorderhighdimensions}. Alongside Noise Induced Order, other noise-induced phenomena such as \emph{Noise Induced Chaos} have been studied in the applied literature \cite{Sato2009}. These phenomena are strongly tied to the statistical properties of typical trajectories--e.g., the sign of the Lyapunov exponent in one-dimensional maps depends on whether expansion or contraction dominates along typical orbits. These statistical properties, in turn, are governed by the system's stationary measures (as we discuss in Section~\ref{sec:randskew}), which can be characterized as fixed points of suitable transfer operators (see Section~\ref{c21}). The presence of noise has a regularizing effect on the transfer operators, making them and their fixed points approximable via finite-element methods: the core of the computer-assisted strategies we use. In this sense, noise helps the computer capture the dynamics at finite resolution in a way that enables rigorous inference about the underlying system.

Computer-assisted approaches to the approximation of invariant measures via discretization of the transfer operator have been extensively developed in the deterministic case, beginning with Ulam's pioneering ideas \cite{Ulam}. See, e.g., \cite{BB, BS, BM, FC, DelJu02, GMNP, GN, GaNi2, KMY, L, W} for developments using various approximation schemes, including Fourier and polynomial bases. Other strategies relying on periodic orbits have also been explored \cite{PJ99}.

The strategy we apply here to get explicit estimates on the finite elements reduction error,  relies on the {\em a posteriori approach} proposed in \cite{GN}, allowing the convergence to equilibrium to be tested on the discretized transfer operator. With this approach the convergence to equilibrium estimate is performed by the computer instead of being given a priori by general considerations on the system.
To allow such a complicated estimate to be performed efficiently we also implement a multi resolution approach, we   called "coarse-fine strategy" (\cite{GMNP},\cite{GNS}) where the convergence to equilibrium of a large finite state markov chain is estimated by the convergence to equilibrium of a lower resolution version of it satisfying the same Lasota-Yorke-Doeblin-Fortet general estimates. 

From an abstract computability perspective, \cite{GHR} shows that even some computable deterministic systems may lack computable invariant measures. Similar noncomputability results are known for physical measures of smooth maps \cite{RY}. However, such barriers are removed in the presence of random noise: \cite{BGR} shows that in a compact phace space,  random systems with additive noise supported on a small ball always admit computable stationary measures.

Nevertheless, efficient algorithms for approximating stationary measures in the random setting remain relatively scarce \cite{G2020,MSDGG,Froyland_2014}. This paper aims to provide a highly efficient solution to this problem when the noise is Gaussian. While results are available for stochastic differential equations (see \cite{BredenEngel2023}), and in the SDE case alternatives such as adjoint methods exist \cite{breden2025rigorousenclosurelyapunovexponents}, our approach is based directly on discretization of the transfer operator using a Fourier scheme.

We demonstrate the effectiveness of the method with high-accuracy enclosures of the stationary measure and Lyapunov exponent in concrete one-dimensional examples. We then apply these tools to study transitions such as Noise Induced Order and Noise Induced Chaos.

We thank the referee for pointing out the close conceptual link with~\cite{G2020}.
While the overarching philosophy is similar---validated discretization of the annealed transfer operator combined with a posteriori error control---the present work operates in a different regularity regime and therefore relies on a substantially different (and complementary) numerical strategy.

In~\cite{G2020} the noise has bounded variation and compact support, providing only BV-type regularization. 
Here the additive noise is Gaussian, hence smooth with an analytic kernel; this yields much stronger smoothing, in particular analyticity of the stationary density and rapid (exponentially fast) decay of Fourier coefficients. 
This structural difference is the main reason why the present method attains high precision with a comparatively small truncation dimension.

Moreover, the BV/Ulam-type discretization in~\cite{G2020} produces very large (though sparse) Markov matrices and typically requires fine partitions (and thus heavier computations) to obtain tight enclosures; we refer to~\cite{GMNP} for an in-depth analysis of the associated coarse-fine certification strategy. 
In contrast, we employ a Fourier--Galerkin scheme tailored to Gaussian smoothing: the noise operator is diagonal in the Fourier basis and high modes are exponentially suppressed. 
As a consequence, although the resulting matrices are dense, they are of much smaller dimension for comparable (indeed higher) accuracy, leading to a significantly faster validated computation.

Finally, this efficiency makes it practical here to rigorously explore broad regions of parameter space (producing validated ``Lyapunov landscapes'' and locating zero-crossing hypersurfaces), and to reach smaller noise intensities while maintaining rigorous error control.
The approach of~\cite{G2020} is instead well-suited for targeted investigations (e.g.\ slices or isolated parameter values) in settings where a BV framework is natural (as in many applied models) and the sparse structure is advantageous. 
In this sense, the two methods are complementary:~\cite{G2020} provides computer-assisted proofs of NIO in the BV-noise setting, whereas the present paper develops a high-precision Fourier framework exploiting Gaussian regularization to enable systematic rigorous exploration across parameter space.

\section{Summary of the Results}

This paper has two main contributions:

\begin{itemize}
\item We develop a general theory for approximating the stationary measure of random dynamical systems with Gaussian noise in \( L^2([-1,1]) \), including rigorous error bounds based on a Fourier finite-element discretization of the transfer operator. This framework allows one to rigorously enclose the Lyapunov exponent.

\item We apply this method to study a family of unimodal maps previously introduced in \cite{Nisoli2022}, demonstrating the presence of multiple noise-induced transitions and visualizing the landscape of the Lyapunov exponent in parameter space.
\end{itemize}

The following computability result underpins the general theory:

\begin{main_computability}
Let \( T : [-1,1] \to \mathbb{R} \) be a measurable map such that the pushforward of Lebesgue measure by \( T \) is absolutely continuous with respect to Lebesgue measure (i.e., \( T \) is non-singular). Define the random dynamical system with Gaussian noise:
\[
X_{n+1} = \tau(T(X_n) + \Omega_n),
\]
where \( \Omega_n \sim \mathcal{N}(0,\sigma^2) \) are i.i.d. random variables, and \( \tau : \mathbb{R} \to [-1,1] \) is a periodic boundary condition.

Then, for every \( \sigma > 0 \), the system admits a unique stationary measure \( \mu_\sigma = f_{\sigma} m \), with analytic density \( f_\sigma\), where \( m \) is the Lebesgue measure on \( [-1,1] \).

Moreover, for every observable \( \phi \in L^2(m) \), the Birkhoff average
\[
\frac{1}{N} \sum_{n=0}^{N-1} \phi(X_n)
\]
converges almost surely to \( \int \phi \, d\mu_\sigma \), and this integral can be rigorously enclosed with arbitrary precision using the algorithm presented in Section \ref{c23}.
\end{main_computability}

The proof of this result is developed throughout the paper. Existence of the stationary measure is established in Lemma~\ref{lemma:existence}, while uniqueness is shown in Proposition~\ref{prop:unique} as a consequence of the Doeblin argument used to estimate mixing rates in Proposition~\ref{prop:Doeblin}. Analyticity of the stationary density is proved in Corollary~\ref{cor:analytic}. Furthermore, the same Doeblin-type estimates yield the computability results discussed in Section~\ref{sec:computability}, namely, the computability of the stationary measure in Theorem~\ref{thm:computability} and of Birkhoff averages in Corollary~\ref{cor:BirkhoffAvg}.

As an application, we consider a discrete-time random dynamical system defined by the two-parameter family of maps
\begin{equation}\label{mappa}
T_{\alpha,\beta}(x) = \beta - (1 + \beta)|x|^\alpha
\end{equation}
with Gaussian additive noise. Some sufficient conditions for a transition of Noise Induced Order type were studied in \cite{Nisoli2022} in some case of bounded variation noise. Here in the  Gaussian case we provide a more refined  description of both Noise Induced Order and Noise Induced Chaos exploring a whole region of the parameter space.

We rigorously enclose regions of parameter space where the Lyapunov exponent transitions from positive to negative, proving the existence of multiple noise-induced transitions.

This leads to the following computer-assisted result:

\begin{main_application}\label{teo:application}
Let \( T_{\alpha,\beta} : [-1,1] \to \mathbb{R} \) be as above, and let \( \tau : \mathbb{R} \to [-1,1] \) denote the projection modulo 2. Consider the random system
\[
X_{n+1} = \tau(T_{\alpha,\beta}(X_n) + \Omega_n),
\]
where \( \Omega_n \sim \mathcal{N}(0,\sigma^2) \) are i.i.d. Gaussian variables.

Then the top Lyapunov exponent \( \lambda(\alpha, \beta, \sigma) \) is well-defined for all \( \alpha \in [1, \infty), \beta \in (-1,1], \sigma > 0 \). Moreover:
\begin{itemize}
  \item For \( \beta = 1 \), the function \( \alpha \mapsto \lambda(\alpha,1,\sigma) \) changes sign as \( \sigma \) increases, for \( \alpha \in [3,4] \), from positive to negative.
  \item For \( \alpha = 3 \), the function \( \beta \mapsto \lambda(3,\beta,\sigma) \) exhibits multiple sign changes as \( \sigma \) increases, for \( \beta \in [0.8447, 0.8694] \).
\end{itemize}
\end{main_application}

\begin{remark}
The proof of Main Result 2 is obtained by rigorously enclosing the Lyapunov exponent using validated numerics.

Specifically:
\begin{itemize}
  \item For \( \beta = 1 \), the Lyapunov exponent was enclosed rigorously at \( \alpha = 3 + \frac{i}{1024} \), for \( i = 0, \ldots, 1024\), $\sigma = 1/16+ 15/16 \frac{j}{1024}$ for \( j = 0, \ldots, 1024 \).
  \item For \( \alpha = 3 \), the Lyapunov exponent was enclosed rigorously at \( \beta = \frac{51}{64} + \frac{i}{8192} \), for \( i = 1, \ldots, 1024 \), $\sigma = 1/16+ 15/16 \frac{j}{1024}$ for \( j = 0, \ldots, 1024 \).
\end{itemize} 
\end{remark}





It is known that in the present case, considering smooth families of  systems with additive smooth noise and periodic boundary conditions (\cite{GS}) the stationary measure and hence the Lyapunov exponent vary in a smooth way as the parameters vary.
The following figures show in the case of the family \ref{mappa} with Gaussian noise, how the sign of the Lyapunov exponent varies with the parameters, by rigorously computed enclosures produced by our code:
\begin{itemize}
	\item Figure~\ref{NoiseInducedOrder} shows the regions in the \((\alpha, \sigma)\)-plane where the Lyapunov exponent transitions from positive (red) to negative (blue).
	\item Figure~\ref{NoiseInducedChaos2} shows the analogous transition in the \((\beta, \sigma)\)-plane for fixed \(\alpha = 3\).
\end{itemize}

\begin{figure}[!htb]
	\centering
	\includegraphics[width=80mm]{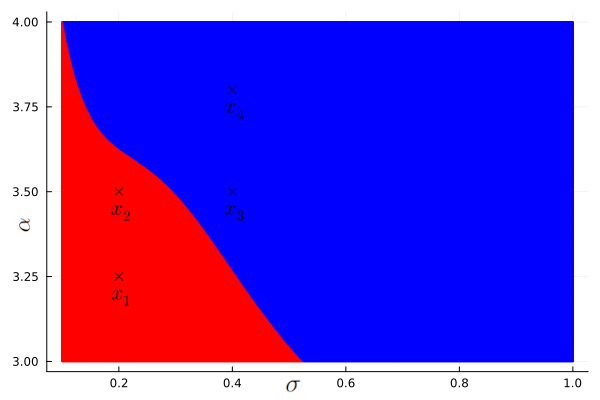}
	\caption{Parameter region in the \((\alpha, \sigma)\)-plane. Blue: \(\lambda < 0\), Red: \(\lambda > 0\).
	Rigorous enclosures for the Lyapunov exponent at the points marked with black crosses can be found in Table \ref{tab:valNIO}.
	\label{NoiseInducedOrder}
	The Lyapunov exponent was rigorously enclosed on the points of the grid \( \alpha = 3 + \frac{i}{1024} \), for \( i = 0, \ldots, 1024\), $\sigma = 1/16+ 15/16 \frac{j}{1024}$ for \( j = 0, \ldots, 1024 \)}. 
\end{figure}

\begin{figure}[!htb]
	\centering
	\includegraphics[width=80mm]{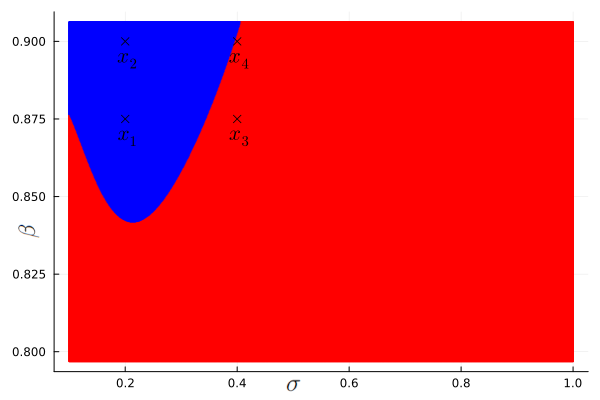}
	\caption{Parameter region in the \((\beta, \sigma)\)-plane for \(\alpha = 3\). Blue: \(\lambda < 0\), Red: \(\lambda > 0\).
	Rigorous enclosures for the Lyapunov exponent at the points marked with black crosses can be found in Table \ref{tab:valNIC}.
	The Lyapunov exponent was enclosed rigorously on the points of the grid \( \beta = \frac{51}{64} + \frac{i}{8192} \), for \( i = 1, \ldots, 1024 \), $\sigma = 1/16+ 15/16 \frac{j}{1024}$ for \( j = 0, \ldots, 1024 \).}
	\label{NoiseInducedChaos2}
\end{figure}

We emphasize that both Figure~\ref{NoiseInducedOrder} and Figure~\ref{NoiseInducedChaos2} are obtained using fully rigorous numerical methods. As such, up to errors introduced by graphical rendering (e.g., rasterization or color interpolation), they can be interpreted as certified results. In particular, they provide an effective visualization of \emph{Main Result 2} stated above. The crosses in the figures correspond to sample enclosures of the Lyapunov exponent presented in Tables~\ref{tab:valNIO} and~\ref{tab:valNIC}, we report as examples of the computation done, presenting explicit values and enclosure bounds; the data is available at \cite{LyapData}.

The values reported in Table~\ref{tab:valNIO} were computed to machine precision using a Galerkin truncation at frequency $128$, with a total runtime of $2.02$ seconds on a single thread of a Ryzen 5 5600 processor. No parallelization was used, and the annealed transfer operator was recomputed independently for each pair of parameter values. Consequently, the average runtime for the full computation -including operator discretization, mixing time estimation, and rigorous Lyapunov exponent evaluation- is approximately $0.5$ seconds per parameter point. Despite its simplicity, this naive implementation already delivers high performance, achieving $15$ certified digits without requiring any specialized optimization.

For the full parameter space plots (Figures~\ref{NoiseInducedOrder} and~\ref{NoiseInducedChaos2}), we employed only minimal optimization: in particular, we computed the transfer operator of the deterministic map once for each fixed value of $\alpha$ or $\beta$, and reused it across all $\sigma$ values. This reduced computational cost while preserving the rigor of the enclosures. As a result, our algorithm remains both validated and efficient, and is well suited for the systematic exploration of large parameter regions in a robust way.

We stress again that both the algorithm and the discretization scheme are independent of the specific dynamical properties of the underlying deterministic map. This reflects an instance of the a posteriori validation philosophy, which in our context evolves from the coarse-fine strategy developed in~\cite{GMNP}. The essential analytical estimates—such as mixing and spectral contraction—are established through rigorous linear algebra computations performed on the discretized operator. As a result, our method is problem-agnostic within the class of random dynamical systems with Gaussian noise superimposed on non-singular interval maps. This structural independence underlines the generality and flexibility of our approach.

\begin{table}[h!]
\centering
\begin{tabular}{cccc}
Label &$\alpha$ & $\sigma$ & $\lambda$ \\
$x_1$ &$3.25$ & $0.2$ & $0.139610862369467 \pm 10^{-15}$\\
$x_2$ &$3.5$ & $0.2$ & $0.047682027067898 \pm 10^{-15}$ \\
$x_3$ &$3.5$ & $0.4$ & $-0.095573727164159 \pm 10^{-15}$ \\
$x_4$ &$3.8$ & $0.4$ & $-0.223066357002470 \pm 10^{-15}$ \\
\end{tabular}
\caption{Sample values in Figure \ref{NoiseInducedOrder}\label{tab:valNIO}}
\end{table}

\begin{table}[h!]
\centering
\begin{tabular}{cccc}
Label & $\beta$ & $\sigma$ & $\lambda$ \\
$x_1$ & $0.875$ & $0.2$ & $0.058040752469902 \pm 10^{-15}$\\
$x_2$ & $0.9$ & $0.2$ & $0.098888825866413\pm 10^{-15}$ \\
$x_3$ & $0.875$ & $0.4$ & $-0.036288212585984 \pm 10^{-15}$ \\
$x_4$ & $0.9$ & $0.4$ & $-0.003180843719330 \pm 10^{-15}$ \\
\end{tabular}
\caption{Sample values in Figure \ref{NoiseInducedChaos2}\label{tab:valNIC}}
\end{table}

It is worth emphasizing that linear response methods have been developed and adapted to the class of perturbations studied in this paper \cite{GG, GS}. In particular, the theory of linear response for systems perturbed by Gaussian noise, as developed in \cite{GS}, combined with the implicit function theorem, supports our claim that the zeros of the Lyapunov exponent lie along a hypersurface in parameter space. A more detailed investigation of the dynamics associated with parameters on this hypersurface presents a promising direction for future research, as such systems may exhibit intermittent-like behavior.

Indeed, recent work by Goverse and collaborators \cite{goverse2025intermittenttwopointdynamicstransition} shows that, at the onset of chaos (i.e., when the Lyapunov exponent vanishes) in random circle maps, one observes intermittent two-point dynamics and infinite invariant measures. Intermittent two point behavior is exactly the behavior that may arise in our setting along the hypersurface where the Lyapunov exponent is zero. Numerical evidence of similar intermittent two-point dynamics has also been observed in our simulations; see Figure~\ref{fig:two_point}. In Table~\ref{tab:crossing}, we report validated enclosures for values of the noise amplitude $\sigma$ as a function of the shape parameter $\alpha$ at which this transition occurs, and in Figure~\ref{fig:transitions}, we display an enclosure of the function $\sigma(\alpha)$ at sample points. We stress, however, that our current computations only establish the existence of such transitions, without addressing their uniqueness or potential multiplicity.

\begin{figure}[htbp]
  \centering
  \begin{subfigure}{0.33\textwidth}
    {\includegraphics[width=\linewidth]{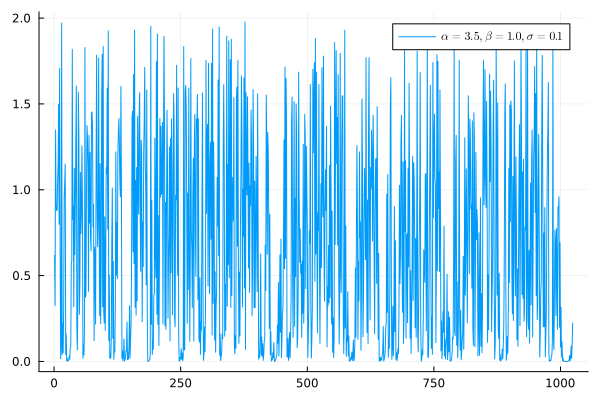}}
  \end{subfigure}
  \begin{subfigure}{0.33\textwidth}
    {\includegraphics[width=\linewidth]{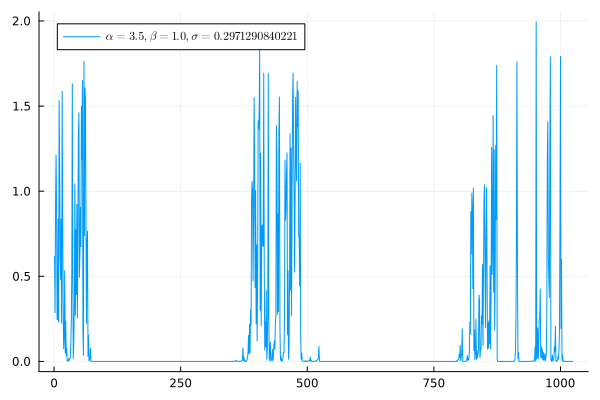}}
  \end{subfigure}
  \begin{subfigure}{0.33\textwidth}
    {\includegraphics[width=\linewidth]{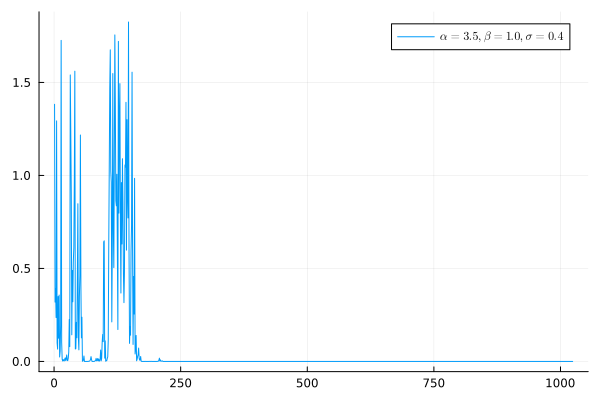}}
  \end{subfigure}
  \caption{Numerical experiments plotting $|T^i_{\omega}x_0-T^i_{\omega}(y_0)|$ for initial conditions $x_0 = -0.1, y_0 = 0.9$
  for noise realizations with $\sigma = 0.1, 0.2971290840221, 0.4$; the transition from positive to negative Lyapunov exponent 
  was rigorously enclosed in $[0.2971290840221, 0.2971290840222]$.}
  \label{fig:two_point}
\end{figure}

\begin{figure}[!htb]
	\centering
	\includegraphics[width=80mm]{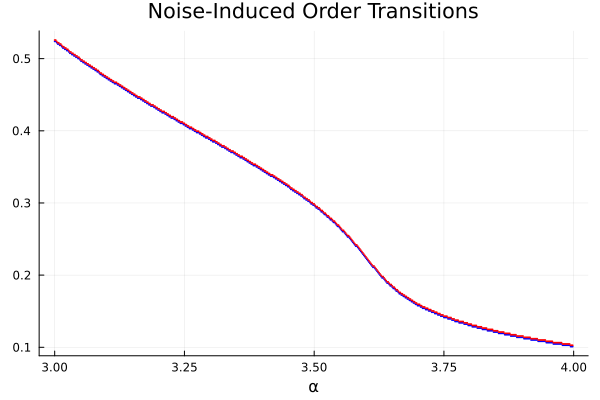}
	\caption{Enclosure of the transition value $\sigma$ as a function of $\alpha$; in blue the lower bound, in red the upper bound.
	This figure gives a graphical representation of the definition at which the hypersurface of zero Lyapunov exponent is known in our rigorous data.}
	\label{fig:transitions}
\end{figure}

\begin{landscape}
	\begin{table}[ht]
		\centering
		\caption{Sample of detected Lyapunov Crossings}\label{tab:crossing}
		\begin{tabular}{|c|c|c|c||c|c|}
			\hline
			\textbf{Row} &$\alpha$ & $\sigma_1$ & $\lambda_1$  & $\sigma_2$ & $\lambda_2$ \\
			\hline
$1$ & $3.0$ & $0.523926$ & $[0.00107101, 0.00107102]_{256}$ & $0.525757$ & $[-0.0005515, -0.000551499]_{256}$ \\
$2$ & $3.04883$ & $0.498291$ & $[0.000820052, 0.000820053]_{256}$ & $0.500122$ & $[-0.000905828, -0.000905827]_{256}$ \\
$3$ & $3.09766$ & $0.474487$ & $[0.000816379, 0.00081638]_{256}$ & $0.476318$ & $[-0.000986452, -0.000986451]_{256}$ \\
$4$ & $3.14648$ & $0.452515$ & $[0.000518536, 0.000518537]_{256}$ & $0.454346$ & $[-0.00133441, -0.0013344]_{256}$ \\
$5$ & $3.19531$ & $0.430542$ & $[0.00132734, 0.00132735]_{256}$ & $0.432373$ & $[-0.000545068, -0.000545067]_{256}$ \\
$6$ & $3.24414$ & $0.4104$ & $[0.000980779, 0.00098078]_{256}$ & $0.412231$ & $[-0.000883612, -0.000883611]_{256}$ \\
$7$ & $3.29297$ & $0.390259$ & $[0.000958772, 0.000958773]_{256}$ & $0.39209$ & $[-0.000864106, -0.000864105]_{256}$ \\
$8$ & $3.3418$ & $0.370117$ & $[0.000848431, 0.000848432]_{256}$ & $0.371948$ & $[-0.000897693, -0.000897692]_{256}$ \\
$9$ & $3.39062$ & $0.349976$ & $[0.000214735, 0.000214736]_{256}$ & $0.351807$ & $[-0.00141992, -0.00141991]_{256}$ \\
$10$ & $3.43945$ & $0.328003$ & $[0.000105052, 0.000105053]_{256}$ & $0.329834$ & $[-0.00136926, -0.00136925]_{256}$ \\
$11$ & $3.48828$ & $0.302368$ & $[0.000835062, 0.000835063]_{256}$ & $0.304199$ & $[-0.000416835, -0.000416834]_{256}$ \\
$12$ & $3.53711$ & $0.273071$ & $[0.000731332, 0.000731333]_{256}$ & $0.274902$ & $[-0.000262686, -0.000262685]_{256}$ \\
$13$ & $3.58594$ & $0.23645$ & $[0.00018605, 0.000186051]_{256}$ & $0.238281$ & $[-0.000581865, -0.000581864]_{256}$ \\
$14$ & $3.63477$ & $0.194336$ & $[0.000300537, 0.000300538]_{256}$ & $0.196167$ & $[-0.000607178, -0.000607177]_{256}$ \\
$15$ & $3.68359$ & $0.165039$ & $[0.000930034, 0.000930035]_{256}$ & $0.16687$ & $[-0.000639231, -0.00063923]_{256}$ \\
$16$ & $3.73242$ & $0.146729$ & $[0.00171531, 0.00171532]_{256}$ & $0.14856$ & $[-0.000694382, -0.000694381]_{256}$ \\
$17$ & $3.78125$ & $0.133911$ & $[0.00236626, 0.00236627]_{256}$ & $0.135742$ & $[-0.000913736, -0.000913735]_{256}$ \\
$18$ & $3.83008$ & $0.124756$ & $[0.00134207, 0.00134208]_{256}$ & $0.126587$ & $[-0.00273638, -0.00273637]_{256}$ \\
$19$ & $3.87891$ & $0.115601$ & $[0.00469925, 0.00469926]_{256}$ & $0.117432$ & $[-0.000340576, -0.000340575]_{256}$ \\
$20$ & $3.92773$ & $0.110107$ & $[0.00127257, 0.00127258]_{256}$ & $0.111938$ & $[-0.00444233, -0.00444232]_{256}$ \\
$21$ & $3.97656$ & $0.104614$ & $[1.11679\times10^{-5}, 1.1168\times10^{-5}]_{256}$ & $0.106445$ & $[-0.00644444, -0.00644443]_{256}$ \\
			\hline
		\end{tabular}
	\end{table}
\end{landscape}

The full results of the computation are accessible at Harvard Dataverse \cite{LyapData}; the data is provided both as a dataframe 
in JLD2 format and in CSV format.
The $l^2$ mixing rates for the iterations of the discretized operator are provided, which can be used to estimate the mixing rates
for the abstract annealed operator or as an ingredient in further rigorous computations, as those of linear response and 
diffusion coefficients (see \cite{B2016} and \cite{B2018} where it is shown how the validated computation of the mixing rate allows the validated  computation of diffusion coefficients and linear response).
In figure \ref{fig:mixing} we present heatmaps built upon the rigorous mixing bounds in the $\alpha$, $\sigma$ plane when $\beta = 1.0$. Here, $\mathcal{U}_0 \subset L^2([-1,1])$ denotes the subspace of average $0$ functions in $L^2$; its formal definition is given later in Definition~\ref{def:average0}.

\begin{figure}[htbp]
  \centering
  \begin{subfigure}{0.48\textwidth}
    {\includegraphics[width=\linewidth]{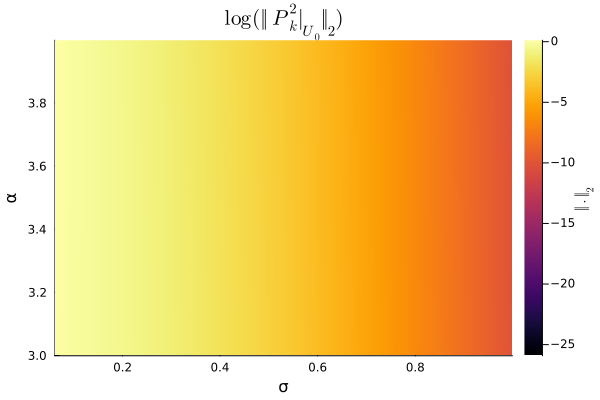}}
  \end{subfigure}
  \hfill
  \begin{subfigure}{0.48\textwidth}
    {\includegraphics[width=\linewidth]{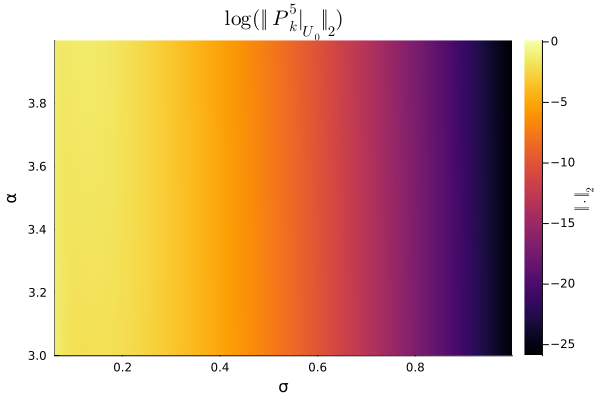}}
  \end{subfigure}
  \caption{Upper bounds on $\log(||P^N_k|_{\mathcal{U}_0}||_2)$ for $N=2$ and $N=5$ as a function of $\alpha$ and $\sigma$.}
  \label{fig:mixing}
\end{figure}



Beyond the specific examples considered in this work, our approach contributes to the broader understanding of random dynamical systems exhibiting nonuniformly hyperbolic behavior. By combining transfer operator techniques, Fourier-based discretization, and validated numerics, we are able to rigorously analyze statistical properties and dynamical transitions even in the absence of uniform expansion.

Future directions include the study of finer statistical features, such as variance, limit theorems, and response functions, following the lines of \cite{B2016, B2018}, as well as a detailed spectral analysis of the associated transfer operators. The latter can be approached using the pseudospectral and rigorous numerical techniques recently developed in \cite{blumenthal2025pseudospectralapproachrigorousnumerical}.

\section{Logistic Maps with noise}\label{sec:rand_skew}
In this section we define more precisely the systems we are going to consider.
In this work, we consider  the following family of maps,  generalization of the quadratic family:
\begin{align}
	T_{\alpha,\beta}(x)=\beta-(1+\beta)|x|^{\alpha},\label{p1}
\end{align}
where $\alpha \geq 1$, $\beta \in (-1,1]$ and $x\in[-1,1]$.\\
We introduce the notations we use for the Gaussian distribution, which will be necessary to define the random dynamical system.

\begin{definition}
	A random variable \(\Omega\) is said to have a \textbf{Gaussian distribution} if its density function is given by
	\[
		\rho_{\sigma}(x) = \frac{1}{\sigma \sqrt{2\pi}} e^{-x^{2}/(2\sigma^2)}.
	\]
\end{definition}

While the Gaussian distribution has unbounded support, the empirical rule (also known as the 68–95–99.7 rule) indicates that the standard deviation effectively quantifies the typical size of noise fluctuations.

\begin{definition}\label{c10}
	Define \(\tau: \mathbb{R} \rightarrow [-1,1)\) by
	\[
		\tau(x) = x - 2\left\lfloor \frac{x+1}{2} \right\rfloor.
	\]
	We refer to \(\tau\) as the \textbf{periodic boundary condition}.
\end{definition}

\begin{remark}
	The function \(\tau\) maps \(\mathbb{R}\) onto \([-1,1)\) by identifying points modulo the equivalence relation
	\[
		x \sim y \quad \text{if } x - y = 2k, \quad k \in \mathbb{Z}.
	\]
\end{remark}

With these definitions in place, we can now describe the dynamical system with Gaussian noise
\begin{align}
	X_{n+1}=\tau(T_{\alpha,\beta}(X_n)+\Omega_{\sigma}(n)),\label{p3}
\end{align}
where $\Omega_{\sigma}(n)$ are random variables i.i.d. with Gaussian distribution and  $\tau$ the boundary condition.

\section{Lyapunov exponent for the r.d.s.}\label{sec:randskew}
In this section, we recall basic definitions for random dynamical systems with additive noise on the interval \( [-1,1] \), including the Lyapunov exponent.

We model the system as a skew product:
\[
F: \mathbb{R}^{\mathbb{N}} \times [-1,1] \rightarrow \mathbb{R}^{\mathbb{N}} \times [-1,1], \quad
(\omega, x) \mapsto \left( s(\omega), \tau(T_{\alpha,\beta}(x) + \omega_0) \right),
\]
where \( T_{\alpha,\beta} : [-1,1] \to \mathbb{R} \) is a measurable map, \( \tau : \mathbb{R} \to [-1,1] \) denotes the periodic projection (e.g., modulo \(2\)), and \( s : \mathbb{R}^{\mathbb{N}} \to \mathbb{R}^{\mathbb{N}} \) is the left shift: \( s(\omega)_n = \omega_{n+1} \).

The space \( \Omega := \mathbb{R}^{\mathbb{N}} \), representing the realizations of the noise process, is endowed with the product topology and the corresponding Borel \(\sigma\)-algebra. We define the product probability measure \( \nu := \rho_\sigma^{\mathbb{N}} \), where \( \rho_\sigma \) is the Gaussian measure \( \mathcal{N}(0, \sigma^2) \) on \( \mathbb{R} \). We refer to \( (\Omega, \nu) \) as the \textbf{noise space}.

The random dynamical system is then described by the iterates of \( F \), acting on the product space \( \Omega \times [-1,1] \), with randomness encoded in the noise sequence \( \omega \in \Omega \).

We are interested in measuring the average expansion along orbits of the skew product, that is, in studying the limit
\[
\lim_{n \to \infty} \frac{1}{n} \log \left| \frac{\partial F^n}{\partial x}(\omega, x) \right|.
\]
We note that the function \(\tau\) is piecewise linear with jump discontinuities at the points \(x = 2k - 1\), \(k \in \mathbb{Z}\), where the floor function in its definition causes discontinuous jumps. Consequently, \(\tau\) is not differentiable at these points. However, since the set of discontinuities is countable, it has Lebesgue measure zero. Therefore, the chain rule
\[
\frac{d}{dx} \tau(T_{\alpha,\beta}(x) + y) = T'_{\alpha,\beta}(x)
\]
holds for almost every \(x\), provided that \(T_{\alpha,\beta}\) is differentiable and the argument \(T_{\alpha,\beta}(x) + y\) avoids the discontinuity set of \(\tau\) for almost all \(y\). Since the noise distribution is absolutely continuous, this is indeed the case for almost every realization.

Thus, letting \(\mathrm{proj}_x\) denote the projection onto the second coordinate, and setting \(X_i := \mathrm{proj}_x(F^i(\omega,x))\), we find
\[
\frac{1}{n} \log \left| \frac{\partial F^n}{\partial x}(\omega, x) \right| = \frac{1}{n} \sum_{i=0}^{n-1} \log |T'_{\alpha, \beta}(X_i)|.
\]

\begin{definition}
	A probability measure \(\mu\) on \([-1, 1]\) is called \textbf{stationary} if the product measure \(\nu \times \mu\) is invariant under the skew product \(F\).
\end{definition}

\begin{definition}
	A stationary measure \(\mu\) is said to be \textbf{ergodic} if \(\nu \times \mu\) is ergodic with respect to the skew product \(F\).
\end{definition}

\begin{lemma}\label{c41}
	Suppose the random dynamical system admits a unique stationary measure \(\mu_{\sigma}\). Let \(\phi: \mathbb{R}^{\mathbb{N}} \times [-1,1] \to \mathbb{R}\) be a measurable function such that \(\phi(\omega,x)\) depends only on \(x\), i.e., there exists \(\bar{\phi}\)  such that \(\phi(\omega,x) = \bar{\phi}(x)\). Suppose $\bar{\phi}\in L^1(\mu_\sigma)$, then for \(\mu_{\sigma}\)-almost every \(x_0\) and for \(\nu\)-almost every \(\omega\),
	\[
	\lim_{n \to \infty} \frac{1}{n} \sum_{i=0}^{n-1} \phi(F^i(\omega, x_0)) = \int \bar{\phi} \, d\mu_{\sigma}.
	\]
\end{lemma}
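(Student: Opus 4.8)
The plan is to recognize Lemma~\ref{c41} as a Birkhoff ergodic theorem applied to the skew product $F$ on the product space $\Omega \times [-1,1]$, equipped with the invariant measure $\nu \times \mu_\sigma$. The observable $\phi(\omega,x) = \bar\phi(x)$ is in $L^1(\nu\times\mu_\sigma)$ precisely because $\bar\phi \in L^1(\mu_\sigma)$ and $\nu$ is a probability measure, so by Fubini $\int |\phi|\,d(\nu\times\mu_\sigma) = \int|\bar\phi|\,d\mu_\sigma < \infty$. Since $\mu_\sigma$ is stationary, $\nu\times\mu_\sigma$ is $F$-invariant by definition, and the classical Birkhoff theorem guarantees that the time averages $\frac1n\sum_{i=0}^{n-1}\phi(F^i(\omega,x_0))$ converge $(\nu\times\mu_\sigma)$-almost everywhere to the conditional expectation $\mathbb{E}[\phi \mid \mathcal{I}]$, where $\mathcal{I}$ is the $\sigma$-algebra of $F$-invariant sets.

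The key step is to identify this limit with the constant $\int \bar\phi\,d\mu_\sigma$, which requires ergodicity of $\nu\times\mu_\sigma$ under $F$. Here I would invoke the standard fact that for random dynamical systems with i.i.d.\ noise, uniqueness of the stationary measure is equivalent to ergodicity of the associated skew product with respect to $\nu\times\mu_\sigma$. The hypothesis of the lemma is exactly that $\mu_\sigma$ is the \emph{unique} stationary measure; I would therefore first establish that $\nu\times\mu_\sigma$ is ergodic. The standard argument: if $\nu\times\mu_\sigma$ were not ergodic, one could decompose it into distinct ergodic components, and each such component would project (via disintegration along the base $\Omega$ and averaging) to a stationary measure on $[-1,1]$; two distinct ergodic components would then yield two distinct stationary measures, contradicting uniqueness. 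Once ergodicity is in hand, the Birkhoff limit is the space average $\int\phi\,d(\nu\times\mu_\sigma) = \int\bar\phi\,d\mu_\sigma$, again by Fubini.

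The remaining point is to upgrade the conclusion from ``$(\nu\times\mu_\sigma)$-almost every $(\omega,x_0)$'' to the product statement ``for $\mu_\sigma$-almost every $x_0$ and $\nu$-almost every $\omega$.'' By Fubini applied to the full-measure set $G\subset\Omega\times[-1,1]$ on which convergence holds, the section $G_{x_0} = \{\omega : (\omega,x_0)\in G\}$ has full $\nu$-measure for $\mu_\sigma$-almost every $x_0$, which is precisely the desired form. I would write this step explicitly since the lemma statement orders the quantifiers in this particular way.

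The main obstacle I anticipate is the ergodicity step: translating uniqueness of the stationary measure into ergodicity of the skew product is the only nonformal ingredient, and it relies on the correspondence between $F$-invariant measures of the form $\nu\times\mu$ and stationary measures $\mu$. A clean way to handle this is to recall that the map $\mu\mapsto\nu\times\mu$ is an affine bijection between stationary measures and those $F$-invariant measures whose $\Omega$-marginal is $\nu$, and that it sends extreme points to extreme points; uniqueness of the stationary measure then forces $\nu\times\mu_\sigma$ to be extremal, hence ergodic. Everything else is a direct application of Birkhoff's theorem and Fubini's theorem, so those steps can be stated succinctly.
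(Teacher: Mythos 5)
Your proposal is correct and follows essentially the same route as the paper: deduce ergodicity of \(\nu\times\mu_\sigma\) from uniqueness of the stationary measure via the ergodic decomposition for stationary measures (the paper cites \cite[Theorem 5.14]{V2014} for this), check integrability of \(\phi\) over the product, and apply the Birkhoff ergodic theorem to the skew product. Your additional explicit Fubini step to pass from ``full \(\nu\times\mu_\sigma\)-measure'' to the ordered quantifiers in the statement is a minor refinement the paper leaves implicit, not a different approach.
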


\begin{proof}
	The ergodic decomposition theorem for stationary measures \cite[Theorem 5.14]{V2014} asserts that every stationary measure is a convex combination of ergodic stationary measures. Since \(\mu_{\sigma}\) is unique, it must be ergodic. Because \(\bar{\phi} \in L^1(\mu_\sigma)\), it follows that \(\phi \in L^1(\nu \times \mu_\sigma)\). Therefore, by the Birkhoff Ergodic Theorem,
	\[
	\lim_{n \to \infty} \frac{1}{n} \sum_{i=0}^{n-1} \phi(F^i(\omega, x_0)) = \int \phi(\omega, x) \, d(\nu \times \mu_\sigma) = \int \bar{\phi}(x) \, d\mu_\sigma
	\]
	for \((\omega, x_0)\) in a set of full \(\nu \times \mu_\sigma\)-measure.
\end{proof}

In our setting, the function \(\phi(\omega, x) = \log |T'_{\alpha,\beta}(x)|\) depends only on the second coordinate and satisfies the integrability condition required by Lemma~\ref{c41}. Therefore, the Birkhoff average converges almost surely to the space average with respect to \(\mu_\sigma\), and we obtain the following characterization of the Lyapunov exponent:
\begin{align}
	\lim_{n \to \infty} \frac{1}{n} \log \left| \frac{\partial F^n}{\partial x}(\omega, x) \right| = \int \log |T'_{\alpha,\beta}| \, d\mu_\sigma. \label{c42}
\end{align}
Thus, when the stationary measure is unique, the Lyapunov exponent can be computed by integrating the logarithm of the derivative of the map with respect to the stationary measure.

As \(\sigma\) varies, we are interested in how the Lyapunov exponent behaves as a function of noise amplitude and its dependence on the parameters $\alpha$ and $\beta$, i.e., studying the following function
\[
\lambda(\alpha,\beta,\sigma) = \int_{-1}^{1} \log |T'_{\alpha,\beta}| \, d\mu_\sigma;
\]
the next sections will be devoted to proving it is well-defined and some of its properties.

\begin{definition}\label{f40}
	We say that a system exhibits a \textbf{transition of the Lyapunov exponent} if there exist \(0 < \sigma_1 < \sigma_2\) such that for all \(\sigma \geq \sigma_1\), the system admits a unique stationary measure with density \(f_{\sigma}\), and the Lyapunov exponent transitions continuously changing sign, that is, \(\lambda(\sigma_1) \cdot \lambda(\sigma_2) < 0\). We say the system exhibits a \textbf{multiple transition} if several such sign transitions occur.
\end{definition}

\section{The annealed transfer operator}\label{c21}
Following a well-established strategy, we want to reframe the problem of finding a stationary measure 
as a fixed point problem for an appropriate operator acting on measures, the annealed transfer operator.
In this section we define the operator and show that is regularizing, i.e., preserves a space of 
regular densities. We start by investigating the properties of convolution with a Gaussian kernel.

\begin{definition}\label{c51}
	Denote by
	\begin{align*}
		\widehat{f}(x)=\begin{cases}
			f(x), & x\in[-1,1], \\[0.3em]
			0, & \text{other case}. \\
		\end{cases}
	\end{align*}
	the function that extend $f$ by $0$ outside its intervals of definition.
	Given a probability measure $\mu$ on $[-1,1]$, we define its extension $\widehat{\mu}$ on $\mathbb{R}$ as the unique measure $\widehat{\mu}$ on $\mathbb{R}$ such that
	\begin{align*}
		\widehat{\mu}(A)=\mu(A\cap [-1,1])
	\end{align*}
	for all $A$ measurable in $\mathbb{R}$.
\end{definition}

We will define an operator as the convolution of a measure with $\rho_{\sigma}$, and we will examine some of its regularizing properties. The most important of these is Proposition $\ref{c16}$, which tells us that the convolution of a measure with $\rho_{\sigma}$ is absolutely continuous with respect to the Lebesgue measure.

\begin{definition}\label{def:convolution}
Let $\mu$ be any probability measure on $[-1,1]$, and $\rho_{\sigma}$ the Gaussian distribution; their convolution $\rho_{\sigma}*\widehat{\mu}$ is the unique probability measure on $\mathbb{R}$ such that
\[
\rho_\sigma * \widehat{\mu}(A)
=
\int_A \int_{-1}^{1} \rho_\sigma(y-z)\, d\widehat{\mu}(z)\, dm,
\]
\end{definition}
where $m$ is nonnormalized Lebesgue measure.
\begin{remark}
Definition~\ref{def:convolution} is equivalent to
\[
(\rho_{\sigma}*\widehat{\mu})(A)
=\int_{\mathbb R}\rho_{\sigma}(y)\,\widehat{\mu}(A-y)\,dm(y),
\qquad A-y:=\{x-y:\ x\in A\},
\]
This form highlights that the noise acts on the system by averaging over translations.
\end{remark}

The following lemma, provided without proof, states some properties of extension and convolution that follow 
directly from the definition.
\begin{lemma}\label{c6}
Let $\mu$ be a probability measure on $[-1,1]$ 
\begin{enumerate}
	\item if $\mu= fdm$ we have that $\widehat{\mu}=\widehat{f}dm$,
	\item $\rho_{\sigma}*\widehat{\mu}(\mathbb{R})=1$,
	\item if $\mu=fdm$ then $\rho_{\sigma}*\widehat{\mu}$ has density $\rho_{\sigma}*\widehat{f}$ (i.e. $\rho_{\sigma}*\widehat{\mu}=(\rho_{\sigma}*\widehat{f})dm$).
\end{enumerate}
\end{lemma}

The proofs of these lemmas follow immediately from the definition. We show now a simple result, proving that the convolution with a Gaussian kernel maps measures into the more 
regular space of absolutely continuous invariant measures.
\begin{proposition} \label{c16}
Let $\mu$ be a probability measure on $[-1,1]$, then $\rho_{\sigma}*\widehat{\mu}$ is a probability measure on $\mathbb{R}$, absolutely continuous with respect to Lebesgue.
\end{proposition}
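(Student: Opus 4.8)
The plan is to show that $\rho_\sigma * \widehat{\mu}$ admits a density with respect to Lebesgue measure by exhibiting that density explicitly as an integral against the extended measure. First I would use the definition of the convolution together with Lemma~\ref{c6}(2) to confirm that $\rho_\sigma * \widehat{\mu}$ is indeed a probability measure on $\mathbb{R}$; this part is immediate. The substantive claim is absolute continuity, and the natural route is to produce a candidate density. I would define
\[
g(x) := \int_{-\infty}^{+\infty} \rho_\sigma(x - t)\, d\widehat{\mu}(t) = \int_{[-1,1]} \rho_\sigma(x - t)\, d\mu(t),
\]
where the second equality uses that $\widehat{\mu}$ is supported on $[-1,1]$ by Definition~\ref{c51}. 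The idea is that convolving a measure with an $L^1$ kernel always yields an absolutely continuous measure whose density is the convolution of the kernel with the measure.

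The key steps would then be: (i) verify that $g$ is a well-defined, nonnegative, measurable function, which follows from the continuity and positivity of the Gaussian kernel $\rho_\sigma$ and the finiteness of $\mu$; (ii) show $g \in L^1(m)$ with $\int g \, dm = 1$, which I would obtain by applying Tonelli's theorem to swap the order of integration in $\int_{\mathbb{R}} g(x)\, dm(x) = \int_{[-1,1]} \left( \int_{\mathbb{R}} \rho_\sigma(x-t)\, dm(x) \right) d\mu(t)$ and using that each inner integral equals $1$ since $\rho_\sigma$ is a probability density invariant under translation; and (iii) confirm that $g$ is in fact the density of $\rho_\sigma * \widehat{\mu}$, i.e.\ that for every Borel set $A$,
\[
(\rho_\sigma * \widehat{\mu})(A) = \int_A g \, dm.
\]
For step (iii) I would start from the definitional formula $(\rho_\sigma * \widehat{\mu})(A) = \int_{\mathbb{R}} \rho_\sigma(y)\, \widehat{\mu}(A - y)\, dm(y)$, rewrite $\widehat{\mu}(A-y) = \int_{[-1,1]} \chi_A(t + y)\, d\mu(t)$, and apply Tonelli again to interchange the integrals, after which the change of variable $x = t + y$ recovers $\int_A g\, dm$.

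The main obstacle, and the only point requiring genuine care, is the justification of the Fubini/Tonelli interchanges: since $\mu$ is only a measure (not necessarily absolutely continuous) and the kernel lives on all of $\mathbb{R}$, one must check measurability of the map $(x,t) \mapsto \rho_\sigma(x-t)$ on the product space and nonnegativity of the integrand so that Tonelli applies without integrability hypotheses. Because $\rho_\sigma$ is continuous and everywhere positive and $\mu$ is a finite Borel measure on the compact set $[-1,1]$, joint measurability and the $\sigma$-finiteness needed for Tonelli hold, so the interchanges are legitimate. Once these are in place, steps (i)--(iii) combine to show that $\rho_\sigma * \widehat{\mu} = g\, dm$ with $g \geq 0$ and $\int g\, dm = 1$, which is precisely the assertion that $\rho_\sigma * \widehat{\mu}$ is a probability measure absolutely continuous with respect to Lebesgue measure. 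I would remark that this density $g$ coincides, in the special case $\mu = f\,dm$, with the convolution $\rho_\sigma * \widehat{f}$ already identified in Lemma~\ref{c6}(3), providing a consistency check.
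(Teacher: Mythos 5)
Your argument is correct, but it takes a genuinely different route from the paper. You construct the density explicitly as $g(x)=\int_{[-1,1]}\rho_\sigma(x-t)\,d\mu(t)$ and verify by two applications of Tonelli that $g\geq 0$, $\int g\,dm=1$, and $(\rho_\sigma*\widehat{\mu})(A)=\int_A g\,dm$ for every Borel $A$; this is the classical direct proof that convolving a finite measure with an integrable kernel produces an absolutely continuous measure, and your attention to the joint measurability of $(x,t)\mapsto\rho_\sigma(x-t)$ is exactly the right point to flag. The paper instead argues by duality: it defines the functional $\Phi(h)=\int h\,d(\rho_\sigma*\widehat{\mu})$ on $L^1(m)$, uses the uniform bound $\rho_\sigma\leq C_\sigma=\tfrac{1}{\sigma\sqrt{2\pi}}$ together with Young's inequality to show $|\Phi(h)|\leq C_\sigma\|h\|_{L^1}$, and then invokes the $L^1$--$L^\infty$ duality (Rudin, Theorem~6.16) to obtain a density $g\in L^\infty(m)$. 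The trade-off is mild: the paper's route delivers the extra information $g\in L^\infty$ with the explicit bound $\|g\|_\infty\leq C_\sigma$ at the cost of appealing to a representation theorem, while your construction is more elementary and self-contained and in fact recovers the same bound for free, since $g(x)\leq C_\sigma\,\mu([-1,1])=C_\sigma$ pointwise from your explicit formula. Your closing consistency check against Lemma~\ref{c6}(3) is a nice touch and matches the paper's usage of that lemma elsewhere.
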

\begin{proof}
Let 
\[
g(x) = \int_{-1}^1 \rho_\sigma(x-z)\, d\widehat{\mu}(z)
\]
Then 
\[
\rho_{\sigma}*\widehat{\mu} = g(x) dm.
\]

\end{proof}

\begin{definition}\label{f43}
Let $\mathcal{B}([-1,1])$ be the Borel $\sigma$-algebra on $[-1,1]$, defined as the restriction of the Borel $\sigma$-algebra of $\mathbb{R}$ to $[-1,1]$.\\ Let $\mu$ be a measure on $([-1,1],\mathcal{B}([-1,1]))$, a measurable map $T:[-1,1]\rightarrow [-1,1]$ is called \textbf{non-singular} with respect to $\mu$ if,	 $\mu(T^{-1}(A))=0$ for all $A\in \mathcal{B}([-1,1])$ such that $\mu(A)=0$.
\end{definition}

We define the Annealed Transfer Operator associated to the system with noise (see e.g. \cite{L2004} or \cite{G1} for an introduction to the topic).
\begin{definition}\label{c3}
	Let $T:[-1,1]\rightarrow [-1,1]$ be a measurable map. The map $T$ induces an operator on $L:\mathcal{SM}([-1,1])\rightarrow \mathcal{SM}([-1,1])$ where $\mathcal{SM}([-1,1])$ is the space of signed measures on $[-1,1]$, defined in the following way: if $\mu\in \mathcal{SM}([-1,1])$ then
	\begin{align*}
		L\mu (A)=\mu (T^{-1} A)
	\end{align*}
	for all measurable set $A$. This operator is called the \textbf{transfer operator} associated to $T$.\\
	The space of Lebesgue absolutely continuous measures is a vector subspace of \\
	$\mathcal{SM}([-1,1])$; if $T$ is non-singular with respect to Lebesgue then $L$ preserves this subspace of absolutely continuous measures and induces an operator from $L^1([-1,1])$ into itself called the \textbf{Perron-Frobenius} operator. We will denote by $P$ the Perron-Frobenius operator.
\end{definition}
Remark that, even though the dynamical system \(T\) is defined on the interval \([-1,1]\), the addition of random noise may result in iterates lying outside this interval. The next definition fixes the notation for the action of
the boundary condition on measures.
\begin{definition}
	Let $\tau:\mathbb{R}\rightarrow [-1,1]$ be the boundary condition as in the Definition \ref{c10}.
	We denote by $\tau_*$ the push-forward map acting on measures by
	\begin{align*}
		(\tau_*\mu)(A)=\mu(\tau^{-1}(A)),
	\end{align*}
i.e., the transfer operator associated to the boundary condition. By abuse de notation $\tau_{*}$ will denote also the maps that induces on densities i.e., if $\mu$ has density $g$, then $\tau_{*}(g)$ is the density of $\tau_{*}\mu$. 
\end{definition}
\begin{definition}
	The annealed transfer operator $L_{\sigma}$ associated to the system with noise is defined by
	\begin{align*}
		L_{\sigma}\mu=\tau_*(\rho_{\sigma}*\widehat{L\mu}).
	\end{align*}
\end{definition}

The next corollary follows directly from Proposition \ref{c16} and the definitions.
\begin{corollary} \label{c17}
	The operator $L_{\sigma}$ induces an operator $P_{\sigma}:L^1(m)\to L^1(m)$, acting on densities such that
	\begin{align*}
		P_{\sigma}f=\tau_{*}(\rho_{\sigma}*\widehat{Pf}).
	\end{align*}
	We refer to this operator as the \textbf{annealed Perron-Frobenius operator}.
\end{corollary}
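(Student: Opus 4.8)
The plan is to prove Corollary~\ref{c17} by verifying directly that the composition of operators appearing in the definition of $L_\sigma$ preserves the subspace of absolutely continuous measures, and then to track how each operator acts on the corresponding densities. The statement is a routine consequence of the preceding results, so the argument is essentially a matter of assembling the pieces in the correct order and confirming that at each stage we remain inside $L^1(m)$.

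First I would start with an absolutely continuous measure $\mu = f\,dm$ with $f \in L^1(m)$. Applying the Perron--Frobenius operator $P$ from Definition~\ref{c3}: since $T$ is non-singular with respect to Lebesgue, $L$ preserves absolute continuity and $L\mu = (Pf)\,dm$ with $Pf \in L^1(m)$. Next I would form the extension and convolve with the Gaussian. By Lemma~\ref{c6}(1), $\widehat{L\mu} = \widehat{Pf}\,dm$, and by Lemma~\ref{c6}(3) the convolution $\rho_\sigma * \widehat{L\mu}$ is again absolutely continuous with density $\rho_\sigma * \widehat{Pf}$; alternatively one may invoke Proposition~\ref{c16} directly to guarantee absolute continuity of the convolution for an arbitrary input measure, which covers the general case. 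The final step is to apply the pushforward $\tau_*$: since $\tau$ is non-singular (its discontinuity set is countable, hence Lebesgue-null, and $\tau$ is piecewise linear), $\tau_*$ maps absolutely continuous measures to absolutely continuous measures, so $L_\sigma \mu = \tau_*(\rho_\sigma * \widehat{L\mu})$ is absolutely continuous. Reading off the density at each stage yields the induced operator $P_\sigma f = \tau_*(\rho_\sigma * \widehat{Pf})$ on $L^1(m)$, where here $\tau_*$ is understood as acting on densities.

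I do not expect any genuine obstacle, since every ingredient is supplied by the earlier results: Proposition~\ref{c16} gives the crucial regularization by the Gaussian kernel, and Lemma~\ref{c6} records exactly how extension and convolution interact with densities. The only point requiring a moment of care is the well-definedness of $\tau_*$ as an operator on densities, i.e.\ that the pushforward under the piecewise-linear periodic projection $\tau$ sends $L^1$ densities on $\mathbb{R}$ to $L^1$ densities on $[-1,1]$. This follows because $\tau$ restricted to each fundamental domain $[2k-1, 2k+1)$ is a measure-preserving bijection onto $[-1,1)$, so the pushforward density is obtained by summing the translates $\sum_{k \in \mathbb{Z}} g(\,\cdot\, + 2k)$ of the input density $g = \rho_\sigma * \widehat{Pf}$, a series that converges in $L^1([-1,1])$ with total mass one. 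Thus $P_\sigma$ is a well-defined Markov operator on $L^1(m)$, completing the proof.
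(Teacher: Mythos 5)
Your proposal is correct and follows essentially the same route as the paper's proof: start from $\mu = f\,dm$, use Definition~\ref{c3} and Lemma~\ref{c6} to track the density through $P$ and the convolution, and then push forward by $\tau$. Your additional remark justifying why $\tau_*$ sends $L^1$ densities on $\mathbb{R}$ to $L^1$ densities on $[-1,1]$ (via the sum of translates over fundamental domains) is a detail the paper leaves implicit, but it does not change the argument.
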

\begin{proof}
	Let $\mu=fdm$ be a absolutely continuous probability measure with density $f$.\\
	By Definition \ref{c3} we have that
	\begin{align*}
		L\mu=Pf dm. 
	\end{align*}
	By Lemma $\ref{c6}$, we have that
	\begin{align*}
		\rho_{\sigma}*\widehat{L\mu}=(\rho_{\sigma}*\widehat{Pf})d m,
	\end{align*}
	where the Lebesgue measure on the right-hand side is defined on $\mathbb{R}$. Therefore, we have that
	\begin{align*}
		\tau_*(\rho_{\sigma}*\widehat{L\mu})=\tau_*(\rho_{\sigma}*\widehat{Pf})d m,
	\end{align*}
	where on the right-hand side $m$ is defined on $[-1,1]$. Let $P_{\sigma}f=\tau_*(\rho_{\sigma}*\widehat{Pf})$, then $L_{\sigma}\mu=P_{\sigma}fd m$.
\end{proof}

The following result connects the annealed transfer operator with the skew-product representation 
of a random dynamical system \cite[Section 5.3]{V2014}.
\begin{lemma}\label{def:stationary}
Let $T$ be any non-singular dynamical map of the interval and let the random system with Gaussian noise be defined as:
\begin{align}
	X_{n+1}=\tau(T(X_n)+\Omega_{\sigma}(n)),
\end{align}
where $\Omega_{\sigma}(n)$ are random variables i.i.d. with Gaussian distribution with standard deviation $\sigma$ and  $\tau$ are periodic boundary conditions. 
If $\mu_{\sigma}$ is a fixed point for $L_{\sigma}$, then $\mu_{\sigma}$ is a stationary measure 
for the random dynamical system.
\end{lemma}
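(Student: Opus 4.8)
The plan is to reduce the statement to the standard characterization of stationarity for random dynamical systems recorded in \cite[Section 5.3]{V2014}: a probability measure $\mu$ on $[-1,1]$ has the property that $\nu \times \mu$ is $F$-invariant if and only if $\mu$ is fixed by the one-step Markov averaging operator
\[
\mu \mapsto \int_{\mathbb{R}} \mu\bigl(f_y^{-1}(\,\cdot\,)\bigr) \, d\rho_\sigma(y), \qquad f_y(x) := \tau(T(x) + y).
\]
Thus the entire content of the lemma is the verification that the annealed transfer operator $L_\sigma$ defined above coincides with this averaging operator; once that identity is established, a fixed point of $L_\sigma$ is exactly a measure satisfying the one-step invariance, and stationarity of $\mu_\sigma$ follows at once.

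First I would fix a Borel set $A \subseteq [-1,1)$ and unfold, using the definitions of $\tau_*$ and of the convolution,
\[
L_\sigma \mu(A) = \tau_*\bigl(\rho_\sigma * \widehat{L\mu}\bigr)(A) = \bigl(\rho_\sigma * \widehat{L\mu}\bigr)\bigl(\tau^{-1}(A)\bigr) = \int_{\mathbb{R}} \rho_\sigma(y)\, \widehat{L\mu}\bigl(\tau^{-1}(A) - y\bigr)\, dm(y).
\]
The next step is to use that $\widehat{L\mu}$ is supported on $[-1,1]$ (Lemma~\ref{c6}) to rewrite $\widehat{L\mu}(\tau^{-1}(A) - y) = L\mu(\{z \in [-1,1] : \tau(z+y) \in A\})$, the point being that $z$ lies in the translated preimage $\tau^{-1}(A) - y$ precisely when $\tau(z+y) \in A$. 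Finally, recalling from Definition~\ref{c3} that $L\mu$ is the pushforward $T_*\mu$, one has $L\mu(\{z : \tau(z+y) \in A\}) = \mu(\{x : \tau(T(x)+y) \in A\}) = \mu(f_y^{-1}(A))$, and substituting back yields
\[
L_\sigma\mu(A) = \int_{\mathbb{R}} \mu\bigl(f_y^{-1}(A)\bigr) \, d\rho_\sigma(y),
\]
which is exactly the averaging operator.

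The main point requiring care is the bookkeeping of the periodic boundary condition: the preimage $\tau^{-1}(A)$ is the union of all period translates $A + 2k$, $k \in \mathbb{Z}$, and I must check that intersecting this union with the support $[-1,1]$ of $\widehat{L\mu}$ after translating by $-y$ reproduces precisely the event $\{z : \tau(z+y) \in A\}$. An application of Tonelli's theorem is also needed to interchange the $y$-integration against $\rho_\sigma$ with the integration against $L\mu$; this is legitimate since all integrands are nonnegative and $\rho_\sigma$ is a probability density. With the operator identity in hand the conclusion is immediate: if $\mu_\sigma = L_\sigma \mu_\sigma$ then $\mu_\sigma$ is fixed by the one-step averaging operator, whence by \cite[Section 5.3]{V2014} the product $\nu \times \mu_\sigma$ is $F$-invariant, i.e.\ $\mu_\sigma$ is stationary for the random dynamical system.
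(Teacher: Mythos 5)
Your argument is correct. Note that the paper itself offers no proof of this lemma: it simply points to \cite[Section 5.3]{V2014} in the sentence introducing the statement, leaving implicit the identification of $L_\sigma$ with the one-step Markov averaging operator. Your proposal supplies exactly the missing verification. The chain $L_\sigma\mu(A) = \bigl(\rho_\sigma * \widehat{L\mu}\bigr)\bigl(\tau^{-1}(A)\bigr) = \int_{\mathbb{R}} \rho_\sigma(y)\,\widehat{L\mu}\bigl(\tau^{-1}(A)-y\bigr)\,dm(y)$ unfolds the definitions correctly, and the key bookkeeping step --- that $z \in \bigl(\tau^{-1}(A)-y\bigr)\cap[-1,1]$ if and only if $z\in[-1,1]$ and $\tau(z+y)\in A$, so that $\widehat{L\mu}\bigl(\tau^{-1}(A)-y\bigr) = \mu\bigl(f_y^{-1}(A)\bigr)$ with $f_y(x)=\tau(T(x)+y)$ --- is exactly right; the union of period translates $A+2k$ causes no double counting once intersected with the support of $\widehat{L\mu}$. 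The reduction to the standard equivalence (a measure fixed by $\mu\mapsto\int\mu(f_y^{-1}(\cdot))\,d\rho_\sigma(y)$ is precisely one for which $\nu\times\mu$ is $F$-invariant, using that $\nu=\rho_\sigma^{\mathbb{N}}$ is a shift-invariant product measure) is the standard fact the paper's citation is meant to cover, so invoking it is legitimate. The one cosmetic point is that you work with $A\subseteq[-1,1)$ while stationary measures live on $[-1,1]$; since $\tau$ takes values in $[-1,1)$, any $L_\sigma\mu$ assigns zero mass to $\{1\}$ and this causes no difficulty, but it is worth a sentence.
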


\begin{remark}
	If $P_{\sigma}$ is  the annealed Perron-Frobenius operator operating on densities, and $f_{\sigma}$ is a fixed point of this operator
	\begin{align*}
		P_{\sigma}f_{\sigma}=f_{\sigma}
	\end{align*}
	then $\mu_{\sigma}=f_{\sigma}d m $ (where $m$ is the Lebesgue measure) is a stationary measure.
\end{remark}

The discussion in subsection \ref{sec:rand_skew} can now be restated in the following theorem; we refer also to \cite[Proposition 5.4]{V2014}.
\begin{theorem}[Birkhoff Ergodic Theorem for random dynamical system]\label{c18}
	Suppose that $L_{\sigma}$ has a unique stationary measure $\mu_{\sigma}$ and let $\phi \in L^{1}(\mu_{\sigma})$. Then, for $\mu_{\sigma}$ almost every initial condition $X_0 = x_0$ and with probability $1$
	\begin{align*}
		\displaystyle \lim_{n\rightarrow +\infty}\frac{1}{n}\sum_{i=0}^{n-1}\phi(X_i)=\int \phi d\mu_{\sigma}.
	\end{align*}
\end{theorem}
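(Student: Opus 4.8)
The plan is to reduce the statement to the classical (deterministic) Birkhoff ergodic theorem applied to the skew product $F$ equipped with the invariant measure $\nu \times \mu_\sigma$, exactly along the lines of the proof of Lemma~\ref{c41}. First I would use Lemma~\ref{def:stationary} to promote the fixed point $\mu_\sigma$ of $L_\sigma$ to a stationary measure of the random system, so that $\nu \times \mu_\sigma$ is invariant under $F$. The hypothesis that $\mu_\sigma$ is the \emph{unique} stationary measure is exactly what makes the theorem work: by the ergodic decomposition for stationary measures \cite[Theorem 5.14]{V2014}, every stationary measure is a convex combination of ergodic stationary measures, and a measure that is the unique stationary measure admits no nontrivial such decomposition, hence must itself be ergodic. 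Consequently $\nu \times \mu_\sigma$ is ergodic with respect to $F$.

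Next I would lift the observable to the product space by setting $\Phi(\omega,x) := \phi(x)$, a function depending only on the second coordinate. Since $\phi \in L^1(\mu_\sigma)$ and $\nu$ is a probability measure, Fubini's theorem gives $\Phi \in L^1(\nu \times \mu_\sigma)$ with $\int \Phi \, d(\nu \times \mu_\sigma) = \int \phi \, d\mu_\sigma$. Writing $X_i = \mathrm{proj}_x(F^i(\omega,x_0))$ as in Section~\ref{sec:randskew}, we have $\Phi(F^i(\omega,x_0)) = \phi(X_i)$, so the Birkhoff sum of $\Phi$ along the $F$-orbit coincides with $\tfrac{1}{n}\sum_{i=0}^{n-1}\phi(X_i)$.

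Applying the classical Birkhoff ergodic theorem to the ergodic system $(F,\nu\times\mu_\sigma)$ and the observable $\Phi$ then yields, for $(\nu\times\mu_\sigma)$-almost every $(\omega,x_0)$,
\[
\lim_{n\to\infty}\frac{1}{n}\sum_{i=0}^{n-1}\phi(X_i)=\int \phi \, d\mu_\sigma.
\]
The only point requiring care is the translation from this joint almost-everywhere statement to the stated form ``for $\mu_\sigma$-almost every $x_0$ and with probability $1$'': here I would invoke Fubini's theorem once more, observing that a set of full $\nu\times\mu_\sigma$-measure has $\nu$-full sections for $\mu_\sigma$-almost every $x_0$. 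This last measure-theoretic bookkeeping is the only mild subtlety; the substantive content is carried entirely by the passage from uniqueness to ergodicity, which is the same mechanism already used in the proof of Lemma~\ref{c41}.
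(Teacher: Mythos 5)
Your proposal is correct and follows essentially the same route as the paper, which proves this statement via Lemma~\ref{c41} (uniqueness implies ergodicity via the ergodic decomposition, then the classical Birkhoff theorem applied to the skew product with the lifted observable) and a reference to \cite[Proposition 5.4]{V2014}. Your extra Fubini step translating the joint almost-everywhere statement into the sectioned form is a small refinement the paper leaves implicit, not a different argument.
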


\section{Regularization properties of the convolution}
In section \ref{c21}, we defined the operator $P_{\sigma}$. Our goal is to approximate this operator by a finite rank operator and use a matrix associated with this finite rank approximation to enclose a fixed point (section \ref{c23}). To achieve this, we will introduce and use the Fourier transform, its truncation and examine some important properties of the finite rank matrix obtained by projecting through truncation (Proposition \ref{f5}, Corollary \ref{thm:decay_fourier}), often called the Galerkin approximation.

Let $A$, $B$ be normed vector spaces and let $Q:A\rightarrow B$ be a linear operator.\\ We use the usual operator norm, defined by
\begin{align*}
	\|Q\|_{A\rightarrow B}=\sup_{f\in A, \|f\|_{A}\leq 1}\|Qf\|_{B}.
\end{align*}
\begin{definition}
	Define $N_{\sigma}:L^1\rightarrow L^{1}$ such that
	\begin{align*}
		N_{\sigma}f(x)=(\tau_*(\rho_{\sigma}*\widehat{f}))(x).
	\end{align*}
\end{definition}
We will see that the operator $N_{\sigma}$ relates the Perron-Frobenius operator $P$ of the dynamical system $T$ with the operator $P_{\sigma}$, defined in Lemma $\ref{c17}$, acting on densities.\\
Let $f\in L^1([-1,1])$. We have that
\begin{align}
	N_{\sigma}f(x) & =(\rho_{\sigma}*\widehat{f})(\tau^{-1}(x)) \nonumber \\ & =\displaystyle \sum _{j=-\infty}^{+\infty}\int_{-1}^{1}\rho_{\sigma}(x+2j-y)f(y)dy      \label{c49}.
\end{align}
Note that, if $f\in L^1([-1,1])$, then
\begin{align*}
	N_{\sigma}Pf(x)=\tau_*((\rho_{\sigma}*\widehat{Pf})(x))=P_{\sigma}f(x)
\end{align*}
and therefore $N_{\sigma}P=P_{\sigma}$.\\
We summarize some properties of the operator $N_{\sigma}$ in the next lemma.
\begin{lemma}\label{lem:regN}
Let $\rho_{\sigma}$ be the Gaussian density with mean $0$ and standard deviation $\sigma$ and $\tau$ be the periodic boundary condition and let
\[
N_{\sigma}f=(\tau_*(\rho_{\sigma}*\widehat{f})),
\]
for each function $f\in L^1(m)$.
Then 
\begin{enumerate}
	\item $N_{\sigma}f(x) = \int_{-1}^{1}(\tau_*\rho_{\sigma})(x-y)f(y)dy$, \label{item1}
	\item $N_{\sigma}$ is a bounded operator from $L^1\to L^{\infty}$,
	\item $||N_{\sigma}||_{L^1\rightarrow L^1}\leq 1$,
	\item $(N_{\sigma}f)'(x) = \int_{-1}^1 \tau_*(\rho'_{\sigma})(x-y)f(y)dy$,
	\item $N_{\sigma}$ is a bounded operator from $L^1\to BV([-1, 1])$,
	\item If \( f \geq 0 \), \( \int_{-1}^1 f = 1 \), then \( N_\sigma f(x) > 0 \) for all \( x \in [-1,1] \), and
\[
N_\sigma f(x) \geq \frac{1}{\sigma \sqrt{2\pi}} e^{-1/(2\sigma^2)} \quad \text{for all } x \in [-1,1].
\]
\end{enumerate}
\end{lemma}
\begin{proof} 
Recall that the periodization of a function $g\colon\mathbb R\to\mathbb R$ is
\[
(\tau_* g)(x):=\sum_{j\in\mathbb Z} g(x+2j),\qquad x\in[-1,1].
\]
Since $\rho_\sigma$ is rapidly decaying, the series defining $\tau_*\rho_\sigma$ converges absolutely and uniformly on $[-1,1]$.
Moreover, the same holds for the derivative series $\displaystyle \sum_{j\in\mathbb Z}\rho_\sigma'(x+2j)$, hence $\tau_*\rho_\sigma\in C^1([-1,1])$ and
\[
(\tau_*\rho_\sigma)'(x)=\sum_{j\in\mathbb Z}\rho_\sigma'(x+2j)=\tau_*(\rho_\sigma')(x).
\]

\smallskip\noindent
By the definition of convolution with the measure $\widehat f=f(y)\,dy$ supported on $[-1,1]$, we have for all $x\in\mathbb R$,
\[
(\rho_\sigma*\widehat f)(x)=\int_{-1}^1 \rho_\sigma(x-y)\,f(y)\,dy.
\]
Therefore, for $x\in[-1,1]$,
\begin{align*}
N_\sigma f(x)
=(\tau_*(\rho_\sigma*\widehat f))(x)
&=\sum_{j\in\mathbb Z} (\rho_\sigma*\widehat f)(x+2j)\\
&=\sum_{j\in\mathbb Z}\int_{-1}^1 \rho_\sigma(x+2j-y)\,f(y)\,dy\\
&=\int_{-1}^1 \Bigl(\sum_{j\in\mathbb Z}\rho_\sigma(x-y+2j)\Bigr)\,f(y)\,dy\\
&=\int_{-1}^1 (\tau_*\rho_\sigma)(x-y)\,f(y)\,dy.
\end{align*}
The interchange of sum and integral is justified by dominated convergence, since
$\displaystyle \sum_{j\in\mathbb Z}\rho_\sigma(x-y+2j)=(\tau_*\rho_\sigma)(x-y)\le \|\tau_*\rho_\sigma\|_\infty$
and $f\in L^1$.

From item (\ref{item1}),
\[
|N_\sigma f(x)|\le \int_{-1}^1 (\tau_*\rho_\sigma)(x-y)\,|f(y)|\,dy
\le \|\tau_*\rho_\sigma\|_\infty\,\|f\|_{L^1},
\]
hence $\|N_\sigma f\|_\infty\le \|\tau_*\rho_\sigma\|_\infty\|f\|_{L^1}$.

First note that $N_\sigma$ is positive. Moreover, for any $y\in[-1,1]$,
\[
\int_{-1}^1 (\tau_*\rho_\sigma)(x-y)\,dx
=\sum_{j\in\mathbb Z}\int_{-1}^1 \rho_\sigma(x-y+2j)\,dx
=\int_{\mathbb R}\rho_\sigma(u)\,du
=1.
\]
Thus for $g\ge 0$,
\[
\int_{-1}^1 N_\sigma g(x)\,dx
=\int_{-1}^1\int_{-1}^1 (\tau_*\rho_\sigma)(x-y)\,g(y)\,dy\,dx
=\int_{-1}^1 g(y)\,dy.
\]
For general $f$, positivity gives $|N_\sigma f|\le N_\sigma|f|$, hence
$\|N_\sigma f\|_{L^1}\le \|N_\sigma|f|\|_{L^1}=\|f\|_{L^1}$, i.e.\ $\|N_\sigma\|_{L^1\to L^1}\le 1$.

Since $\tau_*\rho_\sigma\in C^1$ and $(\tau_*\rho_\sigma)'=\tau_*(\rho_\sigma')$ is bounded, we may differentiate under the integral sign:
\[
(N_\sigma f)'(x)=\int_{-1}^1 (\tau_*\rho_\sigma)'(x-y)\,f(y)\,dy
=\int_{-1}^1 \tau_*(\rho_\sigma')(x-y)\,f(y)\,dy.
\]

As $N_\sigma f\in C^1([-1,1])$, it belongs to $BV([-1,1])$ and
\begin{align*}
\textrm{Var}(N_\sigma f)&=\int_{-1}^1 |(N_\sigma f)'(x)|\,dx\\
&\leq \int_{-1}^1\int_{-1}^1 |\tau_*(\rho_\sigma')|(x-y)\,|f(y)|\,dy\,dx
\leq \|\tau_*(\rho_\sigma')\|_{L^1}\,\|f\|_{L^1}.
\end{align*}
Consequently,
\[
\|N_\sigma f\|_{BV}\;\le\;\|N_\sigma f\|_{L^1}+\textrm{Var}(N_\sigma f)
\;\le\;\bigl(1+\|\tau_*(\rho_\sigma')\|_{L^1}\bigr)\,\|f\|_{L^1}.
\]
(One may also use the explicit bound $\|\tau_*(\rho_\sigma')\|_{L^1}\le \int_{\mathbb R}|\rho_\sigma'(u)|\,du
=\sqrt{\frac{2}{\pi}}\frac{1}{\sigma}$.)

Since $\tau_*\rho_\sigma$ is $2$--periodic and $\tau_*\rho_\sigma(t)\ge \rho_\sigma(t)$ for all $t\in\mathbb R$,
we have
\[
\inf_{t\in\mathbb R}\tau_*\rho_\sigma(t)
=\inf_{t\in[-1,1]}\tau_*\rho_\sigma(t)
\ge \inf_{t\in[-1,1]}\rho_\sigma(t)=\rho_\sigma(1)
=\frac{1}{\sigma\sqrt{2\pi}}e^{-1/(2\sigma ^2) }.
\]
If $f\ge 0$ and $\int_{-1}^1 f=1$, then by (\ref{item1}),
\[
N_\sigma f(x)=\int_{-1}^1 (\tau_*\rho_\sigma)(x-y)\,f(y)\,dy
\ge \Bigl(\inf_{t\in\mathbb R}\tau_*\rho_\sigma(t)\Bigr)\int_{-1}^1 f(y)\,dy
\ge \rho_\sigma(1),
\]
so in particular $N_\sigma f(x)>0$ for all $x\in[-1,1]$ and the claimed uniform lower bound holds.
\end{proof}

The fact that $N_{\sigma}$ is regularizing $L^1$ into $BV$ has important consequences

\begin{lemma}\label{lemma:existence}
Let $P_{\sigma}:L^1([-1,1])\rightarrow L^1([-1,1])$, $P_{\sigma} = N_{\sigma} P$ be the annealed Perron-Frobenius operator associated to the random dynamical system with Gaussian noise of variance $\sigma$.
\begin{itemize}
	\item $||P_{\sigma}||_{L^1\rightarrow L^1}\leq 1$,
	\item $P_{\sigma}$ maps $L^1([-1,1])$ into $BV([-1,1])$,
	\item $P_{\sigma}$ is a compact operator from $L^1([-1,1])$ to $L^1([-1,1])$,
	\item $P_{\sigma}$ has at least a fixed point, i.e., there exists at least a stationary density. 
\end{itemize}
\end{lemma}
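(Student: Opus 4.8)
The plan is to exploit the factorization $P_\sigma = N_\sigma P$ together with the regularization properties of $N_\sigma$ collected in Lemma~\ref{lem:regN} and the standard fact that the Perron--Frobenius operator $P$ of the non-singular map $T$ is a positive $L^1$ contraction preserving mass, i.e. $\|Pf\|_{L^1}\le\|f\|_{L^1}$, $\int Pf = \int f$, and $Pf\ge 0$ whenever $f\ge 0$. Granting this, the first two items are immediate. For the contraction bound, submultiplicativity of the mixed norm gives $\|P_\sigma\|_{L^1} = \|N_\sigma P\|_{L^1}\le \|N_\sigma\|_{L^1}\,\|P\|_{L^1}\le 1$, using item (3) of Lemma~\ref{lem:regN}. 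For the $BV$ claim, $P$ sends $L^1$ into $L^1$ and, by item (5) of Lemma~\ref{lem:regN}, $N_\sigma$ maps $L^1$ boundedly into $BV([-1,1])$, so the composition $P_\sigma = N_\sigma P$ lands in $BV([-1,1])$.

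The core is compactness, and I expect this to be the main obstacle in the sense that everything else is formal bookkeeping while this step carries the genuine analytic content: the smoothing of the Gaussian convolution (quantified through the uniform derivative bound established in the proof of Lemma~\ref{lem:regN}) is precisely what upgrades the merely bounded operator $P$ to a compact one. I would route $P_\sigma$ through $BV$ and invoke the compact embedding $BV([-1,1])\hookrightarrow L^1([-1,1])$ (Helly's selection principle: a norm-bounded family in $BV$ on a bounded interval is relatively compact in $L^1$). Concretely, $P_\sigma$ factors as $L^1\xrightarrow{P}L^1\xrightarrow{N_\sigma}BV([-1,1])\hookrightarrow L^1$, where the first two arrows are bounded and the last is compact; since a bounded operator followed by a compact one is compact, $P_\sigma:L^1\to L^1$ is compact.

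Finally, for the existence of a fixed point I would apply Schauder's theorem on the set of probability densities $\mathcal{D} = \{f\in L^1 : f\ge 0,\ \int_{-1}^1 f = 1\}$, which is convex and closed in $L^1$. Since $P$ preserves positivity and mass, and $N_\sigma$ does as well (items (3) and (6) of Lemma~\ref{lem:regN}), $P_\sigma$ maps $\mathcal{D}$ into itself. Because $\mathcal{D}$ is bounded and $P_\sigma$ is compact, $P_\sigma(\mathcal{D})$ is relatively compact; setting $K := \overline{\mathrm{conv}}\,(P_\sigma(\mathcal{D}))$, Mazur's theorem makes $K$ compact, and $K\subseteq\mathcal{D}$ since $\mathcal{D}$ is closed, convex, and contains $P_\sigma(\mathcal{D})$. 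Thus $P_\sigma:K\to K$ is a continuous self-map of a nonempty compact convex set, and Schauder's fixed point theorem produces $f_\sigma\in K\subseteq\mathcal{D}$ with $P_\sigma f_\sigma = f_\sigma$. As an alternative avoiding Schauder, one can take Cesàro averages $f_n = \tfrac1n\sum_{k=0}^{n-1}P_\sigma^k f_0$ of an arbitrary density, observe the telescoping bound $\|P_\sigma f_n - f_n\|_{L^1} = \tfrac1n\|P_\sigma^n f_0 - f_0\|_{L^1}\le \tfrac{2}{n}\to 0$, and extract an $L^1$-convergent subsequence from the relatively compact family $\{f_n\}$; its limit is the stationary density. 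In either route the fixed point is automatically a genuine probability density because $\mathcal{D}$ is $L^1$-closed.
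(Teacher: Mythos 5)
Your proposal is correct and follows essentially the same route as the paper: the $L^1$ contraction via $\|N_\sigma\|_{L^1}\le 1$ and $\|P\|_{L^1}\le 1$, the $BV$ bound from the regularization of $N_\sigma$, compactness via the compact embedding $BV([-1,1])\hookrightarrow L^1([-1,1])$, and existence of a fixed point by a Krylov--Bogolyubov Ces\`aro-average argument (which the paper uses with $f_0=\chi_{[-1,1]}$; your primary Schauder route is a valid, slightly heavier alternative that the paper does not need).
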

\begin{proof}
By Lemma \ref{lem:regN} we have that 
\[
||P_{\sigma} f||_{L^1} = ||N_{\sigma} Pf||_{L^1}\leq ||N_{\sigma}||_{L^1\rightarrow L^1}||Pf||_{L^1}\leq ||f||_{L^1}
\]
since $||P||_{L^1\rightarrow L^1}\leq 1$, as it is the Perron-Frobenius operator of a non-singular transformation.

Moreover,
\[
||(P_{\sigma}f)'||_{L^1}\leq ||\tau_* \rho'_{\sigma}||_{L^{\infty}}||Pf||_{L^1}\leq ||\tau_* \rho'_{\sigma}||_{L^{\infty}}||f||_{L^1}  
\]
therefore $P_{\sigma}$ maps a bounded sequence in $L^1$ into a bounded sequence in $BV$, which proves that \( P_\sigma \) maps bounded sets in \( L^1 \) into bounded subsets of \( BV \). 
Since the inclusion \( BV([-1,1]) \hookrightarrow L^1([-1,1]) \) is compact, this implies that \( P_\sigma : L^1 \to L^1 \) is compact.

From this same compactness inclusion property, we have then, following the classical Krylov-Bogolyubov argument (see e.g.  \cite{KrylovBogolyubov1937} or \cite{1989DsI}) that the sequence 
\[
f_N = \frac{1}{N}\sum_{j=0}^{N-1} P_{\sigma}^j \chi_{[-1,1]}
\]
has a subsequence that converges in \( L^1 \) to a fixed point of \( P_\sigma \), i.e., a stationary density.
\end{proof}
\begin{remark}
The Krylov-Bogolyubov argument ensures the existence of a fixed point, but does not guarantee uniqueness.
\end{remark}

\begin{definition}\label{def:average0} Let
\begin{align*}
	\mathcal{U}_0=\{f\in L^2([-1,1])|\int fdm=0\}.
\end{align*}
We call $\mathcal{U}_0$ the vector subspace of average $0$ functions. We say $P_{\sigma}$ \textbf{contracts the space of average $0$ functions in} $L^2$ if
\begin{align*}
	\|P_{\sigma}^n|_{\mathcal{U}_0}\|_{L^2\rightarrow L^2}\leq C \theta^n
\end{align*}
for constants $C>0$, $0<\theta<1$.
\end{definition}

We prove now a version of Doeblin's argument adapted to our discussion with densities and operators.
\begin{proposition} \label{prop:Doeblin}
	Suppose that there exists $c>0$ such that for every density $f$ we have $P_{\sigma}f>c$. Then
	\begin{align*}
		\|P_{\sigma}^n |_{\mathcal{U}_0}\|_{L^1\rightarrow L^1}\leq (1-2c)^{n}.
	\end{align*}
\end{proposition}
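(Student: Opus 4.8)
The plan is to establish the contraction first for $n=1$ and then bootstrap to arbitrary $n$ by submultiplicativity of the operator norm. The key preliminary observation is that $P_\sigma$ preserves integrals: it is the composition $N_\sigma P$ of two integral-preserving operators, so $\int f\,dm = 0$ forces $\int P_\sigma f\,dm = 0$. Hence $P_\sigma(\mathcal{U}_0)\subseteq \mathcal{U}_0$, and it suffices to bound $\|P_\sigma|_{\mathcal{U}_0}\|_{L^1\to L^1}$ and raise it to the $n$-th power.

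For the single-step bound I would take $f\in\mathcal{U}_0$ and split it into positive and negative parts $f = f^+ - f^-$. Since $\int f\,dm = 0$, both parts carry the same mass $a := \int f^+\,dm = \int f^-\,dm$, and $\|f\|_{L^1} = 2a$. If $a=0$ the claim is trivial, so assume $a>0$ and set $g^\pm := f^\pm/a$, which are genuine densities. Writing $u := P_\sigma g^+$ and $v := P_\sigma g^-$, the hypothesis that $P_\sigma$ sends every density above $c$ gives $u > c$ and $v > c$ pointwise, while $\int u\,dm = \int v\,dm = 1$.

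The heart of the estimate is the elementary identity $|u - v| = u + v - 2\min(u,v)$, which upon integration yields $\|u-v\|_{L^1} = 2 - 2\int_{-1}^1 \min(u,v)\,dm$. Since $\min(u,v) > c$ on $[-1,1]$ and the interval has Lebesgue measure $2$, we get $\int_{-1}^1 \min(u,v)\,dm \geq 2c$, hence $\|u-v\|_{L^1} \leq 2 - 4c$. By linearity $P_\sigma f = a(u-v)$, so multiplying back and using $\|f\|_{L^1}=2a$ gives $\|P_\sigma f\|_{L^1} = a\|u-v\|_{L^1} \leq a(2-4c) = (1-2c)\|f\|_{L^1}$, that is, $\|P_\sigma|_{\mathcal{U}_0}\|_{L^1\to L^1}\leq 1-2c$.

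Finally, since $P_\sigma$ maps $\mathcal{U}_0$ into itself, I would iterate to obtain $\|P_\sigma^n|_{\mathcal{U}_0}\|_{L^1\to L^1} \leq \|P_\sigma|_{\mathcal{U}_0}\|_{L^1\to L^1}^{\,n} \leq (1-2c)^n$. The only point requiring real care is the bookkeeping of the two factors of $2$: one comes from $|[-1,1]| = 2$, so the pointwise minorization $\min(u,v)>c$ contributes mass at least $2c$; the other comes from the normalization $\|f\|_{L^1} = 2a$. Together these are exactly what convert the pointwise lower bound $c$ into the contraction rate $1-2c$, and the minorization constant $c$ itself is supplied by Lemma~\ref{lem:regN}(6).
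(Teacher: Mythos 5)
Your proof is correct and follows essentially the same route as the paper: decompose a zero-average function into positive and negative parts of equal mass, use the Doeblin minorization $P_\sigma(\text{density})>c$ to show the two images overlap by mass at least $2c$, deduce the one-step contraction $1-2c$, and iterate. The only cosmetic difference is that you extract the overlap via the identity $|u-v|=u+v-2\min(u,v)$, whereas the paper subtracts the constant $c/2$ from each image and drops the absolute values; the bookkeeping of the factors of $2$ is handled correctly in both.
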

\begin{proof}
From Lemma~\ref{c17}, we know that \( P_\sigma \) acts on densities. Let \( h \in \mathcal{U}_0 \) with \( \|h\|_{L^1} = 1 \). Then \( h \) can be written as \( h = f - g \), where \( f, g \geq 0 \), \( \int f \, dm = \int g \, dm \), and \( \|f\|_{L^1} + \|g\|_{L^1} = 1 \). Hence, \( \|f\|_{L^1} = \|g\|_{L^1} = \frac{1}{2} \).

By hypothesis, since \( f \) and \( g \) are densities with mass \( \frac{1}{2} \), we have
\[
P_\sigma f \geq \frac{c}{2}, \quad P_\sigma g \geq \frac{c}{2} \quad \text{almost everywhere}.
\]

Since \( P_\sigma \) is a linear operator, we have
\[
P_\sigma h = P_\sigma f - P_\sigma g,
\]
and thus
\begin{align*}
\|P_\sigma h\|_{L^1} 
&\leq \left\|P_\sigma f - \frac{c}{2} \right\|_{L^1} + \left\|P_\sigma g - \frac{c}{2} \right\|_{L^1} \\
&= \int_{[-1,1]} \left|P_\sigma f - \frac{c}{2} \right| dm + \int_{[-1,1]} \left|P_\sigma g - \frac{c}{2} \right| dm.
\end{align*}

Since \( P_\sigma f \geq \frac{c}{2} \) and \( P_\sigma g \geq \frac{c}{2} \), we may drop the absolute values:
\begin{align*}
\|P_\sigma h\|_{L^1}
&= \int_{[-1,1]} \left(P_\sigma f - \frac{c}{2} \right) dm + \int_{[-1,1]} \left(P_\sigma g - \frac{c}{2} \right) dm \\
&= \|P_\sigma f\|_{L^1} - c + \|P_\sigma g\|_{L^1} - c \\
&\leq \frac{1}{2} - c + \frac{1}{2} - c = 1 - 2c.
\end{align*}

This shows that
\[
\|P_\sigma|_{\mathcal{U}_0}\|_{L^1 \to L^1} \leq 1 - 2c,
\]
and by iteration,
\[
\|P_\sigma^n|_{\mathcal{U}_0}\|_{L^1 \to L^1} \leq (1 - 2c)^n,
\]
as claimed.
\end{proof}

\begin{proposition} \label{prop:mixinggaussian}
Let \( \rho_{\sigma} \) be the Gaussian noise kernel. Then
\[
\|P_{\sigma}^n |_{\mathcal{U}_0}\|_{L^2 \to L^2} \leq C \theta^n,
\]
where \( C = \left(\sqrt{2} + 3\sqrt{2}\rho_{\sigma}(0)\right)^{1/2} \) and \( \theta = (1 - 2c)^{1/2} \in (0,1) \).
\end{proposition}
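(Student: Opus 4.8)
The plan is to upgrade the $L^1$ contraction of Proposition~\ref{prop:Doeblin} to an $L^2$ contraction, paying for the change of norm with the smoothing property $N_\sigma\colon L^1\to L^2$ coming from Lemma~\ref{lem:regN}. First I would verify the hypothesis of Proposition~\ref{prop:Doeblin} and identify $c$: for any density $f$, the function $Pf$ is again a density (since $P$ is the Perron--Frobenius operator of a non-singular map), so item~(6) of Lemma~\ref{lem:regN} applied to $Pf$ gives $P_\sigma f=N_\sigma(Pf)\ge \rho_\sigma(1)=\frac{1}{\sigma\sqrt{2\pi}}e^{-1/(2\sigma^2)}$ pointwise. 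Hence the Doeblin hypothesis holds with $c=\rho_\sigma(1)>0$. Because both $P$ and $N_\sigma$ preserve the integral (see the computation in the proof of Lemma~\ref{lem:regN}(3)), the operator $P_\sigma$ maps $\mathcal U_0$ into itself, so Proposition~\ref{prop:Doeblin} yields $\|P_\sigma^{m}h\|_{L^1}\le(1-2c)^{m}\|h\|_{L^1}$ for every $h\in\mathcal U_0$. This is the origin of the factor $\theta=(1-2c)^{1/2}$.

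For the norm change I would use the periodic Young inequality $\|N_\sigma g\|_{L^2}\le\|\tau_*\rho_\sigma\|_{L^2}\,\|g\|_{L^1}$, which is legitimate because, by Lemma~\ref{lem:regN}(1), $N_\sigma$ is convolution on the circle $\mathbb R/2\mathbb Z$ by the periodized kernel $\tau_*\rho_\sigma$. Peeling one smoothing step off $P_\sigma^{n}=N_\sigma P\circ P_\sigma^{n-1}$ and using $\|P\|_{L^1}\le1$ together with the previous step gives
\[
\|P_\sigma^{n}h\|_{L^2}\le \|\tau_*\rho_\sigma\|_{L^2}\,\|P_\sigma^{n-1}h\|_{L^1}\le \|\tau_*\rho_\sigma\|_{L^2}\,(1-2c)^{n-1}\,\|h\|_{L^1}.
\]
Converting $\|h\|_{L^1}\le\sqrt2\,\|h\|_{L^2}$ (Cauchy--Schwarz on $[-1,1]$, of length $2$) then produces a geometric $L^2$ bound, which I would record as $C\theta^{n}$ with $C=\sqrt2\,\|\tau_*\rho_\sigma\|_{L^2}$. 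Routing through $L^1$ is essential here: $P$ need not be bounded on $L^2$, so an $L^2\to L^\infty$ smoothing of the last step is unavailable, and only the contraction $\|P\|_{L^1}\le1$ is at hand.

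It then remains to make $\|\tau_*\rho_\sigma\|_{L^2}$ explicit and recover the stated constant. Writing $\tau_*\rho_\sigma(x)=\sum_{j}\rho_\sigma(x+2j)$ and unfolding the square over the tiling $\{x+2j\}_{j\in\mathbb Z}$ of $\mathbb R$, I obtain
\[
\|\tau_*\rho_\sigma\|_{L^2}^2=\sum_{d\in\mathbb Z}\int_{\mathbb R}\rho_\sigma(u)\,\rho_\sigma(u+2d)\,du=\sum_{d\in\mathbb Z}\rho_{\sigma\sqrt2}(2d),
\]
using $\rho_\sigma*\rho_\sigma=\rho_{\sigma\sqrt2}$. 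Since $\rho_{\sigma\sqrt2}(2d)=\frac1{\sqrt2}\rho_\sigma(0)\,e^{-d^2/\sigma^2}$, this equals $\frac1{\sqrt2}\rho_\sigma(0)\sum_{d}e^{-d^2/\sigma^2}$. The integral comparison $2\sum_{d\ge1}e^{-d^2/\sigma^2}\le 2\int_0^\infty e^{-t^2/\sigma^2}\,dt=\sigma\sqrt\pi$, together with $\rho_\sigma(0)\,\sigma\sqrt\pi=\frac1{\sqrt2}$, gives $\rho_\sigma(0)\sum_{d}e^{-d^2/\sigma^2}\le \rho_\sigma(0)+\frac1{\sqrt2}$, whence $\|\tau_*\rho_\sigma\|_{L^2}^2\le \frac1{\sqrt2}\rho_\sigma(0)+\frac12\le\frac1{\sqrt2}\bigl(1+3\rho_\sigma(0)\bigr)$ and therefore $C^2=2\|\tau_*\rho_\sigma\|_{L^2}^2\le \sqrt2+3\sqrt2\,\rho_\sigma(0)$, as claimed.

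I expect the kernel estimate of the last paragraph to be the only genuinely computational point, and the main thing to be careful about: controlling the periodized Gaussian (the theta-type series) uniformly in $\sigma$ through the integral comparison, while tracking the $\rho_\sigma(0)$ factors exactly, is what produces the clean closed form of $C$. A secondary bookkeeping point is that peeling one smoothing step leaves the Doeblin exponent at $n-1$; since $(1-2c)^{n-1}=\theta^{2n-2}\le\theta^{n}$ for $n\ge2$, the resulting estimate is correctly of the stated form $C\theta^{n}$ in the relevant mixing regime, the $n=1$ case being covered directly by the one-step smoothing bound.
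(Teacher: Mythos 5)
Your argument is correct in substance but takes a genuinely different route from the paper's. The paper keeps all $n$ applications of the Doeblin $L^1$ contraction and converts to $L^2$ only at the end by interpolation, $\|P_\sigma^n h\|_{L^2}^2\le\|P_\sigma^n h\|_{L^1}\,\|P_\sigma^n h\|_{L^\infty}$, bounding the $L^\infty$ factor by $\|\tau_*\rho_\sigma\|_{L^\infty}\|P(P_\sigma^{n-1}h)\|_{L^1}\le(1+3\rho_\sigma(0))\|h\|_{L^1}$; taking the square root spreads the $L^1$ decay into exactly $(1-2c)^{n/2}=\theta^n$ for every $n\ge1$. You instead peel off the final smoothing step and use the $L^1\to L^2$ bound $\|N_\sigma g\|_{L^2}\le\|\tau_*\rho_\sigma\|_{L^2}\|g\|_{L^1}$, which costs one Doeblin step and leaves $(1-2c)^{n-1}=\theta^{2n-2}$. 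This is actually sharper for $n\ge 3$ (you retain the full $L^1$ rate rather than its square root), and your theta-series estimate $2\|\tau_*\rho_\sigma\|_{L^2}^2\le\sqrt2\,(1+3\rho_\sigma(0))$ correctly recovers the paper's constant $C$; incidentally, your exact evaluation $\|\tau_*\rho_\sigma\|_{L^2}^2=\tfrac{1}{\sqrt2}\rho_\sigma(0)\sum_{d}e^{-d^2/\sigma^2}$ shows that the equality asserted in Lemma~\ref{c54} holds only up to the factor $\sum_d e^{-d^2/\sigma^2}>1$, so your upper bound is the version one should actually use.

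The one point that does not close as written is $n=1$. Your one-step bound gives $\|P_\sigma h\|_{L^2}\le C\|h\|_{L^2}$, i.e.\ $C\theta^0$, whereas the proposition asserts $C\theta^1$ with $\theta<1$; your closing claim that the $n=1$ case is ``covered directly by the one-step smoothing bound'' is therefore not accurate, since that bound carries no factor of $\theta$. To extract a $\theta$ at $n=1$ one must exploit the zero-average structure inside the single smoothing step, which is precisely what the paper's $L^1$--$L^\infty$ interpolation accomplishes. The defect is harmless for every downstream use (uniqueness and the a priori mixing estimates only need geometric decay, and replacing $C$ by $C/\theta$ repairs the statement for all $n$), but as written your proof establishes the stated inequality only for $n\ge2$.
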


\begin{proof}
From the positivity of the Gaussian kernel and the definition of \( P_\sigma \), from Lemma \ref{lem:regN} we have:
\[
P_\sigma f(x) = \int_{-1}^1 \sum_{k\in \mathbb{Z}} \rho_\sigma(x + 2k - y) P f(y)\,dy \geq c := \frac{1}{\sigma \sqrt{2\pi}} e^{-1/(2\sigma^2)},
\]
for all densities \( f \). Thus, Proposition~\ref{prop:Doeblin} applies and yields:
\[
\|P_\sigma^n h\|_{L^1} \leq (1 - 2c)^n \|h\|_{L^1}, \quad \forall h \in \mathcal{U}_0.
\]

Now estimate the \( L^2 \) norm using interpolation. Due to typographical limitations, we use the notation $(\cdot)^\wedge$ instead of $\widehat{\cdot}$ for long expressions such as $P_\sigma(P_\sigma^{n-1}h)$:
\begin{align}
\|P_\sigma^n h\|_{L^2}^2 
&\leq \|P_\sigma^n h\|_{L^1} \cdot \|P_\sigma^n h\|_{L^\infty} \nonumber \\
&= \|P_\sigma^n h\|_{L^1} \cdot \|P_\sigma(P_\sigma^{n-1} h)\|_{L^\infty} \nonumber \\
&\leq \|P_\sigma^n h\|_{L^1} \cdot \| \tau_* \rho_\sigma * (P(P_\sigma^{n-1} h))^\wedge \|_{L^\infty} \nonumber \\
&\leq \|P_\sigma^n h\|_{L^1} \cdot \|\tau_* \rho_\sigma\|_{L^\infty} \cdot \|P(P_\sigma^{n-1} h)\|_{L^1} \nonumber \\
&\leq (1 - 2c)^n \cdot (1 + 3\rho_\sigma(0)) \cdot \|h\|_{L^1}, \label{eq:L2bound}
\end{align}
where the last inequality uses Lemma \ref{lem:regN} and that \( \|P\|_{L^1 \to L^1} \leq 1 \).

Now normalize and pass to the operator norm:
\[
\|P_\sigma^n|_{\mathcal{U}_0}\|_{L^2 \to L^2} = \sup_{\substack{h \in \mathcal{U}_0\\ \|h\|_{L^2} \leq 1}} \|P_\sigma^n h\|_{L^2}
\leq \sup_{\|h\|_{L^2} \leq 1} \left((1 - 2c)^n (1 + 3\rho_\sigma(0)) \|h\|_{L^1} \right)^{1/2}.
\]
Using \( \|h\|_{L^1} \leq \sqrt{2} \|h\|_{L^2} \) for \( h \in \mathcal{U}_0 \subset L^2 \), we obtain:
\[
\|P_\sigma^n|_{\mathcal{U}_0}\|_{L^2 \to L^2} \leq (1 - 2c)^{n/2} \left(\sqrt{2} + 3\sqrt{2} \rho_\sigma(0) \right)^{1/2},
\]
as claimed. Since \(c>0 \), it follows that \( 0 < \theta = \sqrt{1 - 2c} < 1 \).
\end{proof}

In Proposition \ref{prop:mixinggaussian}, we demonstrated that $P_{\sigma}$ contracts the space of average $0$ functions in  $L^2$. We can then prove that $P_{\sigma}$ has a unique fixed point, as shown in the following proposition.
\begin{proposition}\label{prop:unique}
	If $P_{\sigma}$ contracts the space of average $0$ functions in  $L^2$, then $P_{\sigma}$ has a unique fixed point.
\end{proposition}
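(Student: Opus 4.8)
The plan is to argue by contradiction using the linearity and positivity of $P_\sigma$ together with the $L^2$ contraction on $\mathcal{U}_0$. Suppose $f_1$ and $f_2$ are two stationary densities, i.e.\ two fixed points of $P_\sigma$ that are probability densities. Their difference $h := f_1 - f_2$ satisfies $\int_{-1}^1 h \, dm = 1 - 1 = 0$, so $h \in \mathcal{U}_0$, and by linearity $P_\sigma h = P_\sigma f_1 - P_\sigma f_2 = f_1 - f_2 = h$, so $h$ is itself an average-zero fixed point and $P_\sigma^n h = h$ for all $n$.

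First I would check that $h$ actually lies in $L^2$, so that the contraction hypothesis can be invoked. This is where the regularization established in Lemma~\ref{lemma:existence} enters: any fixed point of $P_\sigma$ lies in the image of $P_\sigma$, hence in $BV([-1,1])$; since $[-1,1]$ is bounded, $BV([-1,1]) \hookrightarrow L^\infty([-1,1]) \hookrightarrow L^2([-1,1])$, so $f_1, f_2 \in L^2$ and therefore $h \in \mathcal{U}_0 \subset L^2$.

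Then I would apply the contraction hypothesis directly: there exist $C > 0$ and $0 < \theta < 1$ with $\|P_\sigma^n|_{\mathcal{U}_0}\|_{L^2 \to L^2} \leq C\theta^n$. Since $P_\sigma^n h = h$, this gives $\|h\|_{L^2} = \|P_\sigma^n h\|_{L^2} \leq C\theta^n \|h\|_{L^2}$ for every $n$. Letting $n \to \infty$ forces $\|h\|_{L^2} = 0$, hence $h = 0$ and $f_1 = f_2$. Combined with the existence of a stationary density from Lemma~\ref{lemma:existence}, this yields a unique fixed point. The same computation applied to $f - \left(\int_{-1}^1 f\, dm\right) f_0$, for an arbitrary $L^2$ fixed point $f$ and the stationary density $f_0$, shows more generally that the space of fixed points is one-dimensional.

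The argument is short; the only point that requires care, and the main obstacle, is confirming that the candidate fixed points, and hence their difference, really belong to $L^2$ so that the $L^2$ contraction applies. This is not automatic for a general fixed point of an operator on $L^1$, but is guaranteed here by the smoothing property $P_\sigma(L^1) \subset BV$ established earlier, which lifts every fixed point into $BV \subset L^2$.
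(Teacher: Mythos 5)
Your proposal is correct and follows essentially the same route as the paper: take the difference of two fixed densities, note it lies in $\mathcal{U}_0$ and is fixed by $P_\sigma^n$, and let the contraction $\|P_\sigma^n|_{\mathcal{U}_0}\|_{L^2\to L^2}\le C\theta^n$ force the difference to vanish. Your explicit verification that the fixed points lie in $L^2$ (via $P_\sigma(L^1)\subset BV\subset L^\infty\subset L^2$) is a point the paper's proof leaves implicit, and is a welcome addition rather than a deviation.
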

\begin{proof}
	We prove by contradiction. Let $\mu$ and $\nu$ be stationary measures. Since $L_{\sigma}\mu=\mu$ and $L_{\sigma}\nu=\nu$ we have that $\mu$ and $\nu$ are absolutely continuous with respect to Lebesgue measure (Lemma \ref{c16}), with densities $f$ and $g$ respectively. Now, $P_{\sigma}f=f$ and $P_{\sigma}g=g$, and since $P_{\sigma}$ contracts the space of average $0$ functions, we have that for any $n$
	\begin{align*}
		\|f-g\|_{L^2} & =\|P^{n}_{\sigma}f-P^{n}_{\sigma}g\|_{L^2} \\
		              & =\|P^{n}_{\sigma}(f-g)\|_{L^2}             \\
		              & \leq C\theta^n\|f-g\|_{L^2}.
	\end{align*}
	Take $N$ such that $C\theta^N<1$. The inequality above implies that $\|f-g\|_{L^1}=0$, which in turn implies that $\mu=\nu$. Therefore, $L_{\sigma}$ has a unique stationary measure, it follows that $P_{\sigma}$ has a unique fixed point.
\end{proof}

\section{Fourier theory and Galerkin approximation}
In this section, we formalize the use of Fourier analysis to study the action of the smoothing operator \( N_\sigma \). We define appropriate Banach spaces of sequences (e.g., \( \ell^\infty \), \( \ell^2 \), and exponentially decaying sequences \( \ell^{\exp}_\sigma \)), and show how the Fourier transform maps functions on \( [-1,1] \) into these spaces.

We prove that \( N_\sigma \) is diagonalized by the Fourier basis, with eigenvalues given by a diagonal operator \( D_\sigma \) (Proposition~\ref{f5}), and that the entries of \( D_\sigma \) decay exponentially fast (Theorem~\ref{thm:decay_fourier}). This result justifies the use of Galerkin-type finite-dimensional approximations, since high-frequency modes are exponentially suppressed.

\begin{definition}
We define the \textbf{Fourier transform} on \( [-1,1] \) as the map
\[
\mathcal{F}(f)[k] := \frac{1}{2} \int_{-1}^{1} f(x) e^{-k\pi i x} \, dx,
\]
for all \( k \in \mathbb{Z} \). The number \( \mathcal{F}(f)[k] \) is called the \( k \)-th Fourier coefficient of \( f \).
\end{definition}

\begin{definition}
Let \( \ell^\infty \) denote the space of bounded bilateral sequences:
\[
\ell^\infty = \{ (a_k)_{k \in \mathbb{Z}} : |a_k| \leq C \text{ for some } C > 0 \}.
\]
\end{definition}

\begin{definition}
For \( \sigma > 0 \), define the space of \textbf{exponentially decaying sequences}:
\[
\ell^{\exp}_\sigma = \left\{ (a_k)_{k \in \mathbb{Z}} : |a_k| \leq C e^{-\frac{\sigma^2 |k|}{2}} \text{ for some } C > 0 \right\}.
\]
\end{definition}

\begin{definition}
Define \( \ell^2 \), the space of square-summable sequences:
\[
\ell^2 = \left\{ (a_k)_{k \in \mathbb{Z}} : \sum_{k \in \mathbb{Z}} |a_k|^2 < \infty \right\},
\]
endowed with the norm \( \| (a_k) \|_{\ell^2} := \left( \sum |a_k|^2 \right)^{1/2} \). It forms a Hilbert space with respect 
to the inner product $<(a_k), (b_k)> = \sum a_k \bar{b}_k$. 
\end{definition}

\begin{lemma} \label{lem:fourier}
The Fourier transform \( \mathcal{F} \) on \( [-1,1] \) satisfies the following properties:
\begin{enumerate}
    \item \( \mathcal{F} : L^1([-1,1]) \to \ell^\infty \) is continuous with operator norm \( \| \mathcal{F} \| = 1 \). \label{it:continuity}
    \item \( \mathcal{F} \) is invertible on \( \ell^{\exp}_\sigma \), \label{it:analytic}
    \item \( \mathcal{F} : L^2([-1,1]) \to \ell^2 \) is an isometry, and the family \( \{ e^{-k\pi i x} \}_{k \in \mathbb{Z}} \) forms an orthonormal basis of \( L^2([-1,1]) \). \label{it:isometry}
\end{enumerate}
\end{lemma}
\begin{proof}
Proofs of items\ref{it:continuity} and \ref{it:isometry} can be found  in Theorem 9.6 and Theorem 9.13 of \cite{Rudin1987}, respectively. We only give a short proof of item \ref{it:analytic}. Since $\{a_k\}_{k\in \mathbb{Z}}$ is in $\ell^{\exp}_{\sigma}$, we have
	\begin{align*}
		\left|\displaystyle \sum _{k=-\infty}^{+\infty}a_k e^{\pi i kx} \right| & \leq \displaystyle \sum _{k=-\infty}^{+\infty}\left|a_k\right|<\infty,
	\end{align*}
	and from Weierstrass's M-test Theorem
	\begin{align*}
		\mathcal{F}^{-1}(\{a_k\})=\displaystyle \sum _{k=-\infty}^{+\infty}a_k e^{\pi i kx}
	\end{align*}
	converges uniformly to an analytic function.
\end{proof}

The operator $N_{\sigma}$ has an explicit representation that commutes with the Fourier transform.
\begin{proposition}\label{f5}
There exists an operator \( D_{\sigma} : \ell_\infty \to \ell_\infty \) such that
\[
D_{\sigma} \mathcal{F} = \mathcal{F} N_{\sigma}.
\]
\end{proposition}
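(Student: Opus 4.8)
The plan is to show that $N_\sigma$ is diagonalized by the Fourier basis, i.e., that each basis function $e^{k\pi i x}$ is (up to a scalar) an eigenfunction of $N_\sigma$, and to read off the eigenvalue as the $k$-th diagonal entry of $D_\sigma$. The cleanest route is to compute the Fourier coefficients of $N_\sigma f$ directly and express them in terms of the Fourier coefficients of $f$.

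**Key steps.** First I would use item (1) of Lemma~\ref{lem:regN}, namely the representation
\[
N_\sigma f(x) = \int_{-1}^{1} (\tau_* \rho_\sigma)(x - y) f(y)\, dy,
\]
which exhibits $N_\sigma$ as convolution on the torus $[-1,1]$ with the periodized kernel $\tau_*\rho_\sigma$. The point is that the periodic boundary condition $\tau$ turns the convolution on $\mathbb{R}$ into a genuine circular convolution, so the Fourier exponentials are eigenfunctions. Concretely, I would compute the $k$-th Fourier coefficient
\[
\mathcal{F}(N_\sigma f)[k] = \frac{1}{2} \int_{-1}^{1} N_\sigma f(x)\, e^{-k\pi i x}\, dx,
\]
substitute the convolution representation, and apply Fubini (justified, as in the lemma, by the uniform convergence of the partial sums $F_N$). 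The change of variables $x = y + u$ then factors the double integral into $\mathcal{F}(\tau_*\rho_\sigma)[k] \cdot \mathcal{F}(f)[k]$ times a harmless constant. Setting
\[
(D_\sigma a)_k := d_k\, a_k, \qquad d_k := 2\,\mathcal{F}(\tau_*\rho_\sigma)[k],
\]
gives a diagonal multiplier operator with $D_\sigma \mathcal{F} f = \mathcal{F} N_\sigma f$ for every $f \in L^1$, which is the claim. I would also record that $d_k = \int_{-1}^{1} (\tau_*\rho_\sigma)(x) e^{-k\pi i x}\,dx = \int_{\mathbb{R}} \rho_\sigma(x) e^{-k\pi i x}\,dx = e^{-\sigma^2 k^2 \pi^2/2}$, since unwrapping the periodization sum reassembles the full-line integral; this is exactly the Gaussian characteristic function and foreshadows the exponential decay of Theorem~\ref{thm:decay_fourier}, though for the present statement only the existence of $D_\sigma$ as an operator on $\ell^\infty$ is needed.

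**Main obstacle.** The one point requiring care is the interchange of summation (over the periodization index $j$) and integration, and the convergence of the resulting series defining the multiplier on $\ell^\infty$. By Lemma~\ref{lem:regN} the kernel $\tau_*\rho_\sigma$ is a bounded continuous function obtained as a uniformly convergent series, so its Fourier coefficients are well-defined and bounded; since $|d_k| \le \int_{\mathbb{R}} \rho_\sigma = 1$, the multiplication operator $D_\sigma$ maps $\ell^\infty$ into $\ell^\infty$ with norm at most $1$. Thus the genuine work is bookkeeping with Fubini and the periodization, all of which is already underwritten by the uniform-convergence estimates established in the proof of Lemma~\ref{lem:regN}; no new analytic difficulty arises.
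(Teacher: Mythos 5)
Your proposal is correct and follows essentially the same route as the paper: both compute the $k$-th Fourier coefficient of $N_\sigma f$, use Fubini (justified by the uniform convergence of the periodization partial sums) and a change of variables to factor it as a multiplier times $\mathcal{F}(f)[k]$, and identify that multiplier with the Gaussian Fourier transform $e^{-\sigma^2 k^2 \pi^2/2}$. The only cosmetic difference is that you invoke the circular-convolution representation from Lemma~\ref{lem:regN}(1) up front, whereas the paper re-expands the periodization sum inline; your added remark that $|d_k|\leq 1$ makes the $\ell^\infty\to\ell^\infty$ boundedness explicit, which the paper leaves implicit.
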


\begin{proof}
We compute the \( k \)-th Fourier coefficient of \( N_\sigma f \):
\begin{align*}
\mathcal{F}(N_\sigma f)[k] 
&= \frac{1}{2} \int_{-1}^{1} \left( \sum_{j \in \mathbb{Z}} \int_{-1}^1 \rho_\sigma(x + 2j - y) f(y) \, dy \right) e^{-k\pi i x} \, dx \\
&= \frac{1}{2}\sum_{j \in \mathbb{Z}} \int_{-1}^1 \int_{-1}^1 \rho_\sigma(x + 2j - y) f(y) e^{-k\pi i x} \, dy dx \\
&= \frac{1}{2}\sum_{j \in \mathbb{Z}} \int_{-1}^1 \int_{-1}^1 \rho_\sigma(z + 2j) f(y) e^{-k\pi i (z + y)} \, dz dy \quad (z = x - y) \\
&= \frac{1}{2}\left( \sum_{j \in \mathbb{Z}} \int_{-1}^1 \rho_\sigma(z + 2j) e^{-k\pi i z} \, dz \right) \cdot \mathcal{F}(f)[k].
\end{align*}

Define the diagonal operator \( D_\sigma \) on \( \ell_\infty \) by
\[
D_\sigma[k,k] :=  \frac{1}{2}\int_{-\infty}^{+\infty} \rho_\sigma(z) e^{-k\pi i z} \, dz,
\]
so that \( \mathcal{F}(N_\sigma f)[k] = D_\sigma[k,k] \cdot \mathcal{F}(f)[k] \). Hence,
\[
\mathcal{F}(N_\sigma f) = D_\sigma \mathcal{F}(f),
\]
as claimed.
\end{proof}

We observe now that the coefficients $D[k, k]$ can be obtained by a classical computation, i.e. that 
\[
\int_{-\infty}^{+\infty}\rho_{\sigma}(u)e^{-k\pi i u}du=e^{-\frac{\sigma^2(k\pi)^2}{2}}.
\]
This is the Fourier transform on $\mathbb{R}$ of a Gaussian \cite[Theorem 1.4, Chapter 5]{SteinShakarchi2003}
\begin{corollary}\label{f32} Let $D_{\sigma}$ from Proposition \ref{f5} then
	\begin{align*}
		D[k, k]= \int_{-\infty}^{+\infty}\rho_{\sigma}(z)e^{-k\pi i z}dz=e^{-\frac{\sigma^2\pi^2k^2}{2}}.
	\end{align*}
In particular, the entries $D[k, k]$ converge to $0$ exponentially fast as $|k|$ goes to infinity.
\end{corollary}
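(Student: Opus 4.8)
The plan is to reduce the statement to the classical evaluation of the Fourier transform of a Gaussian on $\mathbb{R}$. The first equality, $D[k,k] = \int_{-\infty}^{+\infty}\rho_\sigma(z)e^{-k\pi i z}\,dz$, is already furnished by Proposition~\ref{f5}: after the change of variables $z=x-y$ one is left with a factor $\sum_{j\in\mathbb{Z}}\int_{-1}^1\rho_\sigma(z+2j)e^{-k\pi i z}\,dz$, and since $e^{2jk\pi i}=1$ for all integers $j,k$, each summand is the integral of $\rho_\sigma(u)e^{-k\pi i u}$ over the interval $[2j-1,2j+1]$, so the sum reassembles into a single integral of $\rho_\sigma$ against $e^{-k\pi i u}$ over the whole real line. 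The genuine content is therefore the evaluation of this integral.

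Setting $\xi = k\pi$, I would compute $g(\xi) := \int_{-\infty}^{+\infty}\rho_\sigma(z)e^{-i\xi z}\,dz$ by the differential-equation method. First I differentiate under the integral sign—justified by the Gaussian tail bound, which supplies an integrable dominating function independent of $\xi$—to obtain $g'(\xi) = -i\int z\rho_\sigma(z)e^{-i\xi z}\,dz$. Using the elementary identity $z\rho_\sigma(z) = -\sigma^2\rho_\sigma'(z)$ and integrating by parts (the boundary terms vanish by Gaussian decay), this yields the first-order linear ODE $g'(\xi) = -\sigma^2\xi\,g(\xi)$. With the initial condition $g(0) = \int_{-\infty}^{+\infty}\rho_\sigma = 1$, the unique solution is $g(\xi) = e^{-\sigma^2\xi^2/2}$, and substituting $\xi = k\pi$ gives the claimed value $e^{-\sigma^2 k^2\pi^2/2}$.

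An alternative route I would keep in reserve is completing the square: one writes $-z^2/(2\sigma^2) - i\xi z = -(z+i\sigma^2\xi)^2/(2\sigma^2) - \sigma^2\xi^2/2$, factors out $e^{-\sigma^2\xi^2/2}$, and identifies the residual as the integral of a Gaussian over the shifted horizontal line $\mathbb{R}+i\sigma^2\xi$. The main obstacle in this variant is the contour-shift step: one must apply Cauchy's theorem on a tall rectangle and verify that the two vertical sides contribute nothing in the limit, which follows from the uniform decay of the entire Gaussian integrand on bounded horizontal strips. Either way the computation is standard and may be cited from \cite[Theorem 1.4, Chapter 5]{SteinShakarchi2003}.

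Finally, the concluding assertion that the entries $D[k,k]$ decay to $0$ exponentially fast as $|k|\to\infty$ is immediate from the closed form: for $|k|\ge 1$ one has $e^{-\sigma^2 k^2\pi^2/2}\le e^{-\sigma^2\pi^2|k|/2}\le e^{-\sigma^2|k|/2}$, which exhibits $(D[k,k])_{k\in\mathbb{Z}}$ as a member of $\ell^{\exp}_\sigma$ and completes the proof.
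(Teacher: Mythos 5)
Your proof is correct and follows essentially the same route as the paper, which simply identifies $D[k,k]$ with the Fourier transform of a Gaussian on $\mathbb{R}$ and cites \cite[Theorem 1.4, Chapter 5]{SteinShakarchi2003}; you additionally supply the reassembly of the periodized sum $\sum_{j}\int_{-1}^{1}\rho_\sigma(z+2j)e^{-k\pi i z}\,dz$ into a single integral over $\mathbb{R}$ and a self-contained ODE derivation of the classical formula, both of which are sound. The concluding decay estimate $e^{-\sigma^2 k^2\pi^2/2}\le e^{-\sigma^2|k|/2}$ for $|k|\ge 1$ is also correct and matches what Theorem~\ref{thm:decay_fourier} later uses.
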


\begin{theorem}\label{thm:decay_fourier}
Let \( \sigma > 0 \), and let \( D_{\sigma}:\ell_{\infty} \to \ell_{\infty} \) be the diagonal operator defined by
\[
D_{\sigma}[k, k] = \int_{-\infty}^{+\infty} \rho_{\sigma}(z) e^{-k\pi i z} \, dz = e^{-\frac{\sigma^2\pi^2 k^2 }{2}}.
\]
Then:
\begin{enumerate}
    \item The image of \( D_{\sigma} \) lies in the space of exponentially decaying sequences $\ell^{\exp}_{\sigma}$.
    \item In particular, for any \( f \in L^1([-1,1]) \), the Fourier coefficients of \( N_\sigma f \) satisfy
    \[
    \mathcal{F}(N_\sigma f) = D_\sigma \mathcal{F}(f) \in \ell^{\exp}_{\sigma},
    \]
    i.e., \( N_\sigma f \) has exponentially decaying Fourier coefficients.
\end{enumerate}
\end{theorem}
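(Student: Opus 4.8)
The plan is to reduce both items to the closed-form eigenvalue expression from Corollary~\ref{f32} together with a single elementary inequality on the exponents. The entire content of item (1) is that the Gaussian-type decay $e^{-\sigma^2 k^2 \pi^2/2}$ of the diagonal entries dominates the slower decay $e^{-\sigma^2 |k|/2}$ defining $\ell^{\exp}_{\sigma}$, uniformly in $k$, so that multiplication by a bounded sequence lands inside $\ell^{\exp}_{\sigma}$.

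First I would fix an arbitrary input $(a_k) \in \ell^{\infty}$, so that $|a_k| \leq M$ for some $M > 0$, and use Corollary~\ref{f32} to write the $k$-th entry of the image as $(D_{\sigma}(a_k))_k = e^{-\sigma^2 k^2 \pi^2/2}\, a_k$. The claim $D_{\sigma}(a_k) \in \ell^{\exp}_{\sigma}$ then amounts to exhibiting a constant $C$ with $e^{-\sigma^2 k^2 \pi^2/2} |a_k| \leq C e^{-\sigma^2 |k|/2}$ for every $k \in \mathbb{Z}$.

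The key step is the pointwise comparison $k^2 \pi^2 \geq |k|$ valid for all integers $k$. This is immediate: at $k = 0$ both sides vanish, while for $|k| \geq 1$ one has $k^2 \pi^2 = |k|^2 \pi^2 \geq |k| \pi^2 > |k|$ since $\pi^2 > 1$. Consequently $e^{-\sigma^2 k^2 \pi^2/2} \leq e^{-\sigma^2 |k|/2}$ for every $k$, and taking $C = M$ yields $|(D_{\sigma}(a_k))_k| \leq M e^{-\sigma^2 |k|/2}$, which is exactly membership in $\ell^{\exp}_{\sigma}$. This settles item (1).

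For item (2) I would simply chain item (1) with the mapping properties already in hand. By the Fourier lemma, $\mathcal{F}:L^1([-1,1]) \to \ell^{\infty}$ is bounded, so $\mathcal{F}(f) \in \ell^{\infty}$ for any $f \in L^1$; by Proposition~\ref{f5}, $\mathcal{F}(N_{\sigma} f) = D_{\sigma}\mathcal{F}(f)$; and applying item (1) to the sequence $\mathcal{F}(f)$ places $\mathcal{F}(N_{\sigma}f)$ in $\ell^{\exp}_{\sigma}$. I expect no genuine obstacle: the only quantitative input is the trivial inequality $k^2 \pi^2 \geq |k|$, and everything else is bookkeeping once the eigenvalues are known explicitly. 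The one point worth stating carefully is that the constant $C$ depends on the $\ell^{\infty}$-bound of the input (hence, in item (2), on $\|f\|_{L^1}$ via the operator norm of $\mathcal{F}$) but is independent of $k$, so that the exponential suppression of high-frequency modes is genuinely uniform.
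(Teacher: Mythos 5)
Your proposal is correct and follows essentially the same route as the paper: both reduce item (1) to the explicit diagonal entries $e^{-\sigma^2\pi^2 k^2/2}$ and the pointwise comparison $e^{-\sigma^2\pi^2 k^2/2}\leq e^{-\sigma^2|k|/2}$ applied to a bounded sequence, with item (2) following by combining Proposition~\ref{f5} with the boundedness of $\mathcal{F}:L^1\to\ell^\infty$. You in fact spell out the elementary inequality $k^2\pi^2\geq|k|$ and the chaining for item (2) slightly more explicitly than the paper does.
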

\begin{proof}
Let \( (b_k)_{k \in \mathbb{Z}} \in \ell_{\infty} \), so that \( |b_k| \leq C \) for all \( k \). Then
\[
D_{\sigma}(b_k) = (e^{-\frac{\sigma^2 \pi^2 k^2}{2}} b_k)_{k \in \mathbb{Z}}.
\]
Since \( e^{-\frac{\sigma^2 \pi^2 k^2}{2}} \leq e^{-\frac{\sigma^2 |k|}{2}} \), we have
\[
|D_{\sigma}(b_k)| \leq C e^{-\frac{\sigma^2 |k|}{2}},
\]
so \( D_{\sigma}(b_k) \in \ell^{\exp}_{\sigma} \), as claimed.
\end{proof}

\begin{corollary}\label{cor:analytic}
The stationary density \( f_{\sigma} \) is unique and real-analytic.
\end{corollary}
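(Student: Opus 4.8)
The plan is to combine the uniqueness result already obtained in Proposition~\ref{prop:unique} with the exponential decay of Fourier coefficients established in Theorem~\ref{thm:decay_fourier}. Uniqueness is immediate: Proposition~\ref{prop:mixinggaussian} shows that $P_\sigma$ contracts $\mathcal{U}_0$ in $L^2$, and Proposition~\ref{prop:unique} then gives a unique fixed point $f_\sigma$. It remains only to prove real-analyticity of this fixed point.

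The key observation is that the fixed point satisfies $f_\sigma = P_\sigma f_\sigma = N_\sigma (P f_\sigma)$. Since $P f_\sigma \in L^1([-1,1])$ (indeed $\|P f_\sigma\|_{L^1} \leq \|f_\sigma\|_{L^1} < \infty$), Theorem~\ref{thm:decay_fourier} applies with $f = P f_\sigma$, yielding
\[
\mathcal{F}(f_\sigma) = \mathcal{F}(N_\sigma (P f_\sigma)) = D_\sigma \, \mathcal{F}(P f_\sigma) \in \ell^{\exp}_\sigma.
\]
Thus the Fourier coefficients of $f_\sigma$ decay exponentially, i.e.\ $|\mathcal{F}(f_\sigma)[k]| \leq C e^{-\sigma^2 |k|/2}$ for some $C>0$.

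First I would invoke the inverse Fourier transform property: by item~\ref{it:analytic} of the Fourier lemma, any sequence in $\ell^{\exp}_\sigma$ is the Fourier transform of a function whose inverse series $\sum_k a_k e^{\pi i k x}$ converges uniformly to an analytic function. Applying this to $(a_k) = \mathcal{F}(f_\sigma)$, the uniform convergence of the exponentially-dominated series on any horizontal strip $|\operatorname{Im} z| < \sigma^2/(2\pi)$ shows that $f_\sigma$ extends to a holomorphic function on a complex neighborhood of $[-1,1]$, hence is real-analytic on $[-1,1]$. This last step is essentially immediate once the exponential decay is in hand.

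I do not expect any serious obstacle here, since both ingredients—uniqueness and exponential coefficient decay—are already fully established earlier in the paper; the corollary is genuinely a short consequence. The only point requiring a moment of care is ensuring the hypothesis of Theorem~\ref{thm:decay_fourier} is met, namely that the argument fed into $N_\sigma$ lies in $L^1([-1,1])$; this is guaranteed because $f_\sigma$ is a density and $P$ is an $L^1$-contraction, so $P f_\sigma \in L^1$. One should also note that exponential decay of Fourier coefficients is precisely the classical criterion for analyticity (a Paley--Wiener-type statement), so the inverse transform producing an analytic function is standard.
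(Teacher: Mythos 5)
Your proposal is correct and follows essentially the same route as the paper: uniqueness via Proposition~\ref{prop:unique}, then the fixed-point identity \( f_{\sigma} = N_{\sigma} P f_{\sigma} \) with \( P f_{\sigma} \in L^1 \) feeding into Theorem~\ref{thm:decay_fourier} to get exponentially decaying Fourier coefficients and hence real-analyticity. Your added remark about the strip of holomorphy is a slight elaboration on the paper's appeal to uniform convergence of the Fourier series, but the argument is the same.
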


\begin{proof}
By Proposition~\ref{prop:unique} the annealed Perron–Frobenius operator \( P_{\sigma} \) has a unique fixed point \( f_{\sigma} \) in \( L^1([{-1},1]) \), corresponding to the unique stationary density of the random dynamical system.

Moreover, since \( f_{\sigma} = P_{\sigma} f_{\sigma} = N_{\sigma} P f_{\sigma} \), and \( P f_{\sigma} \in L^1 \), Theorem~\ref{thm:decay_fourier} implies that \( f_{\sigma} \) has exponentially decaying Fourier coefficients. Hence, \( f_{\sigma} \in \ell^{\exp}_\sigma \), and its Fourier series converges uniformly to a real-analytic function.

Therefore, \( f_{\sigma} \) is real-analytic on \( [-1,1] \).
\end{proof}

\section{Fourier Approximation}\label{c23}

In this section, we show how to approximate the fixed point of the annealed Perron-Frobenius operator \( P_\sigma \) using the Fourier basis.

Since \( P_\sigma \) acts on functions, our goal is to approximate its fixed point—a function—via a finite-dimensional computation. To do this, we project \( P_\sigma \) onto a finite-dimensional subspace spanned by the first \( 2k + 1 \) Fourier modes. This allows us to work with a truncated version of the operator and compute a numerical approximation of the fixed point in coefficient space. However, note that recovering the full analytic function from its truncated Fourier representation is not always possible.

To implement this approximation, we define a projection operator \( \pi_k \) that truncates the Fourier expansion to the first \( 2k + 1 \) terms.

\begin{definition}
Let \( \pi_k : L^1([-1,1]) \to L^2([-1,1]) \) be defined by
\[
\pi_k f(x) = \sum_{j=-k}^{k} \left( \frac{1}{2} \int_{-1}^{1} f(x) e^{-j \pi i x} dx \right) e^{j \pi i x} = \sum_{j=-k}^{k} \mathcal{F}(f)[j] e^{j \pi i x}.
\]
This is the \textbf{Galerkin projection} onto the Fourier modes of order at most \( k \).
\end{definition}

To handle truncation directly in Fourier coefficient space, we define the projection \( \widetilde{\pi}_k \) on sequences:

\begin{definition}
Let \( \widetilde{\pi}_k : \ell_\infty \to \ell_\infty \) be defined by
\[
\widetilde{\pi}_k((a_j)_{j \in \mathbb{Z}}) = (b_j)_{j \in \mathbb{Z}}, \quad \text{where } b_j = 
\begin{cases}
a_j, & |j| \leq k, \\
0,   & |j| > k.
\end{cases}
\]
\end{definition}

The following lemma summarizes key algebraic properties of \( \pi_k \), \( \widetilde{\pi}_k \), and their relation to the diagonal operator \( D_\sigma \) defined earlier.

\begin{lemma} \label{f42}
Let \( \pi_k \), \( \widetilde{\pi}_k \), and \( D_\sigma \) be as above. Then:
\begin{enumerate}
    \item \( \pi_k^n = \pi_k \) and \( \widetilde{\pi}_k^n = \widetilde{\pi}_k \) for all \( n \in \mathbb{N} \) (they are projections).
    \item If \( k < k' \), then \( \pi_k \pi_{k'} = \pi_k \).
    \item \( D_\sigma \) commutes with both \( \widetilde{\pi}_k \) and \( 1 - \widetilde{\pi}_k \).
    \item The following identity holds:
    \[
    \mathcal{F}^{-1} \widetilde{\pi}_k \mathcal{F} = \pi_k.
    \]
\end{enumerate}
\end{lemma}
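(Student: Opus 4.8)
The plan is to reduce all four claims to two elementary facts already available to us: the \emph{orthonormality relation}
\[
\frac{1}{2}\int_{-1}^{1} e^{j\pi i x} e^{-m\pi i x}\,dx = \delta_{jm}, \qquad j,m\in\mathbb{Z},
\]
which underlies the statement that $\{e^{j\pi i x}\}$ is an orthonormal basis of $L^2([-1,1])$ (preceding lemma, item (3)), together with the observation that both $\widetilde{\pi}_k$ and $D_\sigma$ act \emph{diagonally} on sequences. Each of the four items is then a direct verification in which every object that appears reduces to a finite sum, so no interchange of limits or convergence issue arises. The immediate consequence of the orthonormality relation that I would record first is that $\mathcal{F}(e^{m\pi i x})[j]=\delta_{jm}$, and hence that any trigonometric polynomial $g=\sum_{m} c_m e^{m\pi i x}$ has Fourier coefficients $\mathcal{F}(g)[j]=c_j$; this single fact drives items (1) and (2).

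For item (1), I would write $\pi_k f = \sum_{|m|\le k}\mathcal{F}(f)[m]\,e^{m\pi i x}$, which is a trigonometric polynomial whose $m$-th coefficient is $\mathcal{F}(f)[m]$ for $|m|\le k$. Applying $\pi_k$ again and using $\mathcal{F}(\pi_k f)[j]=\mathcal{F}(f)[j]$ for $|j|\le k$ recovers $\pi_k f$, giving $\pi_k^2=\pi_k$ and, by induction, $\pi_k^n=\pi_k$; the identity $\widetilde{\pi}_k^{\,n}=\widetilde{\pi}_k$ is immediate from the coordinatewise definition. Item (2) is the same computation: for $k<k'$, $\pi_{k'}f$ is a trigonometric polynomial with coefficients $\mathcal{F}(f)[m]$ for $|m|\le k'$, and since $\{|m|\le k\}\subset\{|m|\le k'\}$, applying $\pi_k$ keeps exactly the modes with $|m|\le k$, yielding $\pi_k$.

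Item (3) is purely formal: by Proposition~\ref{f5} and Corollary~\ref{f32}, $D_\sigma$ is the diagonal operator $(D_\sigma a)_j = e^{-\sigma^2 j^2\pi^2/2}\,a_j$, while $\widetilde{\pi}_k$ multiplies the $j$-th coordinate by the indicator of $\{|j|\le k\}$. Two such diagonal multipliers commute coordinatewise, so $D_\sigma\widetilde{\pi}_k=\widetilde{\pi}_k D_\sigma$, and consequently $D_\sigma$ also commutes with $1-\widetilde{\pi}_k$. For item (4) I would trace a function $f\in L^1$ through the composition: $\mathcal{F}(f)$ is a bounded sequence, $\widetilde{\pi}_k\mathcal{F}(f)$ is the \emph{finitely supported} sequence equal to $\mathcal{F}(f)[j]$ for $|j|\le k$ and $0$ otherwise, and since finitely supported sequences lie in $\ell^{\exp}_\sigma$ the inversion established in the preceding lemma applies, giving $\mathcal{F}^{-1}\widetilde{\pi}_k\mathcal{F}f = \sum_{|j|\le k}\mathcal{F}(f)[j]\,e^{j\pi i x}=\pi_k f$. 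The only point requiring any care---and the closest thing to an obstacle---is bookkeeping about domains: one must note that $\pi_{k'}f$ is a trigonometric polynomial, hence lies in $L^1$ so that $\pi_k$ may legitimately be applied to it, and that $\widetilde{\pi}_k\mathcal{F}(f)$ is finitely supported so that $\mathcal{F}^{-1}$ is well defined on it; both are trivially satisfied.
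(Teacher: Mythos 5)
Your proof is correct, and all four verifications are the routine ones the authors evidently had in mind: the paper states Lemma \ref{f42} without proof, treating these properties as immediate consequences of the definitions and of the orthonormality of the Fourier basis. Your care about the half-normalization (so that $\mathcal{F}(e^{m\pi i x})[j]=\delta_{jm}$ holds exactly) and about why $\mathcal{F}^{-1}$ is defined on the finitely supported sequence $\widetilde{\pi}_k\mathcal{F}(f)$ supplies precisely the bookkeeping the paper leaves implicit.
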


Recall that \( D_\sigma \mathcal{F} = \mathcal{F} N_\sigma \) and that the diagonal entries of \( D_\sigma \) decay exponentially. We now quantify how close \( N_\sigma \) is to its truncated version.

\begin{theorem}\label{thm:tail_estimate}
Let $\pi_k$ denote the Fourier projection onto modes $|j|\le k$ on $[-1,1]$, and let $N_\sigma$ be the
Gaussian noise operator. Set
\[
\Gamma_{\sigma,k}
:=\sqrt{\frac{1}{\sigma\sqrt{\pi}}\coth\!\Bigl(\frac{1}{2\sigma^2}\Bigr)}\;
e^{-\frac{\sigma^2\pi^2 k^2}{2}}.
\]
Then
\[
\|N_\sigma (1-\pi_k)\|_{L^1\to L^2}\le \Gamma_{\sigma,k},
\qquad
\|(1-\pi_k)N_\sigma\|_{L^1\to L^2}\le \Gamma_{\sigma,k}.
\]
\end{theorem}

\begin{proof}
From Parseval's identity, we have
\begin{equation}\label{eq:parseval}
\|g\|_{L^2([-1,1])}^2 = 2\sum_{j\in\mathbb Z}|\mathcal{F}(g)[j]|^2.
\end{equation}
Moreover, $N_\sigma$ is a Fourier multiplier:
\[
\mathcal{F}(N_\sigma f)[j]=D[j,j]\,\mathcal{F}(f)[j],
\]
and $\pi_k$ is also a Fourier multiplier. Hence $N_\sigma$ and $\pi_k$ commute, and
$N_\sigma(1-\pi_k)=(1-\pi_k)N_\sigma$, so it suffices to bound one of them.

Let $f\in L^1([-1,1])$. Since $(1-\pi_k)$ removes modes $|j|\le k$,
\[
\|N_\sigma(1-\pi_k)f\|_{L^2}^2
=2\sum_{|j|>k}|D[j,j]|^2\,|\mathcal{F}(f)[j]|^2.
\]
Using $|\mathcal{F} (f)[j]|\le \frac12\|f\|_{L^1}$ (Lemma~\ref{lem:fourier}(1)), we obtain
\[
\|N_\sigma(1-\pi_k)f\|_{L^2}^2
\le 2\sum_{|j|>k} e^{-\sigma^2\pi^2 j^2}\,\frac14\|f\|_{L^1}^2
=\frac12\Bigl(\sum_{|j|>k} e^{-\sigma^2\pi^2 j^2}\Bigr)\|f\|_{L^1}^2.
\]
To bound the tail sum, write $j=k+m$ with $m\ge1$ and note $(k+m)^2\ge k^2+m^2$, hence
\[
\sum_{j\ge k+1} e^{-\sigma^2\pi^2 j^2}
\le e^{-\sigma^2\pi^2 k^2}\sum_{m\ge1} e^{-\sigma^2\pi^2 m^2}
\le e^{-\sigma^2\pi^2 k^2}\sum_{m\in\mathbb Z} e^{-\sigma^2\pi^2 m^2}.
\]
Therefore
\[
\sum_{|j|>k} e^{-\sigma^2\pi^2 j^2}
\le 2e^{-\sigma^2\pi^2 k^2}\sum_{m\in\mathbb Z} e^{-\sigma^2\pi^2 m^2}.
\]
By Jacobi/Poisson summation,
\[
\sum_{m\in\mathbb Z} e^{-\sigma^2\pi^2 m^2}
=\frac{1}{\sigma\sqrt{\pi}}\sum_{n\in\mathbb Z} e^{-n^2/\sigma^2}
\le \frac{1}{\sigma\sqrt{\pi}}\coth\!\Bigl(\frac{1}{2\sigma^2}\Bigr),
\]
using the bound from Remark~\ref{rem:coth-bound}. Combining the previous estimates gives
\[
\|N_\sigma(1-\pi_k)f\|_{L^2}^2
\le \frac12\cdot 2e^{-\sigma^2\pi^2 k^2}\cdot
\frac{1}{\sigma\sqrt{\pi}}\coth\!\Bigl(\frac{1}{2\sigma^2}\Bigr)\,\|f\|_{L^1}^2,
\]
and taking square roots yields the stated bound.
\end{proof}

\begin{remark}
The factor $\coth\!\bigl(\frac{1}{2\sigma^2}\bigr)$ controls the wrap--around effect introduced by periodizing the Gaussian kernel (cf.\ Lemma~\ref{c54} and Remark~\ref{rem:coth-bound}).
\end{remark}

Next, we show that the projected discretization of \( P_\sigma \) coincides with that of the individual components:

\begin{lemma}\label{lem:proj_commute}
The following identity holds:
\[
\pi_k N_\sigma \pi_k P \pi_k = \pi_k P_\sigma \pi_k.
\]
\end{lemma}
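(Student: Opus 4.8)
The plan is to reduce the claimed identity to a single commutation relation, namely that the smoothing operator $N_\sigma$ commutes with the Galerkin projection $\pi_k$. Once this is established, the projection sitting in the middle of $\pi_k N_\sigma \pi_k P \pi_k$ can be absorbed using idempotency, and the definition $P_\sigma = N_\sigma P$ finishes the argument. So the real content is the commutation $\pi_k N_\sigma = N_\sigma \pi_k$, and everything else is formal.

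First I would record the three algebraic facts already available. From Proposition~\ref{f5} the operator $N_\sigma$ is diagonalized by the Fourier transform, i.e. $N_\sigma = \mathcal{F}^{-1} D_\sigma \mathcal{F}$ with $D_\sigma$ diagonal. From item~(4) of Lemma~\ref{f42} the Galerkin projection satisfies $\pi_k = \mathcal{F}^{-1} \widetilde{\pi}_k \mathcal{F}$, and from item~(3) the diagonal operator $D_\sigma$ commutes with the coordinate truncation $\widetilde{\pi}_k$. These are exactly the ingredients needed to transfer the obvious commutation of two diagonal/truncation operators in sequence space back to function space.

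Then I would compute
\[
\pi_k N_\sigma = \mathcal{F}^{-1} \widetilde{\pi}_k \mathcal{F}\, \mathcal{F}^{-1} D_\sigma \mathcal{F} = \mathcal{F}^{-1} \widetilde{\pi}_k D_\sigma \mathcal{F} = \mathcal{F}^{-1} D_\sigma \widetilde{\pi}_k \mathcal{F} = N_\sigma \pi_k,
\]
where $\mathcal{F}\mathcal{F}^{-1} = \mathrm{Id}$ is used in the middle and the commutation of $D_\sigma$ with $\widetilde{\pi}_k$ is the decisive step. Using the projection property $\pi_k^2 = \pi_k$ (item~(1) of Lemma~\ref{f42}), this yields
\[
\pi_k N_\sigma \pi_k = \pi_k (N_\sigma \pi_k) = \pi_k (\pi_k N_\sigma) = \pi_k N_\sigma.
\]
Applying this operator identity and then invoking $P_\sigma = N_\sigma P$ gives
\[
\pi_k N_\sigma \pi_k P \pi_k = \pi_k N_\sigma P \pi_k = \pi_k P_\sigma \pi_k,
\]
as claimed.

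The main obstacle is not the algebra but the bookkeeping of mapping properties, so that every composition is legitimate and the intertwining relations are chained on a common space: $\pi_k$ is defined on $L^1$ with values in $L^2$, $N_\sigma$ maps $L^1$ into $BV \subset L^\infty \subset L^1$, and $P$ maps $L^1$ into $L^1$, so each intermediate image lies in the domain of the next operator. In particular $\pi_k f \in L^2 \subset L^1$ guarantees that $N_\sigma \pi_k f$ is well-defined and that the diagonalization $N_\sigma = \mathcal{F}^{-1} D_\sigma \mathcal{F}$ is applied only to $L^1$ functions whose Fourier coefficients are genuinely acted on by $D_\sigma$. I would therefore make sure the relations from Proposition~\ref{f5} and Lemma~\ref{f42} are stated on the same function space before composing them, so that the cancellation $\mathcal{F}\mathcal{F}^{-1} = \mathrm{Id}$ in the computation above is valid.
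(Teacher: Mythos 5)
Your proof is correct and follows essentially the same route as the paper: the paper's one-line proof invokes $P_\sigma = N_\sigma P$ and $\pi_k^2 = \pi_k$, implicitly relying on the commutation $\pi_k N_\sigma = N_\sigma \pi_k$ (which it states explicitly only in the proof of Lemma~\ref{lem:discretized_norm}, derived exactly as you do from Proposition~\ref{f5} and Lemma~\ref{f42}). Your write-up simply makes that decisive commutation step, and the domain bookkeeping, explicit.
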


\begin{proof}
Follows directly from the identities \( P_\sigma = N_\sigma P \), \( \pi_k^2 = \pi_k \) and $\pi_k N_{\sigma}=N_{\sigma}\pi_k$; the latter follows from Proposition \ref{f5} and the commutation relations between \( D_\sigma \), \( \widetilde{\pi}_k \), and \( \mathcal{F} \) (see Lemma~\ref{f42}).
\end{proof}

While \( N_\sigma \) is a contraction on \( L^1 \), the discretized version \( \pi_k N_\sigma \) may, in principle, increase the norm. We now bound this growth:

\begin{lemma}\label{lem:tail_L1_Linfty}
Let $N_\sigma$ be the Gaussian noise operator with periodic boundary condition, and let $\pi_k$ be the Fourier
projection onto modes $|j|\le k$.
Then for every $f\in L^1([-1,1])$,
\begin{align}
\|(1-\pi_k)N_\sigma f\|_{L^\infty([-1,1])}
&\le \frac12\|f\|_{L^1([-1,1])}\sum_{|j|>k} e^{-\frac{\sigma^2\pi^2 j^2}{2}},
\label{eq:tail_Linfty}\\
\|(1-\pi_k)N_\sigma f\|_{L^1([-1,1])}
&\le \|f\|_{L^1([-1,1])}\sum_{|j|>k} e^{-\frac{\sigma^2\pi^2 j^2}{2}}.
\label{eq:tail_L1}
\end{align}
In particular,
\[
\|(1-\pi_k)N_\sigma\|_{L^1\to L^1}\le \Gamma^{(1)}_{\sigma,k},
\qquad
\Gamma^{(1)}_{\sigma,k}:=\sum_{|j|>k} e^{-\frac{\sigma^2\pi^2 j^2}{2}}.
\]
Moreover, letting $a:=\sigma^2\pi^2/2$, the tail admits the explicit bound
\begin{equation}\label{eq:Gamma1_explicit}
\Gamma^{(1)}_{\sigma,k}\le \frac{2}{\sigma^2\pi^2\,k}e^{-\frac{\sigma^2\pi^2 k^2}{2}}
\qquad (k\ge 1).
\end{equation}
\end{lemma}

\begin{proof}
Since $N_\sigma$ is a Fourier multiplier,
\[
\mathcal{F}(N_\sigma f)[j]=D[j,j]\mathcal{F}(f)[j],
\]
Therefore
\[
(1-\pi_k)N_\sigma f(x)=\sum_{|j|>k} D[j,j]\mathcal{F}(f)[j]\,e^{j\pi i x}.
\]
Using $|\mathcal{F}(f)[j]|\le \frac12\|f\|_{L^1}$, we obtain for each $x\in[-1,1]$,
\[
|(1-\pi_k)N_\sigma f(x)|
\le \sum_{|j|>k} D[j,j]|\mathcal{F}(f)[j]|
\le \frac12\|f\|_{L^1}\sum_{|j|>k} e^{-\frac{\sigma^2\pi^2 j^2}{2}},
\]
which yields \eqref{eq:tail_Linfty}. Since $m([-1,1])=2$, we also have
$\|g\|_{L^1([-1,1])}\le 2\|g\|_{L^\infty([-1,1])}$, hence \eqref{eq:tail_L1}.

For \eqref{eq:Gamma1_explicit}, write $\Gamma^{(1)}_{\sigma,k}=\displaystyle 2\sum_{j\ge k+1}e^{-a j^2}$ and bound the tail by an integral:
\[
\sum_{j\ge k+1}e^{-a j^2}\le \int_k^\infty e^{-a x^2}\,dx
\le \frac{e^{-a k^2}}{2ak},
\]
which gives $\Gamma^{(1)}_{\sigma,k}\le \frac{1}{ak}e^{-a k^2}
=\frac{2}{\sigma^2\pi^2 k}e^{-\frac{\sigma^2\pi^2 k^2}{2} }$.
\end{proof}

\begin{corollary}\label{cor:discretized_norm}
With $\Gamma^{(1)}_{\sigma,k}$ as in Lemma~\ref{lem:tail_L1_Linfty}, one has
\[
\|\pi_k N_\sigma\|_{L^1\to L^1}\le 1+\Gamma^{(1)}_{\sigma,k}.
\]
\end{corollary}

\begin{proof}
Since $\pi_k=\textrm{Id}-(\textrm{Id}-\pi_k)$ and $\|N_\sigma\|_{L^1\to L^1}\le 1$ (Lemma~\ref{lem:regN}),
\[
\|\pi_k N_\sigma\|_{L^1\to L^1}
\le \|N_\sigma\|_{L^1\to L^1}+\|(1-\pi_k)N_\sigma\|_{L^1\to L^1}
\le 1+\Gamma^{(1)}_{\sigma,k}.
\]
\end{proof}

\begin{remark}
Together, these results justify the use of the finite-rank approximation \( \pi_k P_\sigma \pi_k \) as a computable and controlled approximation of \( P_\sigma \), with a well-quantified truncation error.
\end{remark}
Next, we compare the \(L^1\) norm of the fixed point of \(P_{\sigma}\) with the norm of its Galerkin projection.

\begin{lemma}\label{lemma:L1discr}
Let \(f_{\sigma}\) be the fixed point of \(P_{\sigma}\). Then
\[
\|\pi_k f_{\sigma}\|_{L^1} = \|\pi_k N_{\sigma} P f_{\sigma}\|_{L^1} \leq (1 + \Gamma^{(1)}_{\sigma,k}) \|f_{\sigma}\|_{L^1}.
\]
\end{lemma}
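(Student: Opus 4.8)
The plan is to chain together three facts already established in the excerpt: the fixed-point identity for $f_\sigma$, the norm bound for the projected smoothing operator, and the $L^1$-contractivity of the Perron--Frobenius operator $P$. None of the individual steps is deep; the lemma is essentially a direct corollary of the norm estimate in Lemma~\ref{lem:discretized_norm}.

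First I would establish the claimed equality. Since $f_\sigma$ is the fixed point of $P_\sigma$ and $P_\sigma = N_\sigma P$ (the identity recorded just after the definition of $N_\sigma$), we have $f_\sigma = P_\sigma f_\sigma = N_\sigma P f_\sigma$. Applying the Galerkin projection $\pi_k$ to both sides and taking $L^1$ norms gives $\|\pi_k f_\sigma\|_{L^1} = \|\pi_k N_\sigma P f_\sigma\|_{L^1}$ immediately, with no estimation required.

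Next, for the inequality, I would regard $P f_\sigma$ as the argument fed into the projected operator $\pi_k N_\sigma$, and invoke Lemma~\ref{lem:discretized_norm}, which gives $\|\pi_k N_\sigma\|_{L^1 \to L^1} \leq 1 + \Gamma_{\sigma,k}$. Hence
\[
\|\pi_k N_\sigma P f_\sigma\|_{L^1} \leq (1 + \Gamma_{\sigma,k}) \, \|P f_\sigma\|_{L^1}.
\]
Finally, since $P$ is the Perron--Frobenius operator of a non-singular map it satisfies $\|P\|_{L^1 \to L^1} \leq 1$ (already used in the proof of Lemma~\ref{lemma:existence}), so $\|P f_\sigma\|_{L^1} \leq \|f_\sigma\|_{L^1}$; substituting this in yields the stated bound.

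I do not anticipate a genuine obstacle here, since every inequality in the chain is already available. The only point requiring a moment of care is making sure the operator whose norm is being bounded is exactly $\pi_k N_\sigma$ acting on $P f_\sigma$, rather than $\pi_k P_\sigma$ acting on $f_\sigma$. Writing $P_\sigma = N_\sigma P$ explicitly and inserting $P f_\sigma$ as the input to Lemma~\ref{lem:discretized_norm} keeps the bookkeeping clean and avoids needing any separate estimate on $\|\pi_k P_\sigma\|_{L^1 \to L^1}$; the contractivity of $P$ then absorbs the remaining factor cleanly.
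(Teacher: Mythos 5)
Your argument is correct and is exactly the intended one: the paper states this lemma without proof, as an immediate consequence of the fixed-point identity $f_\sigma = N_\sigma P f_\sigma$, the bound $\|\pi_k N_\sigma\|_{L^1\to L^1}\le 1+\Gamma_{\sigma,k}$ from Lemma~\ref{lem:discretized_norm}, and $\|P\|_{L^1\to L^1}\le 1$. Your chaining of these three facts, including the care about applying the operator-norm bound to $\pi_k N_\sigma$ acting on $Pf_\sigma$, matches the paper's (implicit) reasoning.
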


We now define the discretized version of \(P_{\sigma}\), which we will use in our approximation scheme.

\begin{definition}
The discretized annealed Perron-Frobenius operator is defined as
\[
P_{\sigma,k} := \pi_k N_{\sigma} \pi_k P \pi_k = \pi_k N_{\sigma} P \pi_k = \pi_k P_{\sigma} \pi_k : L^1 \to L^2.
\]
\end{definition}

Note that if \(f \in \mathcal{U}_0\), the space of average $0$ functions in \(L^2([-1,1])\), then
\[
\mathcal{F}(f)[0] = \frac{1}{2} \int_{-1}^{1} f(x) dx = 0.
\]

We now state our main theorem, a variation of Theorem 3.4 from \cite{GMNP}, adapted to our context. It provides a rigorous bound on the error between the true fixed point \(f_\sigma\) and an approximate fixed point obtained from a finite-dimensional Galerkin scheme.

\begin{theorem}\label{c27}
Let \(f_{\sigma}\) be the unique fixed point of \(P_{\sigma}\), and let \(P_{\sigma,k}\) be its Galerkin approximation. 
For $i\in \mathbb{N}_0$, set $C_i:=\|P_{\sigma,k}^i|_{\mathcal{U}_0}\|_{L^2 \to L^2}$, and suppose that $C_n<1$ for some $n\in \mathbb{N}$. If \(g\) be a trigonometric polynomial of degree at most \(k\) such that \(\|P_{\sigma,k} g - g\|_{L^2} \leq \epsilon\). Then
\[
\|f_{\sigma} - g\|_{L^2} \leq \frac{1}{1 - C_n} \sum_{i=0}^{n-1} C_i \left( (1 + \Gamma_{\sigma,k} + \|\rho_{\sigma}\|_{L^2(\mathbb{R})}) \Gamma^{(1)}_{\sigma,k} \|f_{\sigma}\|_{L^1} + \epsilon \right).
\]
\begin{align*}
	\|f_\sigma - g\|_{L^2} \leq \frac{1}{1 - C_n} \sum_{i = 0}^{n - 1} C_i \left( \delta+ \epsilon \right). 
\end{align*}
where \[
\delta 
 \leq \left(\Gamma_{\sigma,k}(1+\Gamma_{\sigma,k}^{(1)})+ \|\rho_\sigma\|_{L^2(\mathbb{R})} \sqrt{ \coth\!\Bigl(\frac{1}{2\sigma^2}\Bigr)}\cdot \Gamma^{(1)}_{\sigma,k} \right) \|f_{\sigma}\|_{L^1}
\]
\end{theorem}

\begin{remark}\label{rem:symetrized}
All constants involved in the theorem can be rigorously computed or estimated with the aid of a computer. In particular, the contraction bound \(C_n\), the intermediate norms \(C_i\), and the residual \(\|P_{\sigma,k} g - g\|_{L^2}\) are all computable. The choice of the approximation \(g\) is crucial: we typically use a numerically computed fixed point of \(P_{\sigma,k}\),  and enforce symmetry in its truncated Fourier representation to ensure that the resulting trigonometric polynomial is real. An approximation $g$ choice in this way will be referred to as a \textbf{symmetrized} trigonometric polynomial approximating $f_{\sigma}$
\end{remark}

To prove Theorem~\ref{c27}, we first recall two lemmas adapted from \cite{GMNP}, which control the behavior of powers of the discretized operator \(P_{\sigma,k}\) restricted to the subspace \(\mathcal{U}_0\).

\begin{lemma}[\cite{GMNP}]\label{c8}
Let \(C_i := \|P_{\sigma,k }^{i}|_{\mathcal{U}_0}\|_{L^{2}\rightarrow L^{2}}\) for \(i \in \mathbb{N}_0\), and suppose that \(C_n < 1\) for some \(n \in \mathbb{N}\). Then:
\begin{enumerate}
    \item \(\sum_{i=0}^{\infty} C_i \leq \frac{1}{1 - C_n} \sum_{i=0}^{n-1} C_i\),
    \item There exist constants \(C > 0\) and \(\theta \in (0,1)\) such that \(\|P_{\sigma,k}^i|_{\mathcal{U}_0}\|_{L^2 \to L^2} \leq C \theta^i\) for all \(i\).
\end{enumerate}
\end{lemma}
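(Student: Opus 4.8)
The plan is to exploit the submultiplicativity of the sequence $(C_i)$, which rests on the single structural fact that $P_{\sigma,k}$ maps the zero-average subspace $\mathcal{U}_0$ into itself. First I would verify this invariance: for $f \in \mathcal{U}_0$ the Galerkin projection $\pi_k$ leaves the zeroth Fourier coefficient unchanged, so $\pi_k f$ still has vanishing average; since $P_\sigma$ is a transfer operator it preserves the integral, so $P_\sigma \pi_k f$ again has zero average; and the final application of $\pi_k$ again preserves this. Hence $P_{\sigma,k}(\mathcal{U}_0) \subseteq \mathcal{U}_0$, and the restriction $P_{\sigma,k}|_{\mathcal{U}_0}$ is a genuine endomorphism of $\mathcal{U}_0$.

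With the invariance in hand, $P_{\sigma,k}^{\,i+j}|_{\mathcal{U}_0}$ factors as the composition $(P_{\sigma,k}^{\,i}|_{\mathcal{U}_0})\,(P_{\sigma,k}^{\,j}|_{\mathcal{U}_0})$, and submultiplicativity of the operator norm gives $C_{i+j} \leq C_i C_j$. Writing an arbitrary index as $m = qn + r$ with $0 \leq r < n$ then yields $C_m \leq C_n^{\,q}\, C_r$, which is the single estimate driving both claims.

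For part (1) I would group the series into consecutive blocks of length $n$:
\[
\sum_{i=0}^{\infty} C_i = \sum_{q=0}^{\infty} \sum_{r=0}^{n-1} C_{qn+r} \leq \sum_{q=0}^{\infty} C_n^{\,q} \sum_{r=0}^{n-1} C_r = \frac{1}{1-C_n} \sum_{r=0}^{n-1} C_r,
\]
where the geometric sum converges precisely because $C_n < 1$. For part (2) I would set $\theta := C_n^{1/n} \in (0,1)$; then $C_n^{\,q} = \theta^{\,nq} = \theta^{\,m-r}$, so that
\[
C_m \leq \theta^{\,m-r} C_r = \theta^{\,m}\,(\theta^{-r} C_r) \leq C\,\theta^{\,m}, \qquad C := \max_{0 \leq r < n} \theta^{-r} C_r,
\]
and $C$ is finite and positive (indeed $C \geq \theta^{0} C_0 = 1$).

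The only delicate point—and the one I would treat most carefully—is the invariance $P_{\sigma,k}(\mathcal{U}_0) \subseteq \mathcal{U}_0$, since without it the power $P_{\sigma,k}^{\,i+j}$ cannot be split into restricted factors and the submultiplicative inequality fails. Everything following that reduces to the standard block/geometric estimate, so the result is essentially a transcription of the corresponding step in \cite{GMNP} to the present Fourier--Galerkin setting.
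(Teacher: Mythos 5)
Your proof is correct and follows essentially the same route as the paper's: the decomposition $m = qn + r$, the estimate $C_m \le C_n^{\,q} C_r$ via invariance of $\mathcal{U}_0$, the block/geometric summation for part (1), and $\theta = C_n^{1/n}$ for part (2) (your constant $C = \max_r \theta^{-r} C_r$ is a marginally sharper variant of the paper's $\max_r C_r / C_n$). Your explicit verification that $P_{\sigma,k}$ preserves $\mathcal{U}_0$ is a welcome addition that the paper only asserts implicitly.
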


\begin{proof}
\textbf{(1)} Write \(i = qn + r\) with \(0 \leq r < n\). Using the invariance \(P_{\sigma,k}(\mathcal{U}_0) \subset \mathcal{U}_0\), we get:
\[
\|P_{\sigma,k}^i|_{\mathcal{U}_0}\|_{L^2 \to L^2} \leq \|P_{\sigma,k}^n\|_{L^2\rightarrow L^2}^q \cdot \|P_{\sigma,k}^r\|_{L^2\rightarrow L^2} \leq C_n^q C_r.
\]
Summing over \(i\), we reorganize by blocks of length \(n\):
\[
\sum_{i=0}^\infty C_i \leq \sum_{q=0}^\infty C_n^q \sum_{r=0}^{n-1} C_r = \left( \sum_{q=0}^\infty C_n^q \right) \sum_{r=0}^{n-1} C_r = \frac{1}{1 - C_n} \sum_{i=0}^{n-1} C_i.
\]

\textbf{(2)} From the previous bound,
\[
\|P_{\sigma,k}^i\|_{L^2\rightarrow L^2} \leq C_n^{\lfloor i/n \rfloor} \cdot \max_{r < n} C_r \leq C_n^{i/n - 1} \cdot \max_{r < n} C_r,
\]
so the desired exponential decay holds with \(\theta = C_n^{1/n}\) and \(C = \max_{r < n} C_r / C_n\).
\end{proof}

We now relate the size of a vector in \(\mathcal{U}_0\) to the residual of its image under \(P_{\sigma,k}\).

\begin{lemma}\label{c9}
Let \(C_i := \|P_{\sigma,k }^{i}|_{\mathcal{U}_0}\|_{L^2 \to L^2}\) with \(C_n < 1\) for some \(n \in \mathbb{N}\). Then for any \(v \in \mathcal{U}_0\),
\[
\|v\|_{L^2} \leq \frac{1}{1 - C_n} \sum_{i=0}^{n-1} C_i \|P_{\sigma,k} v - v\|_{L^2}.
\]
\end{lemma}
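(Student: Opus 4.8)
The plan is to derive an exact algebraic identity expressing \(v\) in terms of its residual \(w := P_{\sigma,k}v - v\) and the iterates \(P_{\sigma,k}^i w\), and then to estimate termwise using the operator norms \(C_i\). The only structural ingredient needed beyond these norms is that \(\mathcal{U}_0\) is invariant under \(P_{\sigma,k}\), so that each \(C_i\) genuinely controls \(\|P_{\sigma,k}^i u\|_{L^2}\) for \(u \in \mathcal{U}_0\).

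First I would record this invariance. For \(u \in \mathcal{U}_0\) one has \(\mathcal{F}(u)[0] = \tfrac{1}{2}\int_{-1}^1 u\,dx = 0\), the projection \(\pi_k\) leaves the zero-frequency coefficient unchanged, and \(P_\sigma\) preserves total mass; hence the composition \(P_{\sigma,k} = \pi_k P_\sigma \pi_k\) maps \(\mathcal{U}_0\) into \(\mathcal{U}_0\). In particular both \(v\) and the residual \(w = P_{\sigma,k}v - v\) belong to \(\mathcal{U}_0\), so the norms \(C_i\) may be applied to all the relevant vectors.

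Next I would set up the telescoping identity. For each \(i\),
\[
P_{\sigma,k}^{\,i} v - P_{\sigma,k}^{\,i+1} v = -P_{\sigma,k}^{\,i}\bigl(P_{\sigma,k}v - v\bigr) = -P_{\sigma,k}^{\,i} w.
\]
Summing from \(i = 0\) to \(n-1\), the left-hand side collapses to \(v - P_{\sigma,k}^{\,n} v\), yielding
\[
v = P_{\sigma,k}^{\,n} v - \sum_{i=0}^{n-1} P_{\sigma,k}^{\,i} w.
\]
Taking \(L^2\) norms, applying the triangle inequality, and using \(C_n\) on the first term and \(C_i\) on each summand gives
\[
\|v\|_{L^2} \leq C_n \|v\|_{L^2} + \sum_{i=0}^{n-1} C_i \|w\|_{L^2}.
\]
Since \(C_n < 1\), I would rearrange to isolate \(\|v\|_{L^2}\) and substitute \(w = P_{\sigma,k}v - v\), obtaining the claimed bound.

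I do not expect a genuine obstacle here: the argument is a clean Neumann-series telescoping estimate. The only point requiring care is the verification that \(\mathcal{U}_0\) is invariant under \(P_{\sigma,k}\) and that the residual \(w\) lies in \(\mathcal{U}_0\), which legitimizes applying the norms \(C_i\) at every stage; once this is in place, the remaining steps are purely formal.
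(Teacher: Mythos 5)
Your argument is correct, and it rests on the same telescoping identity
\(v = P_{\sigma,k}^{\,n}v - \sum_{i=0}^{n-1}P_{\sigma,k}^{\,i}(P_{\sigma,k}v - v)\)
that the paper uses; the preliminary check that \(P_{\sigma,k}\) preserves \(\mathcal{U}_0\) (via mass conservation of \(P_\sigma\) and the fact that \(\pi_k\) fixes the zero mode) is exactly the invariance the paper invokes implicitly. The one place where you diverge is in how the factor \(\frac{1}{1-C_n}\) is produced. The paper keeps the remainder term \(\|P_{\sigma,k}^{\,n}v\|\), lets \(n\to\infty\) using the exponential decay from part (2) of the preceding lemma to discard it, arrives at \(\|v\|\le \bigl(\sum_{i=0}^{\infty}C_i\bigr)\|P_{\sigma,k}v-v\|\), and then invokes part (1) of that lemma to resum the infinite series as \(\frac{1}{1-C_n}\sum_{i=0}^{n-1}C_i\). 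You instead bound the remainder directly by \(C_n\|v\|\) and absorb it into the left-hand side, which is more elementary and self-contained: it needs neither the limit argument nor the summability lemma, only the single hypothesis \(C_n<1\). Both routes land on the identical constant, so nothing is lost either way; your version is arguably the cleaner proof of this particular lemma, while the paper's detour through \(\sum_{i=0}^{\infty}C_i\) reuses machinery it has already set up for other estimates.
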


\begin{proof}
We telescope:
\[
v = (P_{\sigma,k} v - v) + (P_{\sigma,k}^2 v - P_{\sigma,k} v) + \cdots + (P_{\sigma,k}^n v - P_{\sigma,k}^{n-1} v) - P_{\sigma,k}^n v.
\]
Taking \(L^2\) norms:
\[
\|v\|_{L^2} \leq \sum_{i=0}^{n-1} \|P_{\sigma,k}^i (P_{\sigma,k} v - v)\|_{L^2} + \|P_{\sigma,k}^n v\|_{L^2}.
\]
By Lemma~\ref{c8}(2), \(\|P_{\sigma,k}^n v\|_{L^2} \to 0\) as \(n \to \infty\). So:
\[
\|v\|_{L^2} \leq \sum_{i=0}^\infty C_i \|P_{\sigma,k} v - v\|_{L^2} \leq \frac{1}{1 - C_n} \sum_{i=0}^{n-1} C_i \|P_{\sigma,k} v - v\|_{L^2},
\]
as claimed.
\end{proof}

\begin{remark}\label{rem:coth-bound}
For $\sigma>0$ we have the elementary estimate $n^2\ge n$ for all $n\ge1$, hence
\[
\sum_{n\ge1} e^{-n^2/\sigma^2}
\le \sum_{n\ge1} e^{-n/\sigma^2}
=\frac{e^{-1/\sigma^2}}{1-e^{-1/\sigma^2}}.
\]
Therefore
\[
1+2\sum_{n\ge1} e^{-n^2/\sigma^2}
\le 1+\frac{2e^{-1/\sigma^2}}{1-e^{-1/\sigma^2}}
=\frac{1+e^{-1/\sigma^2}}{1-e^{-1/\sigma^2}}
=\coth\!\Bigl(\frac{1}{2\sigma^2}\Bigr).
\]
\end{remark}

\begin{lemma}\label{c54}
Let $\rho_\sigma(x)=\frac{1}{\sigma\sqrt{2\pi}}e^{-x^2/(2\sigma^2)}$ and let
\[
(\tau_*\rho_\sigma)(x):=\sum_{j\in\mathbb Z}\rho_\sigma(x+2j),\qquad x\in[-1,1],
\]
be its $2$--periodization. Then $\tau_*\rho_\sigma\in L^2([-1,1])$ and
\begin{equation}\label{eq:tau_rho_L2_parseval}
\|\tau_*\rho_\sigma\|_{L^2([-1,1])}^2
= \frac12\sum_{k\in\mathbb Z} e^{-\sigma^2\pi^2 k^2}.
\end{equation}
Moreover,
\begin{equation}\label{eq:rho_L2_R}
\|\rho_\sigma\|_{L^2(\mathbb R)}^2=\frac{1}{2\sigma\sqrt{\pi}},
\end{equation}
and, since $\rho_\sigma>0$ everywhere,
\begin{equation}\label{eq:strict_ineq}
\|\tau_*\rho_\sigma\|_{L^2([-1,1])}^2>\|\rho_\sigma\|_{L^2(\mathbb R)}^2.
\end{equation}
Equivalently, using the Poisson summation identity,
\begin{equation}\label{eq:tau_rho_L2_theta}
\|\tau_*\rho_\sigma\|_{L^2([-1,1])}^2
=\frac{1}{2\sigma\sqrt{\pi}}\sum_{n\in\mathbb Z} e^{-n^2/\sigma^2}
\leq \|\rho_\sigma\|_{L^2(\mathbb R)}^2 \coth\!\Bigl(\frac{1}{2\sigma^2}\Bigr).
\end{equation}
\end{lemma}

\begin{proof}
For $f=\tau_*\rho_\sigma$, we compute for each $k\in\mathbb Z$:
\begin{align*}
\mathcal{F}(\tau_*\rho_\sigma)[k]
&=\frac12\int_{-1}^1\sum_{j\in\mathbb Z}\rho_\sigma(x+2j)\,e^{-k\pi i x}\,dx \\
&=\frac12\sum_{j\in\mathbb Z}\int_{-1}^1\rho_\sigma(x+2j)\,e^{-k\pi i x}\,dx
=\frac12\int_{\mathbb R}\rho_\sigma(u)\,e^{-k\pi i u}\,du,
\end{align*}
where we used Tonelli/Fubini (absolute convergence) and the change of variables $u=x+2j$.
Since the Fourier transform of a Gaussian is Gaussian,
\[
\int_{\mathbb R}\rho_\sigma(u)\,e^{-k\pi i u}\,du
= e^{-\frac{\sigma^2\pi^2 k^2}{2} },
\]
hence
\[
\mathcal{F}(\tau_*\rho_\sigma)[k]=\frac12 e^{-\frac{\sigma^2\pi^2 k^2}{2} }.
\]
Plugging this into \eqref{eq:parseval} yields
\[
\|\tau_*\rho_\sigma\|_{L^2([-1,1])}^2
=2\sum_{k\in\mathbb Z}\Bigl|\frac12 e^{-\frac{\sigma^2\pi^2 k^2}{2} }\Bigr|^2
=\frac12\sum_{k\in\mathbb Z}e^{-\sigma^2\pi^2 k^2},
\]
which is \eqref{eq:tau_rho_L2_parseval}.

For \eqref{eq:rho_L2_R}, we compute
\[
\|\rho_\sigma\|_{L^2(\mathbb R)}^2
=\int_{\mathbb R}\Bigl(\frac{1}{\sigma\sqrt{2\pi}}e^{-x^2/(2\sigma^2)}\Bigr)^2\,dx
=\frac{1}{2\pi\sigma^2}\int_{\mathbb R}e^{-x^2/\sigma^2}\,dx
=\frac{1}{2\sigma\sqrt{\pi}}.
\]

Finally, \eqref{eq:strict_ineq} follows immediately from \eqref{eq:tau_rho_L2_parseval} and Jabobi/Poisson summation:
\begin{align*}
\|\tau_*\rho_\sigma\|_{L^2([-1,1])}^2
=	\frac12\sum_{k\in\mathbb Z} e^{-\sigma^2\pi^2 k^2}=\frac{1}{2}\left(\frac{1}{\sigma \sqrt{\pi}} \displaystyle \sum_{n\in\mathbb{Z}}e^{-n^2/\sigma^2} \right)>\frac{1}{2\sigma\sqrt{\pi}}=\|\rho_{\sigma}\|^2_{L^2(\mathbb{R})}
\end{align*}
i.e.\ the extra $n\neq 0$ terms are strictly positive.
The alternative expression \eqref{eq:tau_rho_L2_theta} follows from the Poisson summation formula applied to
$\sum_{k\in\mathbb Z}e^{-\sigma^2\pi^2 k^2}$.
\end{proof}

\begin{proof}[Proof of Theorem \ref{c27}]
Let \( v = f_\sigma - g \). Then
\[
\|v\|_{L^2} \leq \frac{1}{1 - C_n} \sum_{i = 0}^{n - 1} C_i \|P_{\sigma,k} v - v\|_{L^2}, \quad \text{by Lemma~\ref{c9}}.
\]
Using linearity and triangle inequality:
\[
\|P_{\sigma,k} v - v\|_{L^2} \leq \|P_{\sigma,k} f_\sigma - f_\sigma\|_{L^2} + \|P_{\sigma,k} g - g\|_{L^2} = \delta + \epsilon,
\]
where \( \epsilon = \|P_{\sigma,k} g - g\|_{L^2} \). It remains to bound \( \delta = \|P_{\sigma,k} f_\sigma - f_\sigma\|_{L^2} \).

Using Lemma~\ref{lem:proj_commute}, we write
\[
P_{\sigma,k} f_\sigma - f_\sigma = (\pi_k N_\sigma \pi_k P \pi_k - N_\sigma P) f_\sigma = ( \pi_k -1) N_\sigma P \pi_k f_\sigma + N_\sigma P (\pi_k-1) f_\sigma.
\]

For the first term, by Theorem~\ref{thm:tail_estimate} and Lemma~\ref{lemma:L1discr}:
\[
\|(1 - \pi_k) N_\sigma P \pi_k f_\sigma\|_{L^2} \leq \Gamma_{\sigma,k} \|\pi_k f_\sigma\|_{L^1} \leq \Gamma_{\sigma,k}(1 + \Gamma^{(1)}_{\sigma,k}) \|f_\sigma\|_{L^1}.
\]

For the second term, using Lemma~\ref{c54}, Remark~\ref{rem:coth-bound} Lemma~\ref{lem:tail_L1_Linfty} and Young’s inequality:
\[
\|N_\sigma P (1 - \pi_k) f_\sigma\|_{L^2} \leq \|\rho_\sigma\|_{L^2(\mathbb{R})} \sqrt{ \coth\!\Bigl(\frac{1}{2\sigma^2}\Bigr)}\cdot \Gamma^{(1)}_{\sigma,k} \|f_\sigma\|_{L^1}.
\]

Combining both:
\[
\delta = \|P_{\sigma,k} f_\sigma - f_\sigma\|_{L^2} \leq \left(\Gamma_{\sigma,k}(1+\Gamma_{\sigma,k}^{(1)})+ \|\rho_\sigma\|_{L^2(\mathbb{R})} \sqrt{ \coth\!\Bigl(\frac{1}{2\sigma^2}\Bigr)}\cdot \Gamma^{(1)}_{\sigma,k} \right) \|f_{\sigma}\|_{L^1}
\]

Plugging back:
\begin{align*}
\|f_\sigma - g\|_{L^2} \leq \frac{1}{1 - C_n} \sum_{i = 0}^{n - 1} C_i \left( \delta+ \epsilon \right). 
\end{align*}
\end{proof}

\subsection{Implementation of the discretization for the discretized operator}

To discretize the deterministic transfer operator $P$ we use a Fourier--Galerkin projection.
Let $e_k(x):=e^{k\pi i x}$ and define the matrix coefficients by
\[
P_{k,\ell}:=\frac12\int_{-1}^1 \overline{e_k(x)}\,(P e_\ell)(x)\,dx.
\]
Using the duality relation $\int \varphi\cdot P\psi\,dx=\int (\varphi\circ T)\cdot \psi\,dx$ we obtain
\[
P_{k,\ell}
=\frac12\int_{-1}^1 \overline{e_k(T(x))}\,e_\ell(x)\,dx.
\]
For $\alpha>2$, the map $T_{\alpha,\beta}$ is $C^2$ on $(-1,1)$, hence the function
\[
f_k(x):=\overline{e_k(T_{\alpha,\beta}(x))}
\]
is $C^2$ on $(-1,1)$. Moreover, while $f_k(1)=f_k(-1)$ (since $T_{\alpha,\beta}(1)=T_{\alpha,\beta}(-1)$),
its derivative need not match at the endpoints; in other words, the $2$--periodic extension of $f_k$ typically
has a jump in the first derivative at $\pm1$.
Consequently, the Fourier coefficients of $f_k$ (and hence the entries $P_{k,\ell}$) decay quadratically:
for every $\ell\neq 0$,
\begin{equation}\label{eq:Pkell_decay}
|P_{k,\ell}|
\;=\;
\bigl|\mathcal{F}(f_k)[\ell]\bigr|
\;\le\;
\frac{1}{2(\pi \ell)^2}\Bigl(|f_k'(1)-f_k'(-1)|+\|f_k''\|_{L^1}\Bigr),
\end{equation}
and in particular $|P_{k,\ell}|\le C_k/|\ell|^2$ with an explicit constant $C_k$.

For performance we evaluate the Fourier coefficients from point samples using \texttt{FFTW}.
Concretely, for a $2$--periodic integrand $f$ we sample on an equispaced grid
\[
x_m:=-1+\frac{2m}{N},\qquad m=0,\dots,N-1,
\]
and compute the discrete Fourier transform of the vector $(f(x_m))_{m=0}^{N-1}$ by \texttt{FFTW}.
This yields the discrete coefficients
\[
\mathcal{F}_N(f)[\ell]
:=
\frac1N\sum_{m=0}^{N-1} f(x_m)\,e^{-\pi i \ell x_m},
\qquad |\ell|\le \frac N2,
\]
which approximate the true Fourier coefficients
\[
\mathcal{F}(f)[\ell]=\frac12\int_{-1}^1 f(x)\,e^{-\pi i \ell x}\,dx.
\]
The discretization error is an \emph{aliasing} (wrap-around) effect: one has the exact identity
\begin{equation}\label{eq:aliasing}
\mathcal{F}_N(f)[\ell]=\sum_{q\in\mathbb Z} \mathcal{F}(f)[\ell+qN],
\end{equation}
and hence
\begin{equation}\label{eq:aliasing_err}
|\mathcal{F}_N(f)[\ell]-\mathcal{F}(f)[\ell]|
\le
\sum_{q\neq 0}|\mathcal{F}(f)[\ell+qN]|.
\end{equation}
In our application $f=f_k:=\overline{e_k\circ T}$ is $C^2$ on $(-1,1)$ and its $2$--periodic extension typically has a jump in the first derivative at $\pm 1$; consequently
$|\mathcal{F}(f_k)[n]|\le C_k/|n|^2$ for $n\neq 0$ with explicit $C_k$ (see \eqref{eq:Pkell_decay}).
Combining this decay with \eqref{eq:aliasing_err} gives the oversampling bound
\begin{equation}\label{eq:aliasing_O_N2}
|\mathcal{F}_N(f_k)[\ell]-\mathcal{F}(f_k)[\ell]|
\le
\frac{\pi^2}{3}\,\frac{C_k}{N^2},
\qquad |\ell|\le \frac N2.
\end{equation}
We therefore choose an oversampling factor $s>1$ and take $N=2^n$ (so that we are in the radix-$2$ setting of Lemma~\ref{lem:fft-roundoff} and of the roundoff analysis in~\cite{BrisebarreEtAl2020}), with
$N\ge s(2K+1)$, compute $\mathcal{F}_N(f_k)$ by FFT, and retain the modes $|\ell|\le K$.

We refer to \cite{BrisebarreEtAl2020} for a detailed modern analysis of rounding errors in radix-$2$
Cooley--Tukey FFTs. As already emphasized by \cite{Higham1996}, the comparatively low operation
count of the FFT (only $O(N\log N)$ floating-point operations) yields normwise error bounds with
small, explicitly controlled constants.

\begin{lemma}\label{lem:fft-roundoff}
Let $x\in\mathbb{C}^N$ and let
\[
\mathcal{F}_N := N^{-1/2}\bigl(\omega_N^{jk}\bigr)_{j,k=0}^{N-1},\qquad \omega_N:=e^{-2\pi i/N},
\]
be the \emph{unitary} Fourier matrix. Denote by $y:=\mathcal{F}_Nx$ the exact transform and by $\widehat y$
the result returned by an FFT implementation in floating-point arithmetic with unit roundoff $u$.

The FFT is said to be \emph{normwise backward stable} if there exists a constant $k_N>0$ such that,
whenever $k_Nu<1$, one can represent
\[
\widehat y = \mathcal{F}_N(x+\Delta x)
\qquad\text{with}\qquad
\|\Delta x\|_2 \le \gamma_{k_N}\,\|x\|_2,
\qquad
\gamma_{k_N}:=\frac{k_Nu}{1-k_Nu}.
\]
Since $\mathcal{F}_N$ is unitary, the same bound holds in forward form:
\[
\|\widehat y-y\|_2 \le \gamma_{k_N}\,\|x\|_2 .
\]
For the radix-$2$ Cooley--Tukey FFT ($N=2^n$), one may take $k_N=O(\log_2 N)$; more precisely,
if the twiddle factors are precomputed with absolute error bounded by $c_Nu$, then a valid choice is
\[
k_N = \bigl(\sqrt2\,c_N + 4 + \sqrt2\bigr)\log_2 N ,
\]
see, e.g., \cite{BrisebarreEtAl2020}.
\end{lemma}

\begin{corollary}[Coefficient-wise enclosure from an $L^2$ bound]\label{cor:fft-coeff-enclosure}
Let $x\in\mathbb{C}^N$ be the vector of sampled values of a $2$--periodic function $f$ on the uniform grid,
let $\mathcal{F}_N(f)\in\mathbb{C}^N$ denote the exact discrete Fourier transform in the unitary normalization above,
and let $\widetilde{\mathcal{F}}_N(f)$ be the output returned by \texttt{FFTW} after applying the corresponding
$N^{-1/2}$ scaling.
If $\|\widetilde{\mathcal{F}}_N(f)-\mathcal{F}_N(f)\|_2\le \varepsilon_2$, then for each mode $\ell$,
\[
\bigl|\widetilde{\mathcal{F}}_N(f)[\ell]-\mathcal{F}_N(f)[\ell]\bigr|
\le \|\widetilde{\mathcal{F}}_N(f)-\mathcal{F}_N(f)\|_\infty
\le \|\widetilde{\mathcal{F}}_N(f)-\mathcal{F}_N(f)\|_2
\le \varepsilon_2,
\]
and hence
\[
\mathcal{F}_N(f)[\ell]\in \bigl[\widetilde{\mathcal{F}}_N(f)[\ell]-\varepsilon_2,\ \widetilde{\mathcal{F}}_N(f)[\ell]+\varepsilon_2\bigr].
\]
In particular, under the assumptions of Lemma~\ref{lem:fft-roundoff} one may take
$\varepsilon_2:=\gamma_{k_N}\,\|x\|_2$ (with $\gamma_{k_N}=k_Nu/(1-k_Nu)$).
\end{corollary}
\begin{lemma}\label{lem:Pkell_total_error}
Fix $k\in\mathbb{Z}$ and set $f_k:=e_k\circ T$.
Let
\[
P_{k,\ell}:=\mathcal{F}(f_k)[\ell]
=\frac12\int_{-1}^1 f_k(x)\,e^{-\pi i \ell x}\,dx
\]
be the exact Fourier coefficient (in our normalization), and let $N\in\mathbb{N}$.
Define the exact discrete coefficient
\[
\mathcal{F}_N(f_k)[\ell]:=\frac1N\sum_{m=0}^{N-1} f_k(x_m)\,e^{-\pi i \ell x_m},
\qquad x_m:=-1+\frac{2m}{N}.
\]
Let $\widetilde{\mathcal{F}}_N(f_k)[\ell]$ be the coefficient returned by \texttt{FFTW}.

Assume:
\begin{enumerate}
\item There exists $C_k>0$ such that
$|\mathcal{F}(f_k)[n]|\le C_k/|n|^2$ for all $n\neq 0$.
\item Using interval arithmetic we compute enclosures
$X_m\ni f_k(x_m)$, and write $x_m^{\mathrm{mid}}:=\mathrm{mid}(X_m)$ and
$r_m:=\mathrm{rad}(X_m)$. Let $x^{\mathrm{mid}}\in\mathbb{C}^N$ and $r\in\mathbb{R}_+^N$ be the corresponding vectors.
\item The FFT output satisfies
$\|\widetilde{\mathcal{F}}_N(x^{\mathrm{mid}})-\mathcal{F}_N(x^{\mathrm{mid}})\|_2\le \varepsilon_2$
for an explicitly computed $\varepsilon_2$. \label{item3}
\end{enumerate}
Then, for every retained mode $|\ell|\le N/2$, 
\begin{equation}\label{eq:total_error_bound}
\bigl|P_{k,\ell}-\widetilde{\mathcal{F}}_N(f_k)[\ell]\bigr|
\;\le\;
\underbrace{\frac{\pi^2}{3}\frac{C_k}{N^2}}_{\text{aliasing}}
\;+\;
\underbrace{\frac{\|r\|_2}{\sqrt{N}}}_{\text{interval sampling}}
\;+\;
\underbrace{\varepsilon_2}_{\text{FFT roundoff}}.
\end{equation}
\end{lemma}

\begin{proof}
Decompose
\begin{align*}
|P_{k,\ell}-\widetilde{\mathcal{F}}_N(f_k)[\ell]|
\leq&
|P_{k,\ell}-\mathcal{F}_N(f_k)[\ell]|
+
|\mathcal{F}_N(f_k)[\ell]-\mathcal{F}_N(x^{\mathrm{mid}})[\ell]|\\
&+
|\mathcal{F}_N(x^{\mathrm{mid}})[\ell]-\widetilde{\mathcal{F}}_N(x^{\mathrm{mid}})[\ell]|.
\end{align*}
For $2$--periodic $f_k$ one has the exact aliasing identity
$\mathcal{F}_N(f_k)[\ell]=\sum_{q\in\mathbb{Z}}\mathcal{F}(f_k)[\ell+qN]$, hence
\[
|P_{k,\ell}-\mathcal{F}_N(f_k)[\ell]|
\le \sum_{q\neq 0}|\mathcal{F}(f_k)[\ell+qN]|
\le 2\sum_{q\ge 1}\frac{C_k}{(qN)^2}
=\frac{\pi^2}{3}\frac{C_k}{N^2}.
\]

Let $\delta_m:=f_k(x_m)-x_m^{\mathrm{mid}}$. Then $|\delta_m|\le r_m$ and
\[
|\mathcal{F}_N(f_k)[\ell]-\mathcal{F}_N(x^{\mathrm{mid}})[\ell]|
=
\Bigl|\frac1N\sum_{m=0}^{N-1}\delta_m\,e^{-\pi i\ell x_m}\Bigr|
\le \frac1N\sum_{m=0}^{N-1}|\delta_m|
\le \frac{\|\delta\|_2}{\sqrt{N}}
\le \frac{\|r\|_2}{\sqrt{N}}.
\]

By assumption (\ref{item3}) and $\|\cdot\|_\infty\le \|\cdot\|_2$,
\[
|\mathcal{F}_N(x^{\mathrm{mid}})[\ell]-\widetilde{\mathcal{F}}_N(x^{\mathrm{mid}})[\ell]|
\le \|\mathcal{F}_N(x^{\mathrm{mid}})-\widetilde{\mathcal{F}}_N(x^{\mathrm{mid}})\|_\infty
\le \varepsilon_2.
\]
Combining the three estimates yields \eqref{eq:total_error_bound}.
\end{proof}

\begin{remark}
It is worth noting that, throughout, we take $N=2^n$ and rely on \texttt{FFTW} to compute the discrete Fourier transform.
Our error bounds assume that \texttt{FFTW} uses twiddle factors computed with a precision comparable to machine precision~\cite{JohnsonFrigo2008};
accordingly, we include an a priori overestimation of $4u$, where $u$ denotes the unit roundoff.
We also assume that the machine on which the experiments were run conforms to the IEEE~754 standard for floating-point arithmetic.

In principle, one could reimplement a radix-$2$ Cooley--Tukey FFT within our codebase (and even aim at a fully verified implementation),
but absent a full formal development it is unlikely that such an implementation would match the performance, engineering maturity,
and empirical validation of the \texttt{FFTW} development team.
For this reason, we treat \texttt{FFTW} as a numerical kernel and compensate by incorporating explicit roundoff bounds
(Lemma~\ref{lem:fft-roundoff} and Corollary~\ref{cor:fft-coeff-enclosure}); see also the publicly documented \texttt{FFTW}/benchFFT
accuracy benchmarking methodology and discussion~\cite{FFTWAccuracyMethod,FFTWAccuracyComments}.
A further step towards end-to-end formal correctness would be to rely on formally generated code, for instance via verified LLVM generation
from Isabelle/HOL as in~\cite{Lammich2019}.
\end{remark}

\section{Proof of Main Result 1}\label{sec:computability}

This section establishes the abstract computability of the stationary density for the annealed system under Gaussian noise, and proves Main Result 1.
The underlying idea is that the a priori estimate on the mixing rates in Proposition \ref{prop:mixinggaussian} allows us to bound uniformly 
the mixing rates of the operators $P_{\sigma,k}$ for all $k>K$, which in turn allows us to prove that the approximation error on the stationary measure 
goes to $0$ as $k$ goes to infinity. 

\begin{main_computability}
Let $T$ be a non-singular map of the interval, and consider the random dynamical system:
\begin{align}
X_{n+1} = \tau(T(X_n) + \Omega_\sigma(n)),
\end{align}
where $\Omega_\sigma(n)$ are i.i.d. Gaussian random variables with standard deviation $\sigma$, and $\tau$ is the periodic boundary condition (Definition~\ref{c10}).

Then, the stationary density can be approximated in $L^2$, and Birkhoff averages of any observable $\phi \in L^2$ can be enclosed to arbitrary precision.
\end{main_computability}

This computability result relies crucially on the strong regularization provided by the Gaussian noise (see Proposition~\ref{prop:mixinggaussian}). Our strategy proceeds by controlling the discretization error of the annealed transfer operator $P_\sigma$, and showing convergence of the approximated fixed point $f_{\sigma, K}$ to the true fixed point $f_\sigma$ in $L^2$ norm.

\subsection{Norm Bounds for the Discretized Operator}

Recall that \\$P_{\sigma,K}:=\pi_K N_\sigma P\pi_K$ and define the $L^1$--tail constant
\[
\|(1-\pi_K)N_\sigma\|_{L^1\to L^1}
\le \Gamma^{(1)}_{\sigma,K}
\]
(cf.\ Lemma~\ref{lem:tail_L1_Linfty}).


\begin{lemma}\label{lem:L1norm}
For all $i\in\mathbb N$,
\[
\|P_{\sigma,K}^i\|_{L^1\to L^1}\le (1+\Gamma^{(1)}_{\sigma,K})^i\,(2K+1).
\]
\end{lemma}

\begin{proof}
Using $\pi_K^2=\pi_K$ (Lemma~\ref{f42}),
\[
P_{\sigma,K}^i=(\pi_K N_\sigma P)^i\,\pi_K.
\]
By Corollary~\ref{cor:discretized_norm} and $\|P\|_{L^1\to L^1}=1$,
\[
\|\pi_K N_\sigma P\|_{L^1\to L^1}\le 1+\Gamma^{(1)}_{\sigma,K}.
\]
Moreover, with $\mathcal{F}(g)[j]=\frac12\int_{-1}^1 g(x)e^{-j\pi i x}\,dx$ we have
$|\mathcal{F}(g)[j]|\le \frac12\|g\|_{L^1}$ and $\|e^{j\pi i x}\|_{L^1([-1,1])}=2$, hence
\[
\|\pi_K g\|_{L^1}
\le \sum_{j=-K}^K |\mathcal{F}(g)[j]|\;\|e^{j\pi i x}\|_{L^1}
\le (2K+1)\|g\|_{L^1}.
\]
Combining the estimates yields the claim.
\end{proof}


\begin{lemma}\label{c61}
For all $i\in\mathbb N$,
\[
\|P_{\sigma,K}^i\|_{L^2\to L^2}
\le \sqrt{2}\,\|N_\sigma\|_{L^1\to L^2}\,(1+\Gamma^{(1)}_{\sigma,K})^{i-1},
\]
where
\[
\|N_\sigma\|_{L^1\to L^2}
=\|\tau_*\rho_\sigma\|_{L^2([-1,1])}
\le \sqrt{\frac{1}{2\sigma\sqrt{\pi}}\coth\!\Bigl(\frac{1}{2\sigma^2}\Bigr)}.
\]
\end{lemma}

\begin{proof}
Set $A:=\pi_K N_\sigma P$. Using $\pi_K^2=\pi_K$, we have $P_{\sigma,K}^i=A^i\pi_K$.
For $g\in L^2([-1,1])$, Cauchy--Schwarz and $\|\pi_K\|_{L^2\to L^2}\le 1$ give
\[
\|\pi_K g\|_{L^1}\le \sqrt{2}\,\|\pi_K g\|_{L^2}\le \sqrt{2}\,\|g\|_{L^2}.
\]
Moreover, for $f\in L^1$,
\[
\|Af\|_{L^1}\le \|\pi_K N_\sigma\|_{L^1\to L^1}\,\|P\|_{L^1\to L^1}\,\|f\|_{L^1}
\le (1+\Gamma^{(1)}_{\sigma,K})\|f\|_{L^1},
\]
and
\[
\|Af\|_{L^2}\le \|\pi_K\|_{L^2\to L^2}\,\|N_\sigma\|_{L^1\to L^2}\,\|P\|_{L^1\to L^1}\,\|f\|_{L^1}
\le \|N_\sigma\|_{L^1\to L^2}\,\|f\|_{L^1}.
\]
Therefore,
\[
\|P_{\sigma,K}^i g\|_{L^2}
=\|A^i\pi_K g\|_{L^2}
\le \sqrt{2}\,\|N_\sigma\|_{L^1\to L^2}\,(1+\Gamma^{(1)}_{\sigma,K})^{i-1}\,\|g\|_{L^2},
\]
which proves the operator bound. The expression for $\|N_\sigma\|_{L^1\to L^2}$ is Young's inequality,
and the coth bound follows from Lemma~\ref{c54} and Remark~\ref{rem:coth-bound}.
\end{proof}

\begin{lemma}\label{c60}
There exist $K,N\in\mathbb N$ such that for all $k>K$,
\[
\|P_{\sigma,k}^N|_{\mathcal U_0}\|_{L^2\to L^2}\le \frac34.
\]
\end{lemma}

\begin{proof}
Choose $N$ so that $\|P_\sigma^N|_{\mathcal U_0}\|_{L^2\to L^2}\le \frac12$
(Proposition~\ref{prop:mixinggaussian}). For $g\in\mathcal U_0$,
\[
\|P_{\sigma,k}^N g\|_{L^2}
\le \|P_\sigma^N g\|_{L^2}+\|(P_\sigma^N-P_{\sigma,k}^N)g\|_{L^2}
\le \frac12\|g\|_{L^2}+\|P_\sigma^N-P_{\sigma,k}^N\|_{L^2\to L^2}\,\|g\|_{L^2}.
\]
Now,
\[
||P_\sigma^N-P_{\sigma,k}^N||_{L^2\to L^2}\leq \sum_{i=0}^{N-1} ||P_\sigma^i||_{L^2\to L^2}||P_\sigma-P_{\sigma,k}||_{L^1\to L^2}||P_{\sigma,k}^{N-i-1}||_{L^1\to L^1}||g||_{L^1}.
\]

We can now bound each term using
Theorem~\ref{thm:tail_estimate} together with the $L^1$ growth bound in Lemma~\ref{lem:L1norm}.
The resulting discretization error is controlled by the Fourier tail of the Gaussian multiplier and
decays exponentially in $k$, so for $k$ large enough one gets
$\|P_\sigma^N-P_{\sigma,k}^N\|_{L^2\to L^2}\le \frac14$, proving the claim.
\end{proof}


\begin{theorem}\label{thm:computability}
Let $f_\sigma$ be the fixed point of $P_\sigma$, and $f_{\sigma,K}$ the fixed point of $P_{\sigma,K}$.
Then for any $\epsilon>0$ there exists $K$ such that
\[
\|f_\sigma-f_{\sigma,K}\|_{L^2}<\epsilon.
\]
\end{theorem}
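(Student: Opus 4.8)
The plan is to invoke the a posteriori bound of Theorem~\ref{c27} with the special choice \( g = f_{\sigma,K} \), the exact fixed point of the discretized operator, so that the residual term vanishes. Before doing so I would first record that \( f_{\sigma,K} \) genuinely exists and is a trigonometric polynomial of degree at most \( K \). Since \( P_{\sigma,K} = \pi_K P_\sigma \pi_K \) has range in \( \pi_K L^2 \), any fixed point is automatically such a polynomial; existence follows from the observation that \( P_\sigma \) (and hence \( P_{\sigma,K} \)) preserves the zero Fourier mode \( \mathcal{F}(\cdot)[0] \), because \( P_\sigma \) preserves integrals. Thus \( P_{\sigma,K} \) leaves invariant the affine hyperplane \( \{ f : \int f\, dm = 1 \} \) and acts on the complementary direction \( \mathcal{U}_0 \cap \pi_K L^2 \) as the contraction supplied by Lemma~\ref{c60} (for \( K \) above the threshold \( K_0 \) produced there). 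A map fixing one direction of a finite-dimensional space and strictly contracting its complement has a unique fixed point on that hyperplane, which is the normalized density \( f_{\sigma,K} \).

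With \( g = f_{\sigma,K} \) the residual in Theorem~\ref{c27} is \( \epsilon = \|P_{\sigma,K} f_{\sigma,K} - f_{\sigma,K}\|_{L^2} = 0 \). The hypotheses of that theorem hold uniformly for every \( k = K > K_0 \): by Lemma~\ref{c60} the \( N \)-step contraction constant satisfies \( C_N = \|P_{\sigma,K}^N|_{\mathcal{U}_0}\|_{L^2} \leq 3/4 < 1 \) with \( N \) independent of \( K \), while the finitely many intermediate norms \( C_i = \|P_{\sigma,K}^i|_{\mathcal{U}_0}\|_{L^2} \), \( 0 \leq i < N \), are controlled by Lemma~\ref{c61}. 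Substituting \( \epsilon = 0 \) and \( \|f_\sigma\|_{L^1} = 1 \) (as \( f_\sigma \) is a probability density) into Theorem~\ref{c27} yields
\[
\|f_\sigma - f_{\sigma,K}\|_{L^2} \leq \frac{1}{1 - C_N} \left( \sum_{i=0}^{N-1} C_i \right) \bigl(1 + \Gamma_{\sigma,K} + \|\rho_\sigma\|_{L^2(\mathbb{R})}\bigr)\, \Gamma_{\sigma,K}.
\]

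It then remains to show that the right-hand side tends to \( 0 \) as \( K \to \infty \). The prefactor \( (1 - C_N)^{-1} \leq 4 \) is bounded, and \( 1 + \Gamma_{\sigma,K} + \|\rho_\sigma\|_{L^2(\mathbb{R})} \leq 1 + \Gamma_{\sigma,0} + \|\rho_\sigma\|_{L^2(\mathbb{R})} \) is bounded since \( \Gamma_{\sigma,K} \) is decreasing in \( K \). By Lemma~\ref{c61} each \( C_i \) grows at most like \( (2K+1) \) up to \( K \)-independent constants, so \( \sum_{i=0}^{N-1} C_i = O(K) \). The decisive factor is \( \Gamma_{\sigma,K} = \frac{1}{\sigma\sqrt{2\pi}} e^{-\sigma^2 K^2 \pi^2/2} \), which decays like a Gaussian in \( K \); hence the whole bound is \( O\bigl(K\, e^{-\sigma^2 K^2 \pi^2/2}\bigr) \to 0 \), and given any \( \epsilon > 0 \) one picks \( K \) large enough. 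The main obstacle is precisely this competition: the short-time norms of the discretized operator inflate with the truncation dimension (the \( (2K+1) \) factor arising from expanding a projection in the \( L^1 \) norm), so the argument closes only because the Gaussian truncation error \( \Gamma_{\sigma,K} \) decays super-exponentially and overwhelms this polynomial growth. Keeping \( N \) fixed via the uniform mixing estimate of Lemma~\ref{c60}, rather than letting the number of inflating terms grow with \( K \), is what makes this balance work.
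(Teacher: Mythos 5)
Your proposal is correct and follows essentially the same route as the paper: apply Theorem~\ref{c27} with $g=f_{\sigma,K}$ so the residual vanishes, use Lemma~\ref{c60} for a $K$-uniform contraction exponent $N$ and Lemma~\ref{c61} for the intermediate norms $C_i$, and let the Gaussian decay of $\Gamma_{\sigma,K}$ beat the $O(2K+1)$ growth of the norm bounds. Your additional verification that $f_{\sigma,K}$ exists and is unique on the normalized hyperplane is a welcome detail the paper's short proof leaves implicit.
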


\begin{proof}
Apply Theorem~\ref{c27} to the pair of operators $P_\sigma$ and $P_{\sigma,K}$, using the uniform contraction
on $\mathcal U_0$ provided by Lemma~\ref{c60}. The perturbation size is measured by the discretization error
\[
\|P_\sigma-P_{\sigma,K}\|_{L^1\to L^2}
\;\;\le\;\;
\|(1-\pi_K)N_\sigma\|_{L^1\to L^2}
\;+\;
\|\pi_K(N_\sigma P-\!N_\sigma P\pi_K)\|_{L^1\to L^2},
\]
which is controlled by the Fourier tail of the Gaussian multiplier and decays exponentially in $K$
Lemma~\ref{thm:tail_estimate}.
The remaining operator factors in Theorem~\ref{c27} are bounded by Lemma~\ref{c61}, which gives
\[
\|P_{\sigma,K}^i\|_{L^2\to L^2}
\le \sqrt{2}\,\|N_\sigma\|_{L^1\to L^2}\,(1+\Gamma^{(1)}_{\sigma,K})^{i-1},
\qquad
\|N_\sigma\|_{L^1\to L^2}
=\|\tau_*\rho_\sigma\|_{L^2([-1,1])}
\]
note that Lemma~\ref{c61} yields the $C_i$ for Theorem~\ref{c27}.  Since $\Gamma^{(1)}_{\sigma,K}\to 0$ exponentially as $K\to\infty$, the error bound furnished by
Theorem~\ref{c27} can be made smaller than $\epsilon$ by taking $K$ sufficiently large, proving the claim.
\end{proof}

\begin{corollary}\label{cor:BirkhoffAvg}
Let $\phi\in L^2(m)$. Then the Birkhoff average
\[
\frac{1}{N}\sum_{n=0}^{N-1}\phi(X_n)
\]
converges almost surely to $\int \phi\,f_\sigma\,dm$. Moreover, for any $\epsilon>0$ there exists $K$ such that
\[
\Bigl|\int \phi\,f_\sigma\,dm-\int \phi\,f_{\sigma,K}\,dm\Bigr|<\epsilon.
\]
\end{corollary}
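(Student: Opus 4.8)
The statement bundles two claims: the almost-sure convergence of the Birkhoff average, and the convergence of the approximate integrals. The plan is to dispatch each one as a short consequence of results already established, since the corollary is essentially a repackaging of Theorem~\ref{c18} together with Theorem~\ref{thm:computability}.

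For the almost-sure convergence I would first note that by Proposition~\ref{prop:unique} the annealed operator $P_\sigma$ has a unique fixed point $f_\sigma$, so the stationary measure $\mu_\sigma = f_\sigma\,dm$ is unique and hence ergodic. Before invoking the Birkhoff theorem for random systems (Theorem~\ref{c18}) I must verify the integrability hypothesis $\phi\in L^1(\mu_\sigma)$. Since $f_\sigma$ is real-analytic on the compact interval $[-1,1]$ (Corollary~\ref{cor:analytic}), it is bounded, so for $\phi\in L^2(m)$ one has $\int|\phi|\,d\mu_\sigma \le \|f_\sigma\|_{L^\infty}\int|\phi|\,dm \le \sqrt{2}\,\|f_\sigma\|_{L^\infty}\|\phi\|_{L^2}<\infty$, using Cauchy--Schwarz on the finite-measure interval. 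Theorem~\ref{c18} then yields the almost-sure convergence of the Birkhoff average to $\int\phi\,d\mu_\sigma=\int\phi\,f_\sigma\,dm$.

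For the approximation bound I would control the difference directly by Cauchy--Schwarz:
\[
\left|\int\phi\,f_\sigma\,dm-\int\phi\,f_{\sigma,K}\,dm\right|
=\left|\int\phi\,(f_\sigma-f_{\sigma,K})\,dm\right|
\le \|\phi\|_{L^2}\,\|f_\sigma-f_{\sigma,K}\|_{L^2}.
\]
Assuming $\phi\not\equiv 0$ (the case $\phi\equiv 0$ being trivial), I would then apply Theorem~\ref{thm:computability} with tolerance $\epsilon/\|\phi\|_{L^2}$ to obtain a $K$ for which $\|f_\sigma-f_{\sigma,K}\|_{L^2}<\epsilon/\|\phi\|_{L^2}$, and the claimed bound follows immediately. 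Note that this step does not require $f_{\sigma,K}$ to be normalized as a probability density; the $L^2$ density estimate alone suffices.

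There is no genuinely hard step here. The only point demanding care is the integrability check $\phi\in L^1(\mu_\sigma)$, which rests on the boundedness of the analytic density $f_\sigma$ together with the observation that on the finite interval $[-1,1]$ membership in $L^2$ implies membership in $L^1$. This is also exactly what makes $\|\phi\|_{L^2}$ the natural constant tying the $L^2$ density error supplied by Theorem~\ref{thm:computability} to the error in the observable integral.
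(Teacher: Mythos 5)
Your proposal is correct and follows essentially the same route as the paper: uniqueness (Proposition~\ref{prop:unique}) plus the random Birkhoff theorem (Theorem~\ref{c18}) for the almost-sure convergence, and Cauchy--Schwarz combined with Theorem~\ref{thm:computability} for the approximation bound. The only difference is that you explicitly verify the integrability hypothesis $\phi\in L^1(\mu_\sigma)$ via the boundedness of the analytic density, a detail the paper leaves implicit.
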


\begin{proof}
Uniqueness of the stationary measure (Proposition~\ref{prop:unique}) yields ergodicity, hence the almost-sure
convergence follows from the Birkhoff ergodic theorem (Theorem~\ref{c18}). For the approximation error, by
Cauchy--Schwarz,
\[
\Bigl|\int \phi\,(f_\sigma-f_{\sigma,K})\,dm\Bigr|
\le \|\phi\|_{L^2}\,\|f_\sigma-f_{\sigma,K}\|_{L^2},
\]
and the claim follows from Theorem~\ref{thm:computability}.
\end{proof}

\section{Main Result 2: the needed estimates}
\label{chapter6}

We now apply the theory developed in previous sections to compute a rigorous enclosure of the Lyapunov exponent for the noisy dynamical system
\begin{equation} 
X_{n+1} = \tau(T(X_n) + \Omega_\sigma(n)),
\end{equation}
where $\Omega_\sigma(n)$ are i.i.d.~Gaussian random variables with standard deviation $\sigma$, and $\tau$ is the periodic boundary projection.

Our main result will be to estimate the Lyapunov exponent
\begin{align*}
\lambda(\alpha, \beta, \sigma) = \int_{-1}^1 \ln |T'_{\alpha,\beta}(x)| f_\sigma(x) \, dx,
\end{align*}
where $f_\sigma$ is the stationary density of the annealed Perron-Frobenius operator.

\subsection{Integrability of the Observable}

\begin{lemma}\label{c45}
	Since $\ln|T'_{\alpha,\beta}|\in L^1([-1,1])$, $\mu_{\sigma}=f_{\sigma}dm$ and $f_{\sigma}$ is bounded, we have
	\[
	\ln|T'_{\alpha,\beta}|\in L^1(\mu _{\sigma}).
	\]
	Moreover, the constant
	\[
	\Upsilon := \|\ln|T'_{\alpha,\beta}|\|_{L^2([-1,1])} = \sqrt{2} \left( (\ln(\alpha(1+\beta))-(\alpha-1))^2 + (\alpha-1)^2 \right)^{1/2}
	\]
	will be used in later bounds.
\end{lemma}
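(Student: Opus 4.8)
The plan is to compute $\ln|T'_{\alpha,\beta}|$ explicitly and then dispatch each claim by direct estimation; all three assertions are elementary once the derivative is in hand. Differentiating $T_{\alpha,\beta}(x) = \beta - (1+\beta)|x|^{\alpha}$ away from the origin gives $|T'_{\alpha,\beta}(x)| = \alpha(1+\beta)|x|^{\alpha-1}$, so that
\[
\ln|T'_{\alpha,\beta}(x)| = \ln(\alpha(1+\beta)) + (\alpha-1)\ln|x|.
\]
The only singularity is the logarithmic one at $x = 0$, and this is the structural fact driving the whole computation.

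For $L^1([-1,1])$ integrability I would observe that the constant term is trivially integrable, while $\int_{-1}^{1} \bigl|\ln|x|\bigr|\,dx = 2\int_0^1 |\ln x|\,dx = 2 < \infty$; hence $\ln|T'_{\alpha,\beta}| \in L^1([-1,1])$. For integrability against $\mu_\sigma$ I would use the boundedness of $f_\sigma$, which is available from Corollary~\ref{cor:analytic}: the stationary density is real-analytic on the compact interval $[-1,1]$, therefore continuous and bounded. Then
\[
\int_{-1}^{1} \bigl|\ln|T'_{\alpha,\beta}|\bigr|\, d\mu_\sigma \leq \|f_\sigma\|_{\infty} \int_{-1}^{1} \bigl|\ln|T'_{\alpha,\beta}|\bigr|\, dm < \infty,
\]
which gives $\ln|T'_{\alpha,\beta}| \in L^1(\mu_\sigma)$.

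For the $L^2$-norm, writing $a = \ln(\alpha(1+\beta))$ and $b = \alpha - 1$, symmetry reduces the integral to
\[
\|\ln|T'_{\alpha,\beta}|\|_{L^2}^2 = 2\int_0^1 (a + b\ln x)^2\, dx.
\]
Expanding the square and invoking the standard moments $\int_0^1 (\ln x)^n\, dx = (-1)^n n!$ for $n = 0,1,2$ (that is, $1$, $-1$, and $2$) yields $2(a^2 - 2ab + 2b^2) = 2\bigl((a-b)^2 + b^2\bigr)$, which is exactly $\Upsilon^2$; taking square roots produces the stated formula.

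I do not expect any genuine obstacle here, as the argument is a sequence of elementary integral evaluations. The only point deserving care is the boundedness of $f_\sigma$, which I would cite from the analyticity proved earlier rather than re-establish, and the clean bookkeeping of the three logarithmic moments that combine to give the claimed closed form for $\Upsilon$.
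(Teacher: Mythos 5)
Your proposal is correct and follows essentially the same route as the paper: both rest on the boundedness of $f_\sigma$ and the identical explicit evaluation of $2\int_0^1(a+b\ln x)^2\,dx=2\bigl((a-b)^2+b^2\bigr)$. The only (immaterial) difference is that for the $L^1(\mu_\sigma)$ claim you pair $\|f_\sigma\|_\infty$ with the $L^1(m)$-norm of $\ln|T'_{\alpha,\beta}|$, whereas the paper applies Cauchy--Schwarz with $\|f_\sigma\|_{L^2}$ and $\Upsilon$; both hinge on the same boundedness hypothesis, and your version is if anything slightly more careful in keeping the absolute values inside the integral.
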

\begin{proof}
	Using Hölder's inequality:
	\[
	\int_{-1}^{1}\ln|T'_{\alpha,\beta}|\,d\mu_{\sigma} = \int_{-1}^{1}\ln|T'_{\alpha,\beta}| f_{\sigma} \, dm
	\leq \|\ln|T'_{\alpha,\beta}|\|_{L^2} \|f_{\sigma}\|_{L^2}.
	\]
	Since \(f_\sigma\) is bounded, it lies in \(L^2\), and we compute
	\begin{align*}
	\|\ln|T'_{\alpha,\beta}|\|_{L^2}^2 
	&= 2 \int_0^1 \left( \ln(\alpha(1+\beta)) + (\alpha - 1)\ln x \right)^2 dx \\
	&= 2\left( \ln^2(\alpha(1+\beta)) - 2\ln(\alpha(1+\beta))(\alpha - 1) + 2(\alpha - 1)^2 \right) \\
	&= 2\left( (\ln(\alpha(1+\beta)) - (\alpha - 1))^2 + (\alpha - 1)^2 \right),
	\end{align*}
	which gives the expression for \(\Upsilon\) as stated.
\end{proof}

\subsection{Approximating the Lyapunov Exponent}

The choice of a symmetrized trigonometric polynomial approximating $f_\sigma$, as mentioned in Remark \ref{rem:symetrized} is motivated by the fact that this ensures the Lyapunov exponent is a real number.

\begin{definition}
Let $f_{\sigma,k,s}$ be a symmetrized trigonometric polynomial approximating $f_\sigma$, we define
\begin{align*}
\lambda_s(\alpha,\beta,\sigma, k) = \int_{-1}^1 \ln |T'_{\alpha,\beta}(x)| f_{\sigma, k, s}(x) \, dx.
\end{align*}
\end{definition}

\begin{theorem} \label{p4}
Let $f_{\sigma,k,s}$ be a symmetrized trigonometric polynomial approximating $f_\sigma$. Then
\begin{align*}
|\lambda(\alpha,\beta,\sigma) - \lambda_s(\alpha,\beta,\sigma,k)| \leq \Upsilon \|f_\sigma - f_{\sigma,k,s}\|_{L^2},
\end{align*}
where $\Upsilon = \sqrt{2} ((\ln(\alpha(1+\beta))-(\alpha-1))^2 + (\alpha-1)^2)^{1/2}$.
\end{theorem}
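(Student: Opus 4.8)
The plan is to recognize that this is a one-line consequence of Hölder's inequality, once the observable $\ln|T'_{\alpha,\beta}|$ is known to be square-integrable. The two quantities $\lambda$ and $\lambda_s$ are integrals of the \emph{same} fixed observable against two different densities, so their difference is an integral of that observable against the density error $f_\sigma - f_{\sigma,k,s}$. Bounding this pairing by the product of $L^2$ norms immediately produces the stated estimate, provided one can identify the $L^2$ norm of the observable with the constant $\Upsilon$.

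First I would use the two defining integral expressions for $\lambda(\alpha,\beta,\sigma)$ and $\lambda_s(\alpha,\beta,\sigma,k)$ together with linearity of the integral to write
\begin{align*}
\lambda(\alpha,\beta,\sigma) - \lambda_s(\alpha,\beta,\sigma,k)
= \int_{-1}^1 \ln|T'_{\alpha,\beta}(x)|\,\bigl(f_\sigma(x) - f_{\sigma,k,s}(x)\bigr)\,dx.
\end{align*}
Next I would apply the Cauchy--Schwarz inequality (Hölder with exponents $2,2$) to the right-hand side, giving
\begin{align*}
\bigl|\lambda(\alpha,\beta,\sigma) - \lambda_s(\alpha,\beta,\sigma,k)\bigr|
\leq \bigl\|\ln|T'_{\alpha,\beta}|\bigr\|_{L^2([-1,1])}\,\bigl\|f_\sigma - f_{\sigma,k,s}\bigr\|_{L^2([-1,1])}.
\end{align*}
Finally I would invoke Lemma~\ref{c45}, which both establishes that $\ln|T'_{\alpha,\beta}|$ lies in $L^2([-1,1])$ and computes its $L^2$ norm explicitly as $\Upsilon = \sqrt{2}\,((\ln(\alpha(1+\beta))-(\alpha-1))^2 + (\alpha-1)^2)^{1/2}$, so that substituting this value yields exactly the claimed inequality.

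The only genuine content of the argument, and hence the step I would treat as the main point rather than the main obstacle, is the square-integrability of the observable: since $T'_{\alpha,\beta}$ vanishes at $x=0$, the function $\ln|T'_{\alpha,\beta}|$ carries a logarithmic singularity there, and one must confirm that this singularity is square-integrable so that the Cauchy--Schwarz bound is meaningful and the constant $\Upsilon$ is finite. This is precisely what the explicit integral computation in Lemma~\ref{c45} secures, so the present theorem reduces to a direct application of that lemma; no further estimation is required.
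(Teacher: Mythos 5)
Your proposal is correct and matches the paper's proof exactly: both write the difference as $\int \ln|T'|\,(f_\sigma - f_{\sigma,k,s})\,dx$, apply Cauchy--Schwarz, and identify $\|\ln|T'_{\alpha,\beta}|\|_{L^2}$ with $\Upsilon$ via the computation in Lemma~\ref{c45}. Nothing further is needed.
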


\begin{proof}
By Cauchy-Schwarz,
\begin{align*}
\left| \int_{-1}^1 \ln |T'| (f_\sigma - f_{\sigma,k,s}) \, dx \right|
\leq \|\ln |T'|\|_{L^2} \|f_\sigma - f_{\sigma,k,s}\|_{L^2} = \Upsilon \|f_\sigma - f_{\sigma,k,s}\|_{L^2}.
\end{align*}
\end{proof}

\subsection{Fourier Expansion of the Observable}

\begin{lemma}\label{f39}
The Fourier coefficients of $\ln|T'_{\alpha,\beta}|$ on $[-1,1]$ are
\begin{align*}
\mathcal{F}[0] &= \ln(\alpha(1+\beta)) - (\alpha-1), \\
\mathcal{F}[j] &= -\frac{(\alpha-1)}{j\pi} \int_0^{j\pi} \frac{\sin(x)}{x} dx, \quad j \neq 0.
\end{align*}
\end{lemma}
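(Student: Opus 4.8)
The plan is to compute the coefficients directly from the explicit form of the map. First I would differentiate $T_{\alpha,\beta}(x)=\beta-(1+\beta)|x|^\alpha$, obtaining $|T'_{\alpha,\beta}(x)| = \alpha(1+\beta)|x|^{\alpha-1}$ for $x\neq 0$, so that
\[
\ln|T'_{\alpha,\beta}(x)| = \ln(\alpha(1+\beta)) + (\alpha-1)\ln|x|.
\]
This splits the observable into a constant plus a multiple of $\ln|x|$, reducing every Fourier coefficient to the two elementary integrals $\int_{-1}^1 e^{-j\pi i x}\,dx$ and $\int_{-1}^1 \ln|x|\,e^{-j\pi i x}\,dx$.

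For the zeroth coefficient I would use $\mathcal{F}[0]=\frac12\int_{-1}^1 \ln|T'_{\alpha,\beta}|\,dx$; the constant part contributes $\ln(\alpha(1+\beta))$, while the logarithmic part reduces by evenness to $(\alpha-1)\int_0^1 \ln x\,dx = -(\alpha-1)$, giving the stated value of $\mathcal{F}[0]$.

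For $j\neq 0$ the constant part drops out, since $\int_{-1}^1 e^{-j\pi i x}\,dx = 2\sin(j\pi)/(j\pi)=0$. Because $\ln|x|$ is even, only the cosine part of $e^{-j\pi i x}$ survives the integration over the symmetric interval, so that
\[
\mathcal{F}[j] = (\alpha-1)\int_0^1 \ln x\,\cos(j\pi x)\,dx.
\]
An integration by parts with $v=\sin(j\pi x)/(j\pi)$ annihilates the boundary contributions and leaves $-\tfrac{1}{j\pi}\int_0^1 \tfrac{\sin(j\pi x)}{x}\,dx$; the substitution $t=j\pi x$ then turns this into the sine integral $\int_0^{j\pi}\tfrac{\sin t}{t}\,dt$, producing exactly the claimed formula.

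The main technical point is the behaviour at the singularity $x=0$: I would record that $\ln|x|$ is integrable on $[-1,1]$ (so all the coefficients are well-defined), that the improper integral $\int_0^1 \ln x\,\cos(j\pi x)\,dx$ converges, and—most importantly for the integration by parts—that the boundary term $\ln x\,\sin(j\pi x)/(j\pi)$ vanishes at $0$, since $\sin(j\pi x)\sim j\pi x$ forces $\ln x\,\sin(j\pi x)\to 0$ as $x\to 0^+$ (and it vanishes trivially at $x=1$ because $\ln 1=0$). Once this is dispatched, the remaining steps are routine.
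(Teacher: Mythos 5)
Your computation is correct, and it is exactly the (routine) direct calculation the paper has in mind: the paper states Lemma~\ref{f39} without proof, so your decomposition of $\ln|T'_{\alpha,\beta}(x)|$ into $\ln(\alpha(1+\beta)) + (\alpha-1)\ln|x|$, the evenness reduction, and the integration by parts leading to $\int_0^{j\pi}\frac{\sin t}{t}\,dt$ supply precisely the missing argument. Your care with the singularity at $x=0$ (integrability of $\ln|x|$ and the vanishing of the boundary term $\ln x\,\sin(j\pi x)/(j\pi)$) addresses the only nontrivial point, so nothing further is needed.
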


\begin{remark}
Note that to compute the Fourier expansion of $\ln|T'_{\alpha,\beta}|$ we only need to compute rigorously $\int_{0}^{j\pi}\frac{1}{x}\sin(x)\,dx$. 
	The procedure we use to compute $\int_{0}^{j\pi}\frac{1}{x}\sin(x)\,dx$ combines Taylor models and power series expansions, ensuring both rigor and numerical efficiency:

\begin{itemize}
	\item For intervals \([l\pi, (l+1)\pi]\) where \(l \geq 1\), we use high-order Taylor models to approximate the integrand and compute verified bounds for the integral.
	\item Near the singularity at \(x = 0\), we rely on the power series expansion of \(\frac{\sin x}{x}\), integrating term by term to compute the integral over \([0, \pi]\) with controlled truncation error.
\end{itemize}

All calculations are implemented in interval arithmetic, guaranteeing that the computed bounds are mathematically rigorous. The full computation, can be inspected in full detail in notebook
\begin{center}
 \href{https://github.com/orkolorko/PlateauExperiment.jl/blob/main/notebooks/one_experiment.ipynb}{PlateauExperiment.jl/notebooks/one\_experiment.ipynb}.
\end{center}
\end{remark}

\subsection{Computation Strategy}

Let $\widetilde{f}_{\sigma,k,s}$ be the approximation of $f_\sigma$ on $[0,1]$ obtained via Fourier projection after coordinate rescaling.

\begin{theorem} \label{p5}
The approximate Lyapunov exponent is computed as
\begin{align*}
\lambda_s(\alpha,\beta,\sigma,k) = & \; [\ln(\alpha(1+\beta)) - (\alpha -1)] \cdot \mathcal{F}_{[0,1]}(\widetilde{f}_{\sigma,k,s})[0] \\
& + \sum_{j=1}^{k} \left( \frac{2(\alpha-1)}{j\pi} \int_0^{j\pi} \frac{\sin(x)}{x} dx \right) \cdot \mathcal{F}_{[0,1]}(\widetilde{f}_{\sigma,k,s})[j]
\end{align*}
where $\mathcal{F}_{[0,1]}$ is the Fourier transform on $[0,1]$. 
\end{theorem}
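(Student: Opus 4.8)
The plan is to evaluate the defining integral $\lambda_s(\alpha,\beta,\sigma,k) = \int_{-1}^1 \ln |T'_{\alpha,\beta}(x)|\, f_{\sigma,k,s}(x)\, dx$ by expanding both factors in the Fourier basis and invoking orthogonality, exploiting two structural facts. First, the observable $\ln|T'_{\alpha,\beta}|$ is even: since $T_{\alpha,\beta}(x)=\beta-(1+\beta)|x|^\alpha$ depends on $x$ only through $|x|$, we have $\ln|T'_{\alpha,\beta}(x)| = \ln(\alpha(1+\beta)) + (\alpha-1)\ln|x|$. Second, the approximant $f_{\sigma,k,s}$ is even by construction, being the \emph{symmetrized} trigonometric polynomial. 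Both functions are therefore real and even, so their $[-1,1]$-Fourier coefficients are real and symmetric under $j\mapsto -j$.

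First I would write $f_{\sigma,k,s}=\sum_{|j|\le k}c_j e^{j\pi i x}$ with $c_j=c_{-j}\in\mathbb{R}$, and use linearity together with the identity $\int_{-1}^1 \ln|T'_{\alpha,\beta}|\, e^{j\pi i x}\,dx = 2\,\mathcal{F}(\ln|T'_{\alpha,\beta}|)[j]$ (from the definition of $\mathcal{F}$ and evenness) to obtain the finite sum $\lambda_s = 2\sum_{|j|\le k} c_j\,\mathcal{F}(\ln|T'_{\alpha,\beta}|)[j]$; this is just Parseval's identity for the Fourier basis, with the range automatically truncated at $|j|\le k$ because $f_{\sigma,k,s}$ is a trigonometric polynomial of degree $k$. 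Crucially, this truncation is \emph{exact} and introduces no error into $\lambda_s$. Pairing the $+j$ and $-j$ contributions via the even symmetry then collapses the expression to a sum over $j=0,\dots,k$ only.

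Then I would pass to the half-interval $[0,1]$: evenness gives $\int_{-1}^1=2\int_0^1$, and the restricted density, after the coordinate rescaling, is $\widetilde f_{\sigma,k,s}$ with coefficients $\mathcal{F}_{[0,1]}(\widetilde f_{\sigma,k,s})$. Rewriting $e^{\pm j\pi i x}$ as cosines converts the pairing into the orthogonality relation $\int_0^1\cos(j\pi x)\cos(l\pi x)\,dx=\tfrac12\delta_{jl}$ (with value $1$ at $j=l=0$). Substituting the explicit coefficients from Lemma~\ref{f39}, namely $\mathcal{F}[0]=\ln(\alpha(1+\beta))-(\alpha-1)$ and $\mathcal{F}[j]=-\tfrac{\alpha-1}{j\pi}\int_0^{j\pi}\tfrac{\sin x}{x}\,dx$, reproduces exactly the stated zero-frequency term $[\ln(\alpha(1+\beta))-(\alpha-1)]\,\mathcal{F}_{[0,1]}(\widetilde f_{\sigma,k,s})[0]$ and the sum $\sum_{j=1}^k\big(\tfrac{2(\alpha-1)}{j\pi}\int_0^{j\pi}\tfrac{\sin x}{x}\,dx\big)\mathcal{F}_{[0,1]}(\widetilde f_{\sigma,k,s})[j]$. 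The only delicate point is the bookkeeping of normalization constants: the factors of $2$ arising from Parseval on $[-1,1]$, from the reduction to $[0,1]$, from the cosine orthogonality constant $\tfrac12$, and from the rescaling convention defining $\mathcal{F}_{[0,1]}$ must all be tracked so that they combine into precisely the coefficients written above, including the sign that gets absorbed into $-\mathcal{F}[j]$. This constant-chasing, rather than any analytic difficulty, is the main obstacle, and it is resolved once the rescaling convention for $\widetilde f_{\sigma,k,s}$ is fixed.
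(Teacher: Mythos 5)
The paper states Theorem~\ref{p5} without any proof, so there is nothing to compare against; your route --- expand both factors in the Fourier basis, use evenness of $\ln|T'_{\alpha,\beta}|$ and of the symmetrized polynomial, invoke Parseval (exact here because $f_{\sigma,k,s}$ is a trigonometric polynomial of degree $k$), and substitute the explicit coefficients of Lemma~\ref{f39} --- is the natural one and clearly what the authors intend. The only unresolved point is the one you flag yourself: the paper never actually defines $\mathcal{F}_{[0,1]}$ or the rescaling producing $\widetilde f_{\sigma,k,s}$, and the stated formula only holds under a specific normalization (note that the coefficient $\frac{2(\alpha-1)}{j\pi}\int_0^{j\pi}\frac{\sin x}{x}\,dx$ equals $-2\,\mathcal{F}(\ln|T'_{\alpha,\beta}|)[j]$, so a factor and a sign must be absorbed into the convention for $\mathcal{F}_{[0,1]}(\widetilde f_{\sigma,k,s})[j]$); your constant-chasing is therefore deferred rather than completed, but that is a defect of the theorem's statement rather than of your argument.
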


\section{Code}

The code used to perform our rigorous computations is publicly available at the following link: \\
\begin{center}
\href{https://github.com/orkolorko/PlateauExperiment.jl}{\texttt{https://github.com/orkolorko/PlateauExperiment.jl}}.
\end{center}

This program enables the certified computation of tight intervals that contain the Lyapunov exponent of the noise-driven dynamical system described in~\eqref{p3}, for any given values of the parameters \(\alpha\), \(\beta\), and \(\sigma\). All computations are performed with rigorous error bounds using interval arithmetic and validated numerics.


\section*{Acknowledgements}
I.N. was partially supported by the Posgraduate Program in Mathematics at UFRJ, CAPES–Finance Code 001, CNPq Projeto Universal No. 404943/2023-3, CAPES–PRINT No. 88881.311616/2018-00, and CAPES–STINT No. 88887.15574\\6/2017-00.

C.L.V. was partially supported by Posgraduate Program in Mathematics at UFRJ, CAPES-Process No. 88882.331108/2019-01.

The authors would like to thank Prof. Yuzuru Sato for introducing them to the fascinating topic of noise-induced phenomena and for generously sharing his deep understanding of their phenomenological behavior.

The authors used ChatGPTv4 to polish the text for spelling, grammar and general style.

\FloatBarrier
\bibliography{nio_unimodal_gauss}{}

@article{RY,
title = {Ulam meets {Turing}: constructing quadratic maps with non-computable {SRB} measures},
journal = {arXiv preprint arXiv:2501.00006},
year = {2024},
author = {C. Rojas and M. Yampolsky}
}

@article{Froyland_2014,
doi = {10.1088/0951-7715/27/4/647},
url = {https://doi.org/10.1088/0951-7715/27/4/647},
year = {2014},
month = {mar},
publisher = {IOP Publishing},
volume = {27},
number = {4},
pages = {647},
author = {Froyland, Gary and González-Tokman, Cecilia and Quas, Anthony},
title = {Stability and approximation of random invariant densities for Lasota–Yorke map cocycles},
journal = {Nonlinearity},
abstract = {We establish stability of random absolutely continuous invariant measures (acims) for cocycles of random Lasota–Yorke maps under a variety of perturbations. Our family of random maps need not be close to a fixed map; thus, our results can handle very general driving mechanisms. We consider (i) perturbations via convolutions, (ii) perturbations arising from finite-rank transfer operator approximation schemes and (iii) static perturbations, perturbing to a nearby cocycle of Lasota–Yorke maps. The former two results provide a rigorous framework for the numerical approximation of random acims using a Fourier-based approach and Ulam's method, respectively; we also demonstrate the efficacy of these schemes.}
}

@article{BGR,
title = {Noise vs Computational
intractability in dynamics},
journal = {ITCS '12 Proceedings of the 3rd Innovations in
Theoretical Computer Science Conference},
pages = {128-141},
year = {2012},
author = {M. Braverman and A. Grigo and C. Rojas}
}

@article{GNS,
	title = {An elementary way to rigorously estimate convergence to equilibrium and escape rates},
	journal = { Journal of Computational Dynamics},
	year = {2015},
volume = {2},
number = {1},
pages = {51-64},
	doi = { doi: 10.3934/jcd.2015.2.51 },
	author = {S. Galatolo and I. Nisoli and B. Saussol.}
}

@article{BM,
title = {The exact rate of approximation in {Ulam's} method},
journal = {Discrete and Continuous Dynamical Systems},
volume = {7},
number = {1},
pages = {219-235},
year = {2001},
author = {C. Bose and R. Murray}
}

@article{MSDGG,
	title = {Arnold Maps with Noise: Differentiability and Non-monotonicity of the Rotation Number},
	journal = {Journal of Statistical Physics},
	year = {2019},
	publisher = {Springer Science and Business Media LLC},
	doi = {10.1007/s10955-019-02421-1},
	author = {Marangio, L. and Sedro, J. and Galatolo, S. and Garbo, A. Di and Ghil, M.}
}

@misc{BS,
  doi = {10.48550/ARXIV.2004.03534},
  
  url = {https://arxiv.org/abs/2004.03534},
  
  author = {O. F. Bandtlow and J. Slipantschuk},
  
  keywords = {Dynamical Systems (math.DS), Numerical Analysis (math.NA), Chaotic Dynamics (nlin.CD), FOS: Mathematics, FOS: Mathematics, FOS: Physical sciences, FOS: Physical sciences, 37M25 (Primary), 37C30, 37E05, 37E10, 65D05 (Secondary)},
  
  title = {Lagrange approximation of transfer operators associated with holomorphic data},
  
  publisher = {arXiv},
  
  year = {2020},
  
  copyright = {arXiv.org perpetual, non-exclusive license}
}

@article{FC,
	doi = {10.1088/1361-6544/ab987e},
	url = {https://doi.org/10.1088/1361-6544/ab987e},
	year = 2020,
	month = {oct},
	publisher = {{IOP} Publishing},
	volume = {33},
	number = {11},
	pages = {6244--6296},
	author = {H. Crimmins and G. Froyland},
	title = {Fourier approximation of the statistical properties of {Anosov} maps on tori},
	journal = {Nonlinearity},
	abstract = {We study the stability of statistical properties of Anosov maps on tori by examining the stability of the spectrum of an analytically twisted Perron–Frobenius operator on the anisotropic Banach spaces of Gouëzel and Liverani (2006 Ergod. Theor. Dyn. Syst. 26 189–217). By extending our previous work in Crimmins and Froyland (2019 Ann. Henri Poincaré 20 3113–3161), we obtain the stability of various statistical properties (the variance of a CLT and the rate function of an LDP) of Anosov maps to general perturbations, including new classes of numerical approximations. In particular, we obtain new results on the stability of the rate function under deterministic perturbations. As a key application, we focus on perturbations arising from numerical schemes and develop two new Fourier-analytic methods for efficiently computing approximations of the aforementioned statistical properties. This includes the first example of a rigorous scheme for approximating the peripheral spectral data of the Perron–Frobenius operator of an Anosov map without mollification. We consequently obtain the first rigorous numerical methods for estimating the variance and rate function for Anosov maps.}
}

@incollection{DelJu02,
title = {Chapter 5 - {Set} Oriented Numerical Methods for Dynamical Systems},
editor = {Bernold Fiedler},
series = {Handbook of Dynamical Systems},
publisher = {Elsevier Science},
volume = {2},
pages = {221-264},
year = {2002},
booktitle = {Handbook of Dynamical Systems},
issn = {1874-575X},
doi = {https://doi.org/10.1016/S1874-575X(02)80026-1},
url = {https://www.sciencedirect.com/science/article/pii/S1874575X02800261},
author = {M. Dellnitz and O. Junge}
}

@article{GaNi2, 
title={Rigorous computation of invariant measures and fractal dimension for maps with contracting fibers: {2D} {Lorenz}-like maps}, 
volume={36}, 
DOI={10.1017/etds.2014.145}, 
number={6}, 
journal={Ergodic Theory and Dynamical Systems}, 
publisher={Cambridge University Press}, 
author={Galatolo, S. and Nisoli, I.}, 
year={2016}, 
pages={1865–1891}
}

@article{KMY,
	doi = {10.1088/0951-7715/11/1/004},
	url = {https://doi.org/10.1088/0951-7715/11/1/004},
	year = 1998,
	month = {jan},
	publisher = {{IOP} Publishing},
	volume = {11},
	number = {1},
	pages = {27--46},
	author = {M. Keane and R. Murray and L. S. Young},
	title = {Computing invariant measures for expanding circle maps},
	journal = {Nonlinearity}
}

@article{L,
	doi = {10.1088/0951-7715/14/3/303},
	url = {https://doi.org/10.1088/0951-7715/14/3/303},
	year = 2001,
	month = {mar},
	publisher = {{IOP} Publishing},
	volume = {14},
	number = {3},
	pages = {463--490},
	author = {C. Liverani},
	title = {Rigorous numerical investigation of the statistical properties of piecewise expanding maps. {A} feasibility study},
	journal = {Nonlinearity},
	abstract = {I explore the concrete applicability of recent theoretical results to the rigorous computation of relevant statistical properties of a simple class of dynamical systems: piecewise expanding maps.}
}

@Article{PJ99,
author={Pollicott, M.
and Jenkinson, O.},
title={Computing Invariant Densities and Metric Entropy},
journal={Communications in Mathematical Physics},
year={2000},
month={May},
day={01},
volume={211},
number={3},
pages={687-703},
issn={1432-0916},
doi={10.1007/s002200050832},
url={https://doi.org/10.1007/s002200050832}
}

@book {Ulam,
AUTHOR = {Ulam, S. M.},
TITLE = {A collection of mathematical problems},
SERIES = {Interscience Tracts in Pure and Applied Mathematics, no. 8},
PUBLISHER = {Interscience Publishers, New York-London},
YEAR = {1960},
PAGES = {xiii+150},
MRCLASS = {00.00},
MRNUMBER = {0120127 (22 \#10884)},
MRREVIEWER = {A. Dvoretzky},
BOEKCODE = {00-00},

}

@article{BB,
title = {Invariant densities and escape rates: Rigorous and computable approximations in the L infinity-norm},
journal = {Nonlinear Analysis: Theory, Methods \& Applications},
volume = {74},
number = {13},
pages = {4481-4495},
year = {2011},
issn = {0362-546X},
doi = {https://doi.org/10.1016/j.na.2011.04.012},
url = {https://www.sciencedirect.com/science/article/pii/S0362546X11002070},
author = {W. Bahsoun and C. Bose},
keywords = {Interval maps, Transfer operator, Invariant densities, Escape rates, Approximation, Spectral perturbation},
}

@Article{G1,
author={S. Galatolo},
title={Statistical properties of dynamics. {I}ntroduction to the functional analytic approach},
journal={arXiv:1510.02615},
year={2022}
}

@article {GMNP,
    AUTHOR = {S. Galatolo and M. Monge and I. Nisoli and F. Poloni},
     TITLE = {A general framework for the rigorous computation of invariant
              densities and the coarse-fine strategy},
   JOURNAL = {Chaos Solitons Fractals},
  FJOURNAL = {Chaos, Solitons \& Fractals},
    VOLUME = {170},
      YEAR = {2023},
     PAGES = {Paper No. 113329, 21},
      ISSN = {0960-0779,1873-2887},
   MRCLASS = {65P99 (37-04 37M25)},
  MRNUMBER = {4560471},
       DOI = {10.1016/j.chaos.2023.113329},
       URL = {https://doi.org/10.1016/j.chaos.2023.113329},
       pdf = {https://arxiv.org/pdf/2212.05017},
}

@article {W,
    AUTHOR = {Wormell, C.L.},
     TITLE = {Spectral {Galerkin} methods for transfer operators in uniformly expanding dynamics},
   JOURNAL = {Numerische Mathematik},
    VOLUME = {14},
      YEAR = {2019},
     PAGES = {421–463}
}

@article {GHR,
    AUTHOR = {S. Galatolo and M. Hoyrup and C. Rojas},
     TITLE = { Dynamics and abstract computability: Computing invariant measures},
   JOURNAL = { Discrete and Continuous Dynamical Systems},
    VOLUME = {14},
 number = {29},
      YEAR = {2011},
     PAGES = {193-212}
}

@article {GG,
    AUTHOR = {S. Galatolo and  P. Giulietti  },
     TITLE = {Linear Response for dynamical
systems with additive noise},
   JOURNAL = {Nonlinearity},
    VOLUME = {32},
NUMBER={6},
      YEAR = {2019},
       DOI = {http://dx.doi.org/10.1088/1361-6544/ab0c2e}
}

@article {GN,
    AUTHOR = {S. Galatolo and I. Nisoli},
     TITLE = {An elementary approach to
rigorous approximation of invariant measures},
   JOURNAL = {SIAM J. Appl Dyn Sys},
    VOLUME = {13},
      YEAR = {2014},
       pages={958-985}
}

@article{GS,
    author = {Galatolo, S. and Sedro, J.},
    title = {Quadratic response of random and deterministic dynamical systems},
    journal = {Chaos: An Interdisciplinary Journal of Nonlinear Science},
    volume = {30},
    number = {2},
    pages = {023113},
    year = {2020},
    month = {02},
    abstract = {We consider the linear and quadratic higher-order terms associated with the response of the statistical properties of a dynamical system to suitable small perturbations. These terms are related to the first and second derivative of the stationary measure with respect to the changes in the system itself, expressing how the statistical properties of the system vary under the perturbation. We show a general framework in which one can obtain rigorous convergence and formulas for these two terms. The framework is flexible enough to be applied both to deterministic and random systems. We give examples of such an application computing linear and quadratic response for Arnold maps with additive noise and deterministic expanding maps.},
    issn = {1054-1500},
    doi = {10.1063/1.5122658},
    url = {https://doi.org/10.1063/1.5122658},
    eprint = {https://pubs.aip.org/aip/cha/article-pdf/doi/10.1063/1.5122658/14624072/023113\_1\_online.pdf},
}

@Article{Nisoli2022,
author={Nisoli, I.},
title={How Does Noise Induce Order?},
journal={Journal of Statistical Physics},
year={2022},
month={Nov},
day={28},
volume={190},
number={1},
pages={22},
abstract={In this paper we present a general result with an easily checkable condition that ensures a transition from chaotic regime to regular regime in random dynamical systems with additive noise. We show how this result applies to a prototypical family of nonuniformly expanding one dimensional dynamical systems, showing the main mathematical phenomenon behind Noise-induced Order.},
issn={1572-9613},
doi={10.1007/s10955-022-03041-y},
url={https://doi.org/10.1007/s10955-022-03041-y}
}

@book{Rudin1987,
  author         = {W. Rudin},
  publisher      = {McGraw-Hill International Editions},
  edition        = {Third},
  title          = {Real and Complex Analysis},
  year           = {1987}
}

@article{G2020,
  title={Existence of noise induced order, a computer aided proof},
  author={Galatolo, S. and Monge, M. and Nisoli, I.},
  journal={Nonlinearity},
  volume={33},
  number={9},
  pages={4237},
  year={2020},
  publisher={IOP Publishing}
}

@article{MT83,
  title={Noise-induced order},
  author={K. Matsumoto  and I. Tsuda},
  journal={Journal of Statistical Physics},
  volume={31},
  pages={87--106},
  year={1983},
  publisher={Springer}
}

@article{S2022,
  title={Existence of multiple noise-induced transitions in {Lasota}--{Mackey} maps},
  author={T. Chihara and Y. Sato and I. Nisoli and S. Galatolo},
  journal={Chaos: An Interdisciplinary Journal of Nonlinear Science},
  volume={32},
  number={1},
  year={2022},
  publisher={AIP Publishing}
}

@article{L2004,
  title={Invariant measures and their properties. {A} functional analytic point of view},
  author={Liverani, C. and others},
  journal={Dynamical systems. Part II},
  pages={185--237},
  year={2004}
}

@Book{SteinShakarchi2003,
  author    = {Stein, E. M. and R. Shakarchi},
  publisher = {Princeton University Press},
  title     = {Fourier Analysis},
  year      = {2003},
  address   = {New Jersey},
  edition   = {1st ed.},
  isbn      = {978-0-691-11384-5},
  note      = {Description based on publisher supplied metadata and other sources.},
  file      = {:/home/isaia/Shared/Tobeprocessed/(Princeton Lectures in Analysis, 1) Elias M. Stein, Rami Shakarchi - Fourier Analysis_ An Introduction (Princeton Lectures in Analysis, Volume I)-Princeton University Press (2003).pdf:PDF},
  pagetotal = {1330},
  ppn_gvk   = {1917809719},
  subtitle  = {An Introduction},
}

@book{V2014,
  title={Lectures on {Lyapunov} exponents},
  author={Viana, M.},
  volume={145},
  year={2014},
  publisher={Cambridge University Press}
}

@techreport{Sato2009,
  author = {Y. Sato},
  institution = {RIMS},
  title = {Noise-induced phenomena in one-dimensional maps},
  number={1688},
  pages={137--143}, 
  year = {2009}
}

@article{breden2025rigorousenclosurelyapunovexponents,
title={Rigorous enclosure of {Lyapunov} exponents of stochastic flows}, 
author={M. Breden and H. Chu and J. S. W. Lamb and M. Rasmussen},
year={2025},
journal = {arXiv preprint arXiv:2411.07064},
}

@Article{BredenEngel2023,
  author    = {Breden, M. and Engel, M.},
  journal   = {The Annals of Applied Probability},
  title     = {Computer-assisted proof of shear-induced chaos in stochastically perturbed Hopf systems},
  year      = {2023},
  issn      = {1050-5164},
  month     = apr,
  number    = {2},
  volume    = {33},
  doi       = {10.1214/22-aap1841},
  publisher = {Institute of Mathematical Statistics},
}

@article{goverse2025intermittenttwopointdynamicstransition,
title={Intermittent two-point dynamics at the transition to chaos for random circle endomorphisms}, 
author={V. P. H. Goverse and A. J. Homburg and J. S. W. Lamb},
year={2025},
journal = {arXiv preprint arXiv:2503.08244},
}

@article{chen2024noiseinducedorderhighdimensions,
      title={Noise-induced order in high dimensions}, 
      author={H. Chen and Y. Sato},
      year={2024},
      journal = {arXiv preprint arXiv:2409.02498},
}

@article{KrylovBogolyubov1937,
  author       = {Kryloff, Nicolas and Bogoliouboff, Nicolas},
  title        = {La {T}h{\'e}orie {G}{\'e}n{\'e}rale de la {M}esure dans son application {\`a} l'{\'E}tude des {S}yst{\'e}mes {D}ynamiques de la {M}{\'e}canique {N}on {L}in{\'e}aire},
  journal      = {Annals of Mathematics},
  year         = {1937},
  volume       = {38},
  number       = {1},
  pages        = {65--113},
  doi          = {10.2307/1968511},
  url          = {https://doi.org/10.2307/1968511},
  publisher    = {JSTOR},
  note         = {Accessed: 2025-12-20}
}

@book{1989DsI,
series = {Encyclopaedia of mathematical sciences 2},
publisher = {Springer},
booktitle = {Dynamical systems II ergodic theory with applications to dynamical systems and statistical mechanics},
isbn = {3-540-17001-4},
year = {1989},
title = {Dynamical systems II : ergodic theory with applications to dynamical systems and statistical mechanics / Ya.G.Sinai (ed.)},
language = {eng},
address = {Berlin New York},
lccn = {3-540-17001-4},
}

@Article{BlumenthalNisoli2022,
  author   = {Blumenthal, A. and Nisoli, I.},
  journal  = {Nonlinearity},
  title    = {Noise induced order for skew-products over a non-uniformly expanding base},
  year     = {2022},
  issn     = {0951-7715},
  number   = {10},
  pages    = {5481--5504},
  volume   = {35},
  doi      = {10.1088/1361-6544/ac87e8},
  keywords = {37H10,37H15,37E05,37D25,37C30},
  language = {English},
  zbl      = {1510.37085},
  zbmath   = {7594452},
}

@Article{B2016,
  author    = {Bahsoun, W. and Galatolo, S. and Nisoli, I. and Niu, X.},
  journal   = {Journal of Statistical Physics},
  title     = {Rigorous approximation of diffusion coefficients for expanding maps},
  year      = {2016},
  pages     = {1486--1503},
  volume    = {163},
  abstract  = {Journal of Statistical Physics, doi:10.1007/s10955-016-1523-y},
  doi       = {doi:10.1007/s10955-016-1523-y},
  file      = {:BahsounEtAl2016 - Rigorous Approximation of Diffusion Coefficients for Expanding Maps.pdf:PDF},
  keywords  = {Transfer operators; Central limit theorem; Diffusion; Ulamâ•Žs method; Rigorous computation},
  publisher = {Springer},
}

@Article{B2018,
  author    = {Bahsoun, W. and Galatolo, S. and Nisoli, I. and Niu, X.},
  journal   = {Nonlinearity},
  title     = {A rigorous computational approach to linear response},
  year      = {2018},
  issn      = {0951-7715},
  number    = {3},
  pages     = {1073},
  volume    = {31},
  doi       = {10.1088/1361-6544/aa9a88},
  file      = {:BahsounEtAl2018 - A Rigorous Computational Approach to Linear Response.pdf:PDF},
  keywords  = {37M25,37A05,37C30,37E10},
  language  = {English},
  publisher = {IOP Publishing},
  url       = {dspace.lboro.ac.uk/2134/27523},
  zbl       = {1395.37050},
  zbmath    = {6859780},
}

@article{blumenthal2025pseudospectralapproachrigorousnumerical,
      title={A pseudospectral approach to rigorous numerical estimation of {Ruelle} resonances of transfer operators}, 
      author={A. Blumenthal and I. Nisoli and T. Taylor-Crush},
      year={2025},
      eprint={2507.09021},
      archivePrefix={arXiv},
      primaryClass={math.DS},
      journal={arXiv preprint arXiv:2507.09021}, 
}

@article{LyapData,
author = {Galatolo, S. and Lopez Vereau, C. and Marangio, L. and Nisoli, I.},
publisher = {Harvard Dataverse},
title = {{Data for ``Efficient computation of stationary measures and the Lyapunov Landscape for families random dynamical systems with smooth additive noise''}},
year = {2025},
version = {DRAFT VERSION},
doi = {10.7910/DVN/ULY0WR},
journal = {Harvard Dataverse doi:10.7910/DVN/ULY0WR}
}

@article {Sumi2011,
    AUTHOR = {Sumi, Hiroki},
     TITLE = {Random complex dynamics and semigroups of holomorphic maps},
   JOURNAL = {Proc. Lond. Math. Soc. (3)},
  FJOURNAL = {Proceedings of the London Mathematical Society. Third Series},
    VOLUME = {102},
      YEAR = {2011},
    NUMBER = {1},
     PAGES = {50--112},
      ISSN = {0024-6115,1460-244X},
   MRCLASS = {37F10 (30D05 37F50 37H99)},
  MRNUMBER = {2747724},
MRREVIEWER = {Mattias\ Jonsson},
       DOI = {10.1112/plms/pdq013},
       URL = {https://doi.org/10.1112/plms/pdq013},
}

@Article{BrisebarreEtAl2020,
  author    = {Brisebarre, Nicolas and Joldeş, Mioara and Muller, Jean-Michel and Naneş, Ana-Maria and Picot, Joris},
  journal   = {ACM Transactions on Mathematical Software},
  title     = {Error Analysis of Some Operations Involved in the Cooley-Tukey Fast Fourier Transform},
  year      = {2020},
  issn      = {1557-7295},
  month     = may,
  number    = {2},
  pages     = {1--27},
  volume    = {46},
  doi       = {10.1145/3368619},
  file      = {:BrisebarreEtAl2020 - Error Analysis of Some Operations Involved in the Cooley Tukey Fast Fourier Transform.pdf:PDF},
  publisher = {Association for Computing Machinery (ACM)},
}

@Book{Higham1996,
  author    = {Nicholas J. Higham},
  publisher = {Philadelphia, PA: SIAM},
  title     = {Accuracy and Stability of Numerical Algorithms: Second Edition},
  year      = {1996},
  isbn      = {0-89871-355-2},
  doi       = {10.1137/1.9780898718027},
  file      = {:Higham1996 - Accuracy and Stability of Numerical Algorithms_ Second Edition.pdf:PDF},
  keywords  = {65Fxx,15-04,65-02,65G50},
}

@InCollection{JohnsonFrigo2008,
  author    = {Steven G. Johnson and Matteo Frigo},
  booktitle = {Fast Fourier Transforms},
  publisher = {Connexions, Rice University},
  title     = {Implementing FFTs in Practice},
  year      = {2008},
  address   = {Houston, TX},
  chapter   = {11},
  editor    = {C. Sidney Burrus},
  month     = sep,
  note      = {Connexions module m16336 (versioned online text)},
  url       = {http://cnx.org/content/m16336/1.14/},
}

@InProceedings{Lammich2019,
  author    = {Peter Lammich},
  booktitle = {10th International Conference on Interactive Theorem Proving (ITP 2019)},
  title     = {Generating Verified LLVM from Isabelle/HOL},
  year      = {2019},
  pages     = {22:1--22:19},
  publisher = {Schloss Dagstuhl--Leibniz-Zentrum f{\"u}r Informatik},
  series    = {Leibniz International Proceedings in Informatics (LIPIcs)},
  volume    = {141},
  copyright = {Creative Commons Attribution 3.0 Unported license},
  doi       = {10.4230/LIPICS.ITP.2019.22},
  journal   = {LIPIcs, Volume 141, ITP 2019},
  keywords  = {Isabelle/HOL, LLVM, Separation Logic, Verification Condition Generator, Code Generation, Theory of computation → Program verification, Theory of computation → Logic and verification, Theory of computation → Separation logic},
  language  = {en},
}

@misc{FFTWAccuracyMethod,
  author  = {{FFTW Authors}},
  title   = {FFT Accuracy Benchmark Methodology},
  howpublished = {\url{https://www.fftw.org/accuracy/method.html}},
  note    = {Accessed: 2026-02-11}
}

@misc{FFTWAccuracyComments,
  author  = {{FFTW Authors}},
  title   = {FFT Accuracy Benchmark Comments},
  howpublished = {\url{https://www.fftw.org/accuracy/comments.html}},
  note    = {Accessed: 2026-02-11}
}
\bibliographystyle{hplain}

\end{document}